\def\paragraph{\@startsection{paragraph}{4}%
  \z@\z@{-\fontdimen2\font}%
  {\normalfont\bfseries}}
\theoremstyle{plain}
\newtheorem{theorem}{Theorem}[section]
\newtheorem{prop}[theorem]{Proposition}
\newtheorem{lemma}[theorem]{Lemma}
\newtheorem{mytheorem}{Theorem}
\theoremstyle{definition}
\newtheorem{definition}[theorem]{Definition}
\theoremstyle{remark}
\newtheorem{remark}[theorem]{Remark}
\numberwithin{equation}{section}
\DeclareMathOperator{\dist}{dist}
\DeclareMathOperator{\supp}{supp}
\DeclareFontFamily{OT1}{pzc}{}
\DeclareFontShape{OT1}{pzc}{m}{it}{<-> s * [1.40] pzcmi7t}{}
\DeclareMathAlphabet{\mathpzc}{OT1}{pzc}{m}{it}
\newcommand{\Wc}{\mathcal{W}}
\newcommand{\Vsc}{\mathscr{V}}
\newcommand{\Vscs}{\mathscr{V}_{\rm s}}
\newcommand{\Ss}{\mathcal{S}}
\renewcommand{\Mc}{\ensuremath{\mathcal{M}}}
\newcommand{\R}{\mathbb{R}}
\newcommand{\Rc}{\ensuremath{\mathcal{R}}}
\newcommand{\Hcc}{\ensuremath{\dot{\mathscr{H}}^1}}
\newcommand{\La}{\Lambda}
\newcommand{\Z}{\mathbb{Z}}
\newcommand{\C}{\mathbb{C}}
\newcommand{\N}{\mathbb{N}}
\definecolor{fgreen}{HTML}{2ECC40}
\newcommand{\corr}{}
\newcommand{\<}[2]{\ensuremath{\langle #1,\,#2\rangle}}
\newcommand{\E}{\mathcal{E}}
\newcommand{\B}{\mathcal{B}}
\newcommand{\G}{\mathcal{G}}
\def\subsubsection{\@startsection{subsubsection}{3}%
  \z@{.5\linespacing\@plus.7\linespacing}{-.5em}%
  {\normalfont\bfseries}}
\begin{document}

\title{Atomistic modelling of near-crack-tip plasticity}


\author{Maciej Buze}
\address{School of Mathematics\\
  Cardiff University\\
  Senghennydd Road\\
  Cardiff\\
  CF24 4AG\\
  United Kingdom}
\email[M.~Buze]{buzem@cardiff.ac.uk}

\thanks{This work was supported by EPSRC, grant no. EP/S028870/1.}

\subjclass[2010]{70C20,70G55,74A45,74G25,74G65}

\keywords{crystal lattices, defects, fracture, near-crack-tip plasticity, anti-plane shear}

\date{\today}

\dedicatory{}

\begin{abstract}
An atomistic model of near-crack-tip plasticity on a square lattice under anti-plane shear kinematics is formulated and studied. The model is based upon a new geometric and functional framework of {\it a lattice manifold complex}, which ensures that the crack surface is fully taken into account, while preserving the crucial notion of duality. As a result, existence of locally stable equilibrium configurations containing both a crack opening and dislocations is established. Notably, with the boundary in the form of a crack surface accounted for, no minimum separation between a dislocation core and the crack surface or the crack tip is required.

The work presented here constitutes a foundation for several further studies aiming to put the phenomenon of near-crack-tip plasticity on a rigorous footing. 
\end{abstract}
\maketitle



\section{Introduction}\label{sec:intro}
In a cracked crystalline body, the term {\it near-crack-tip plasticity} refers to the phenomenon of atoms rearranging themselves in the vicinity of the crack tip due to stresses accumulated therein \cite{SJ12}. This rearrangement most prominently comes in the form of topological defects known as dislocations, which are carriers of plastic (irreversible) deformation \cite{hirth-lothe}. With such deformations around the crack tip potentially having the capacity to shield the material from further crack propagation \cite{majumdar1981crack,majumdar1983griffith,ZYLS10}, a proper understanding of mechanisms involved in near-crack-tip plasticity is of paramount importance in the context of structural integrity of materials used for engineering purposes \cite{SDIM06}.

With cracks and dislocations initiating and propagating via primarily atomistic mechanisms \cite{hirth-lothe,bulatov2006computer,shimada2015breakdown,Bitzek2015}, the modelling of near-crack-tip plasticity should ideally take the atomistic scale correctly into account. This poses a major challenge, as the inherently nonlinear nature of interactions between atoms renders the interplay between defects in a crystal a highly complex phenomenon. In particular, there currently does not exist a mathematical theory providing a bespoke theoretical underpinning for the many computational models employed in practice, such as \cite{Machov__2009,CHENG20123345,YAMAKOV201435,RAJAN201618,berton2019atomistic}.

The primary reason for the current lack of such a theory is the sheer complexity of the processes involved and the intrinsically multiscale nature of the problem. On a physical level, it is, for instance, known that in certain regimes dislocations can be {\it emitted} from the crack tip \cite{Rice1992,Bitzek2015}, while there are also experimentally verified regimes in which {\it plastic-free zones} exist just ahead of the crack tip \cite{horton1982tem}, meaning that the dislocations can be effectively pushed away from the crack tip.

On a mathematical level, while there is a wealth of recent work about atomistic modelling of materials and in particular atomistic approaches to defects in crystals (c.f., among many other, \cite{BDMG99,AO05,BLO06,P07,BC07,E-Ming,ADLGP14,2013-disl,EOS2016}), many of the mathematical techniques employed rely at least in part on exploiting crystal symmetries. This renders most of them fundamentally inadequate for the study of near-crack-tip plasticity, as the crack breaks the translational symmetry, meaning that the domain under study is discrete and spatially inhomogeneous. Such a setup has so far only been considered in the context of atomistic crack propagation in \cite{2018-antiplanecrack,2019-antiplanecrack} with the crystalline material considered there implicitly assumed to be perfectly brittle, meaning no plastic deformation occurring around the crack tip was studied. To the best of author's knowledge, there currently does not exist any mathematical literature concerning an atomistic approach to modelling near-crack-tip plasticity.

This paper concerns developing a mathematical framework describing atomistic near-crack-tip plasticity in the setup of a square lattice under anti-plane shear kinematics, with atoms  assumed to interact via a nearest neighbour pair potential and the crack for simplicity is assumed to be stationary, with interactions across the crack surface disregarded. The framework is based upon the theory of discrete dislocations in crystals developed in \cite{AO05} and expanded in the case of screw dislocations in \cite{H17}, and introduces the idea of {\it a lattice manifold complex}, which formalises the concept of a discrete complex square root manifold employed in \cite{2018-antiplanecrack}.

The lattice manifold complex entails a geometric and functional setup in which a variational approach can be employed to prove existence and provide characterisation of {\it near-crack-tip plasticity} equilibrium configurations -- locally stable equilibria in which both a crack opening and dislocations are present. In the process the duality of the lattice manifold complex is exploited, which naturally leads to discussing a lattice Green's function with {\it zero Dirichlet boundary condition} along the crack surface. Such a Green's function is given a careful characterisation, which is a result also of independent interest. 

Notably, with the present formulation fully accounting for the boundary of the domain in the form of crack surface, it is not necessary to impose a common restriction in the form of a minimum separation distance between dislocations and the boundary, thus going beyond the regime considered in \cite{H17}.   With a cracked crystal representing an extreme case of a non-convex domain, the approach presented further paves the way for treating general non-convex domains, thus further extending the results in \cite{H17}.

This paper lays foundation for future study of atomistic near-crack-tip plasticity in a variety of contexts, including an on-going study \cite{BvM} aiming to rigorously address the upscaling of the model and the existence of a plastic-free zone in a mesoscopic description, which will provide a mathematical underpinning for the transmission electron microscopy (TEM) experiments in \cite{horton1982tem}. A separate clear future direction is to extend the framework to the case of a moving crack tip, thus allowing a rigorous study of the influence dislocations exert on crack propagation.

\subsection{ {\corr Outline of the main results}}\label{sec:out-res}
\begin{figure}[!htbp]
  \begin{subfigure}[t]{.48\textwidth}
    \centering
    \includegraphics[width=\linewidth]{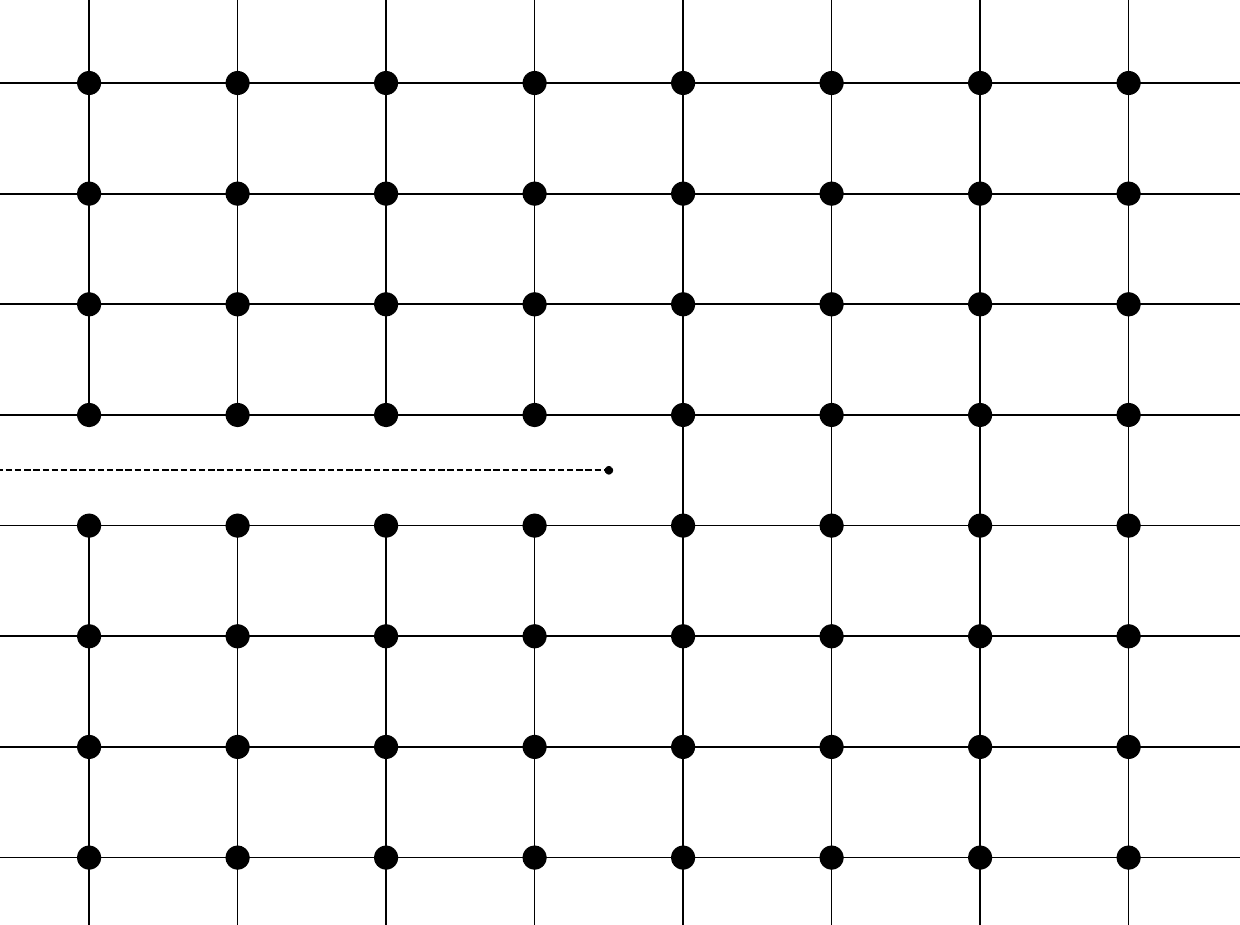}
    \caption{$\,$}\label{fig::intro1a}
  \end{subfigure}
	\quad
  \begin{subfigure}[t]{.48\textwidth}
    \centering
    \includegraphics[width=\linewidth]{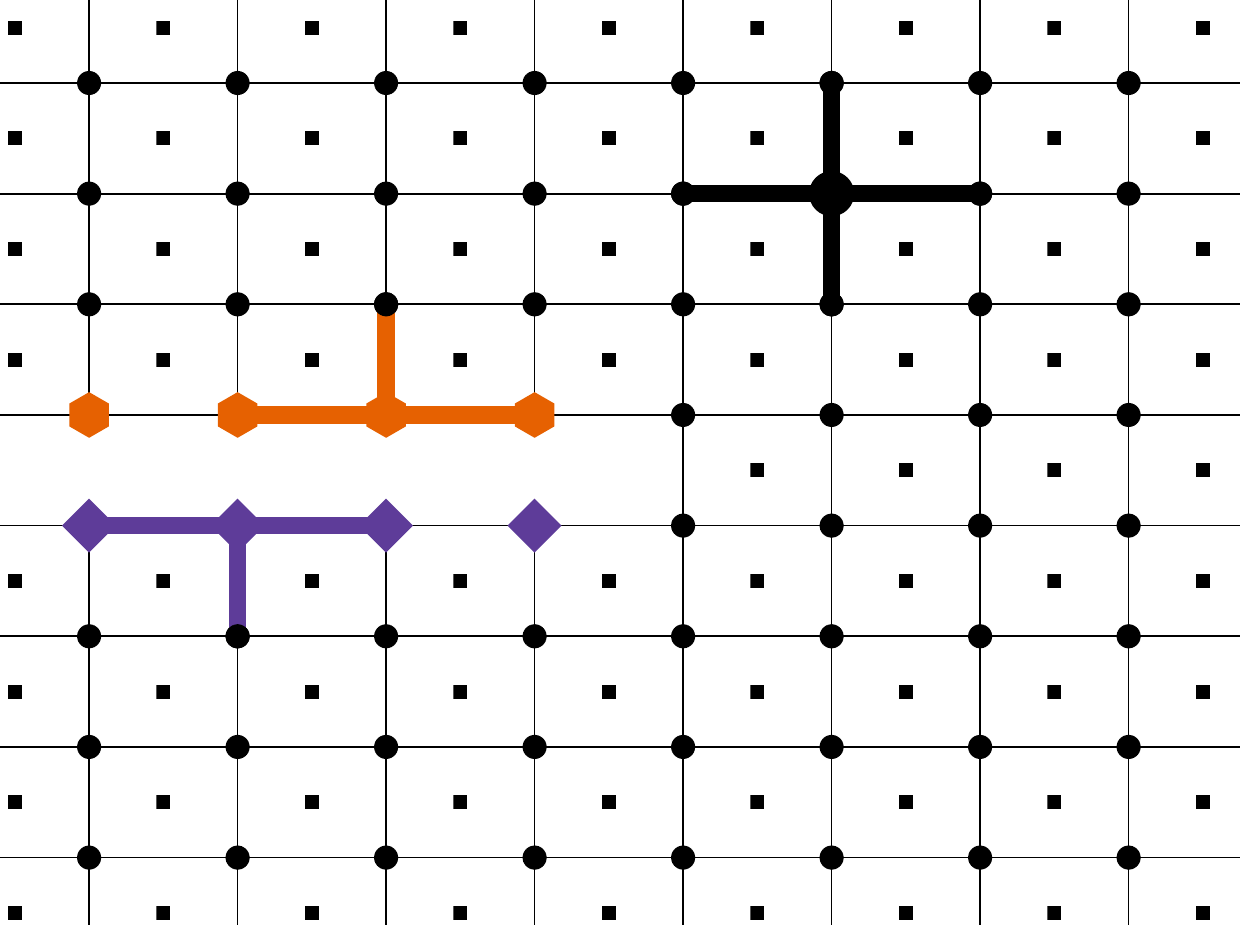}
    \caption{$\,$}\label{fig::intro1b}
  \end{subfigure}
  \captionsetup{width=0.95\linewidth}
  \caption{\corr A cracked crystal material $\Lambda$. The dotted line in \subref{fig::intro1a} is the crack surface $\Gamma_0$ along which the interactions are erased. In \subref{fig::intro1b} purple diamonds are atoms in $\Gamma_-$ and orange hexagons are atoms in $\Gamma_+$. A set of interactions of representative atoms are highlighted. The small squares in \subref{fig::intro1b} represent the dual lattice $\Lambda^*$. The erased bonds across the crack surface prevent the basic notion of duality from being applicable at the crack surface $\Gamma$.}
\label{fig:intro1}
\end{figure}
{\corr The focus of the paper is on a crystalline material modelled as a shifted square lattice $\Lambda = \Z^2 - \left(\tfrac{1}{2},\tfrac{1}{2}\right)$ subject to anti-plane displacements ${y\,\colon\,\Lambda \to \R}$ and under a nearest-neighbour interaction law between the atoms. The material is assumed to be cracked with the crack surface $\Gamma = \Gamma_+ \cup \Gamma_-$ where ${\Gamma_{\pm} = \{ (l_1,l_2) \in \Lambda\,\mid\,l_1 < 0,\; l_2 = \pm \frac{1}{2}\}}$. This is encoded in the setup by modifying the set of interactions for atoms at the crack surface, namely
\[
\Rc(l) = \begin{cases} \{e_1, -e_1, e_2,-e_2\}\quad&\text{ if } l \not\in\Gamma,\\
\{e_1, - e_1, \pm e_2\}\quad&\text{ if } l \in \Gamma_{\pm}, \end{cases} 
\]
where $e_1 = (1,0), e_2 = (0,1)$ form the canonical basis of $\R^2$. Figure~\ref{fig:intro1} provides some visual intuition of this setup. 

The atomistic energy considered is, formally (subject to an appropriate renormalisation, see Section~\ref{sec:atom_model}), of the form
\[
E(y) = \sum_{l \in \Lambda} \sum_{\rho \in \Rc(l)} \psi\big( y(l+\rho) - y(l)\big),
\]
where  $\psi(x) = \frac{\lambda}{2}\dist(x,\Z)^2$ for some $\lambda > 0$. The periodicity of $\psi$ encodes the fact that $\Lambda$ is to be considered a top-down view of a three-dimensional crystal and each $m \in \Lambda$ in fact represents a column of atoms. 

The phenomenon of near-crack-tip plasticity is discussed in terms of considering $m$ screw dislocations with cores at $\{x_1,\dots,x_m\} \subset \Lambda^*$, where $\Lambda^* = \Z^2$ is the dual to $\Lambda$, with each $l^* \in \Lambda^*$ identified with a square (a closed loop of four atoms) in $\Lambda$. The problem of preserving duality near the crack surface $\Gamma$ will be introduced in Section~\ref{sec:int-ideas}. The full details of the atomistic model are presented in Section~\ref{sec:atom_model}.

The first main result is as follows (see Section~\ref{sec:existence_k0} for the full treatment).
\begin{mytheorem}[Theorem~\ref{thm:K0}]\label{thm:A}
There exists a locally stable equilibrium (notion made precise in Definition~\ref{def:stability}) configuration $y_{\rm s} \,\colon\,\Lambda \to \R$ containing $m$ dislocations, provided that cores satisfy the minimum separation property (see Figure~\ref{fig:intro2} for some visual intuition and the notion is made precise in Section~\ref{sec:dis_conf}). Notably, no such separation condition between dislocation cores and the crack surface $\Gamma$ is needed.
\end{mytheorem}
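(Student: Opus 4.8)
The plan is to realise the sought configuration as a small nonlinear correction of an explicit \emph{predictor} that already carries the correct dislocation topology, and then to solve for the corrector by a fixed-point argument built on the invertibility of the linearised energy. Since the total energy $E$ is only formally defined, I would work throughout with the renormalised energy difference relative to the predictor (as set up in Section~\ref{sec:atom_model}), whose first variation is a well-defined element of the dual of $\Hcc$ and whose critical points are precisely the equilibria in the sense of Definition~\ref{def:stability}.

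First I would construct the predictor $y_0 \colon \Lambda \to \R$ encoding the $m$ dislocations with cores at $x_1,\dots,x_m$. Using the lattice Green's function $G$ with zero Dirichlet boundary condition along the crack surface $\Gamma$, I would set $y_0$ to be the discrete-harmonic superposition of the $m$ screw-dislocation fields adapted to the cracked geometry, i.e.\ the solution of the \emph{linearised} equilibrium problem carrying the prescribed circulation around each core on the dual lattice manifold complex. The decisive point is that, because $G$ satisfies the zero Dirichlet condition, $y_0$ is automatically compatible with the erased bonds across $\Gamma$ -- no compatibility has to be imposed by hand, and in particular the cores may sit arbitrarily close to $\Gamma$. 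Since $\psi(x)=\tfrac{\lambda}{2}\dist(x,\Z)^2$ is exactly quadratic on each elastic branch $(-\tfrac12,\tfrac12)+k$, the discrete-harmonic $y_0$ annihilates the elastic first variation, so the residual $\delta E(y_0)$ stems solely from the bonds whose strain approaches a half-integer, concentrated near the cores. The minimum separation property between cores then guarantees, via the quantitative decay of $G$, that the predictor gradient stays below the $\tfrac12$ threshold away from the cores, so that $\|\delta E(y_0)\|$ in the dual norm is small.

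Next I would linearise: writing $y=y_0+u$ with $u\in\Hcc$, the equilibrium condition $\delta E(y_0+u)=0$ splits as $L u = -\delta E(y_0) + N(u)$, where $L=\delta^2E(y_0)$ is the linearised operator and $N$ collects the higher-order terms arising from branch switching. On each elastic branch $L$ reduces to $\lambda$ times the discrete Laplacian on the lattice manifold complex; exploiting the duality of the complex together with a discrete Poincar\'e-type inequality, I would show that $L$ is bounded, coercive and hence boundedly invertible on $\Hcc$, uniformly in the core positions. A quantitative inverse function theorem -- equivalently, a contraction mapping applied to $u\mapsto -L^{-1}\delta E(y_0)+L^{-1}N(u)$ -- then yields a unique small corrector $u$, giving the equilibrium $y_{\rm s}=y_0+u$. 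Local stability follows because $\delta^2 E(y_{\rm s})$ is a small perturbation of the coercive operator $L$ and therefore remains positive definite on $\Hcc$, which is exactly the requirement of Definition~\ref{def:stability}.

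The hard part will be obtaining all of these estimates \emph{uniformly as a core approaches the crack surface or the crack tip}, since this is precisely the regime excluded in earlier work and the reason the minimum separation from $\Gamma$ can be dropped. The potential danger is that the dislocation field and the intrinsic crack-tip singularity reinforce one another, spoiling either the smallness of the residual $\delta E(y_0)$ or the coercivity constant of $L$. I expect this to be controlled entirely by the fine characterisation of the Dirichlet lattice Green's function: the zero Dirichlet condition removes any spurious boundary singularity, leaving only the crack-tip behaviour of $G$, whose discrete gradient remains square-summable on the complex, so that the near-tip contributions stay finite and do not degrade the fixed-point argument.
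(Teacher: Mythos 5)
Your predictor is essentially the paper's construction: the paper also builds the equilibrium strain field explicitly as $\alpha^* = \pmb{d}^*\G_{\mu^*}$, a superposition of dual-lattice Green's functions with zero Dirichlet condition along the crack, and that Dirichlet condition is indeed what lets cores sit arbitrarily close to $\Gamma$. But the corrector/fixed-point layer you bolt on is solving a problem that does not exist. Because $\psi$ is exactly quadratic on every branch with the same curvature $\lambda$, once the bond-length $1$-form $\alpha$ satisfies $\|\alpha\|_\infty < \tfrac12$ one has $\psi'(\pmb{d}y_\mu)=\lambda\alpha$ on \emph{every} bond, so the divergence-free property $\pmb{\delta}\alpha=0$ already makes the explicit construction an \emph{exact} critical point, and discrete ellipticity gives local stability for free. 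There is no residual ``concentrated near the cores'' to correct; your map $u\mapsto -L^{-1}\delta E(y_0)+L^{-1}N(u)$ would simply return $u=0$.

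The genuine gap is that the entire difficulty of the theorem is buried in your sentence that the predictor gradient ``stays below the $\tfrac12$ threshold away from the cores.'' The bound must hold \emph{at} the cores (on the bonds bounding the core $2$-cell) and on the bonds adjacent to the crack surface, and there it is borderline: for the full-lattice Green's function the maximal nearest-neighbour difference is exactly $\tfrac14$, attained at the source, and the Dirichlet boundary correction can contribute almost another $\tfrac14$ when a core sits next to $\Gamma$. The paper's proof of the required estimate (Theorem~\ref{thm:G}, bound \eqref{thm:G-2}) decomposes $G=G^{\rm h}+G^{\rm c}$, uses the explicitly known values of $G^{\rm h}$ near the source, and runs a discrete maximum principle on the harmonic correction $G^{\rm c}$ to obtain the strict margin $\tfrac12-\epsilon$, with constants such as $\tfrac1\pi-\tfrac14$ and $\tfrac34-\tfrac2\pi$ entering the computation. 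Decay of $G$ and square-summability of its discrete gradient, which is all you invoke, say nothing about this pointwise near-core, near-boundary bound; without it the construction fails outright, since a bond whose strain reaches a half-integer cannot belong to a locally stable equilibrium. The minimum separation between cores is then used only to make the contributions of the \emph{other} $m-1$ cores small enough, pointwise, that the sum stays strictly below $\tfrac12$ --- again a supremum argument, not an $\ell^2$ one.
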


The construction of $y_{\rm s}$ is explicit and uses the well-known duality between a point source on the dual lattice and a dislocation core on the original lattice \cite{AO05,H17}. Crucially, a homogeneous Neumann boundary condition related to the crack surface $\Gamma$ on the original lattice becomes a homogeneous Dirichlet boundary condition on the dual. The following is proven (full details are presented in Section~\ref{sec:G}), which is of independent of interest.

\begin{figure}[!htbp]
  \begin{subfigure}[t]{.48\textwidth}
    \centering
    \includegraphics[width=\linewidth]{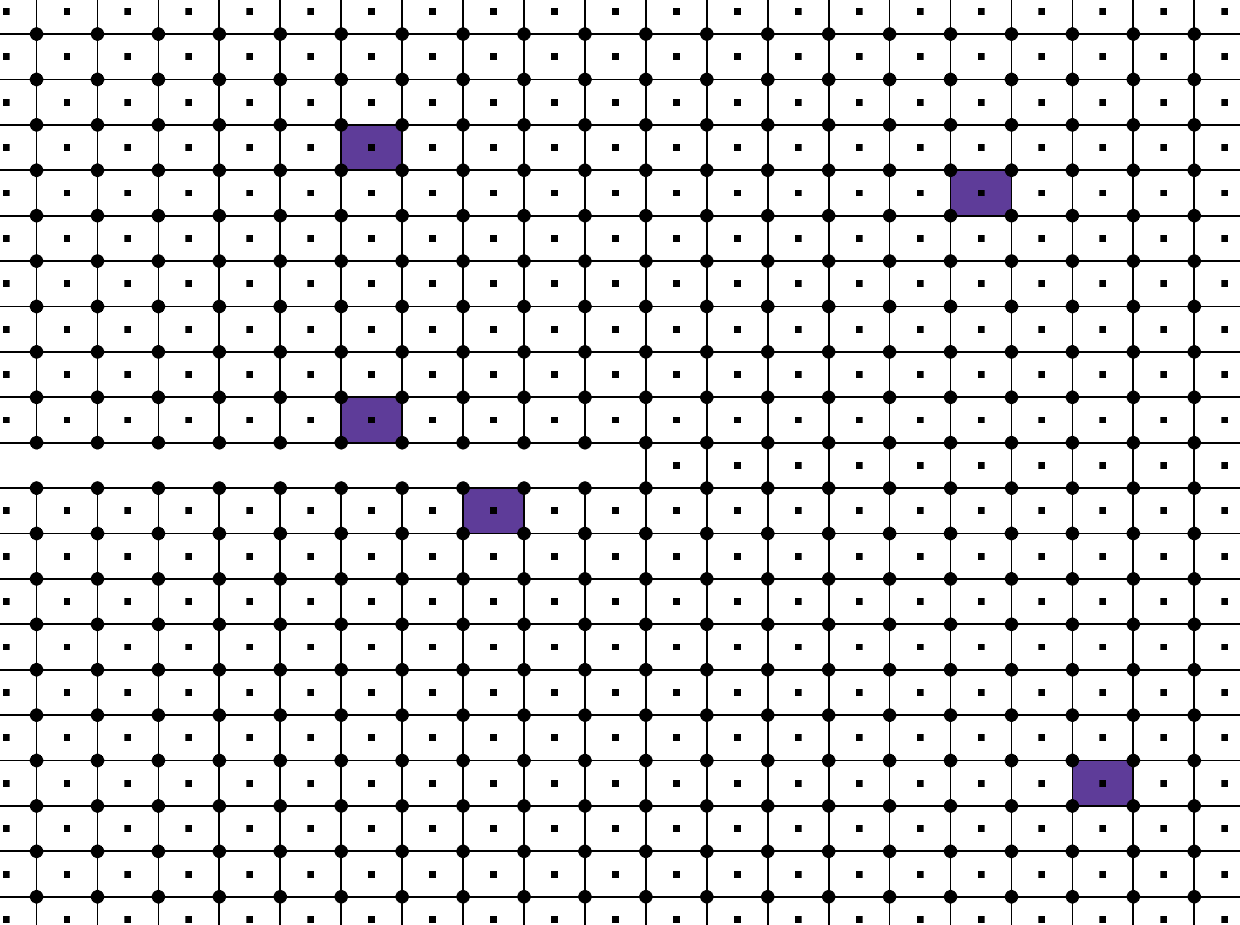}
    \caption{$\,$}\label{fig::intro2a}
  \end{subfigure}
	\quad
  \begin{subfigure}[t]{.48\textwidth}
    \centering
    \includegraphics[width=\linewidth]{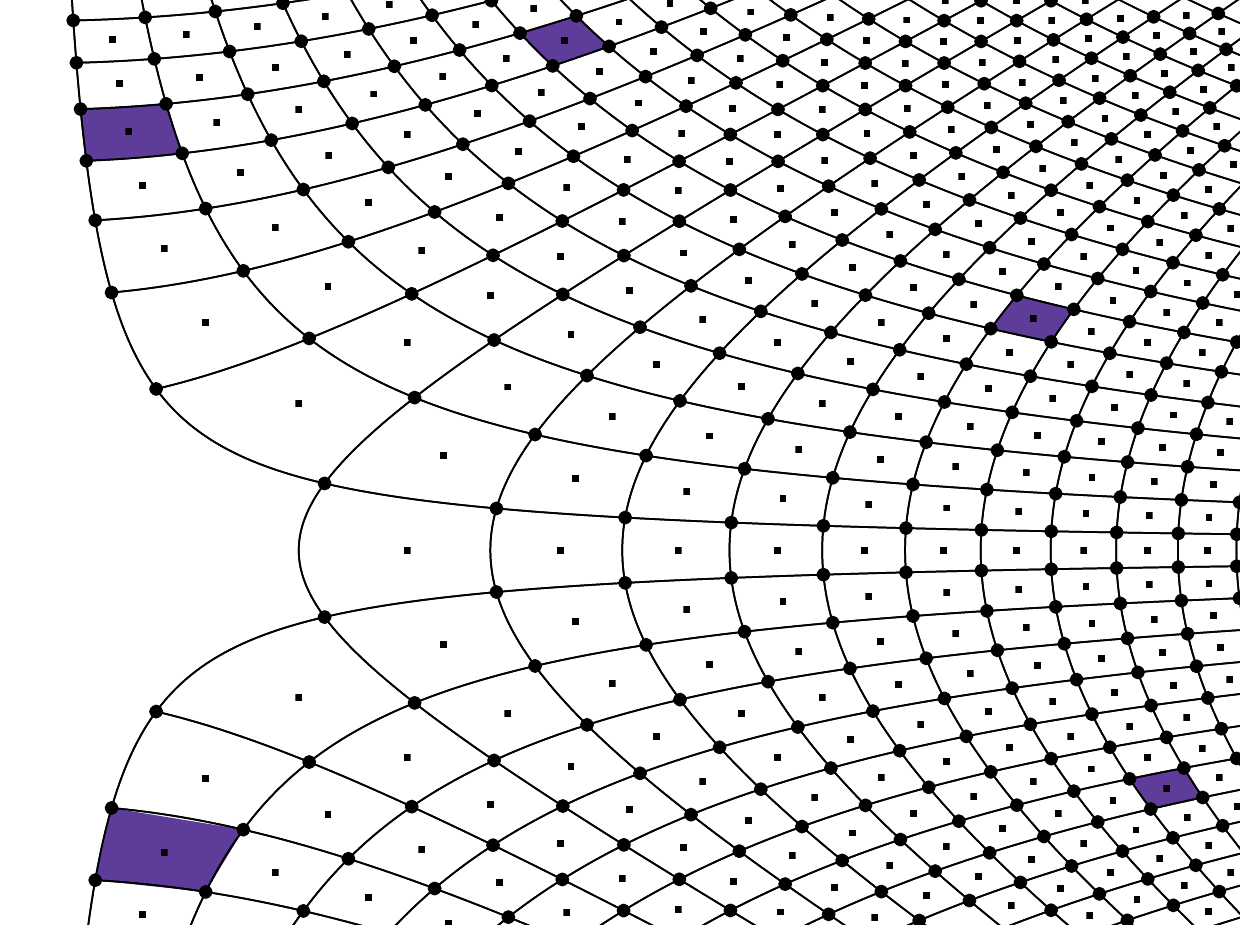}
    \caption{$\,$}\label{fig::intro2b}
  \end{subfigure}
  \captionsetup{width=0.95\linewidth}
  \caption{\corr A cracked crystalline material $\Lambda$ with $5$ possible dislocations cores highlighted in purple. In \subref{fig::intro2a} the original representation is used, whereas in \subref{fig::intro2b} the cracked lattice is mapped onto a distorted half-space lattice via the complex square root mapping. Note that while two dislocation cores near the crack surface are close to one another in the original description,  they are in fact significantly separated in the other description. }
\label{fig:intro2}
\end{figure}

\begin{mytheorem}[Theorem~\ref{thm:G}]\label{thm:B}
Let $\Gamma^* = \{(l_1,l_2) \in \Lambda^*\,\mid\, l_1 \leq 0,\; l_2 = 0\}$. There exists a Green's function $G\,:\,\La^*\times \La^* \to \R$ which satisfies 
\begin{subequations}\label{G-normal-eqns}
\begin{align}
-\Delta_{\rm d} G(l,s) &= \delta(l,s),\quad&&\text{ if } l \not\in\Gamma^*,\,s\not\in\Gamma^*,\label{G-normal-eqn1}\\
G(l,s) &= 0,\quad&&\text{ if } l \in \Gamma^*,\label{G-normal-eqn2}\\
G(l,s) &= G(s,l),\quad&&\text{ for all } l,s \in \La^*, \label{G-normal-eqn3}
\end{align}
\end{subequations}
where $\delta$ is the Kronecker delta and $-\Delta_{\rm d} G(l,s) = \sum_{\rho \in \Rc}\left(G(l,s) - G(l-\rho,s)\right)$ is the usual definition of a discrete Laplacian, with
\begin{equation}\label{Rc}
\Rc = \{ e_1,-e_1,e_2,-e_2\}.
\end{equation} 

In particular, for $l,s \in \Lambda^*$ and $\rho \in \Rc$, it satisfies
\[
|G(l+\rho,s) - G(l,s)| \leq  \frac{C}{(1+(1+|\omega(l)|)(|\omega(l)-\omega(s)|))},
\]
for some $C>0$, where $\omega$ is the complex square root mapping (defined explicitly in Section~\ref{sec:latmancom}). Finally, there exists $\epsilon > 0$ such that, for any $s \in \Lambda^*$, $\max_{l \in \La^*,\rho \in \Rc}|G(l+\rho,s)-G(l,s)| < \frac{1}{2} - \epsilon$.
\end{mytheorem}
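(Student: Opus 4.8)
The plan is to transport the whole problem through the complex square-root map $\omega$, which sends the slit lattice $\La^* \setminus \Gamma^*$ to a discrete half-plane and thereby converts the Dirichlet condition \eqref{G-normal-eqn2} along the half-line $\Gamma^*$ into a Dirichlet condition along a straight boundary, where the method of images is available. First I would settle existence together with \eqref{G-normal-eqn1}--\eqref{G-normal-eqn3} by a probabilistic construction that makes no reference to $\omega$: let $(X_n)$ be the simple random walk on $\La^* = \Z^2$ with transition operator $P$ (so that $-\Delta_{\rm d} = 4(I-P)$), let $\tau$ be its first hitting time of $\Gamma^*$, let $\sigma_s$ be the first hitting time of $s$ and $\sigma_s^+$ the first return to $s$, and put
\[
G(l,s) \;=\; \tfrac14\,\mathbb{E}_l\big[\#\{0\le n<\tau : X_n = s\}\big].
\]
Decomposing the visits to $s$ into excursions gives $G(s,s) = \tfrac14\,\mathbb{P}_s(\tau<\sigma_s^+)^{-1}$, which is finite because from any site there is a finite path to $\Gamma^*$ avoiding a prescribed point; hence $G(l,s) = \mathbb{P}_l(\sigma_s<\tau)\,G(s,s)$ is finite for every pair. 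A one-step analysis at sites $l\notin\Gamma^*$ yields \eqref{G-normal-eqn1} with the correct normalisation, absorption on $\Gamma^*$ gives \eqref{G-normal-eqn2}, and path reversal (reversibility of $P$ with respect to counting measure) gives the symmetry \eqref{G-normal-eqn3}.

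The heart of the theorem is the gradient bound, and here the plan is to compare $G$ with the continuum half-plane Green's function $\mathcal{G}$ through the factorisation $G(l,s)\approx \mathcal{G}(\omega(l),\omega(s))$, where $\mathcal{G}(\zeta,\eta) = -\tfrac{1}{2\pi}\log|\zeta-\eta| + \tfrac{1}{2\pi}\log|\zeta+\bar\eta|$ is built by images. Away from the source and from $\Gamma^*$ the map $l\mapsto G(l,s)$ is discrete harmonic, so interior discrete gradient/Harnack estimates (equivalently, a local $C^1$-type comparison of discrete harmonic functions with their continuum counterparts) control the finite differences. The decisive geometric input is the chain rule for $\omega$: since $\omega'(z) = (2\omega(z))^{-1}$, one lattice step in the $l$-variable corresponds to an $\omega$-step of size $\asymp |\omega(l)|^{-1}$, whence
\[
|G(l+\rho,s)-G(l,s)| \;\lesssim\; \big|\nabla_\zeta\mathcal{G}(\omega(l),\omega(s))\big|\,|\omega'(l)| \;\lesssim\; \frac{1}{|\omega(l)-\omega(s)|}\cdot\frac{1}{|\omega(l)|}.
\]
Inserting the two ``$1+$'' regularisations then covers the near-field $|\omega(l)-\omega(s)|\to0$ and the crack-tip $|\omega(l)|\to0$ regimes, where the continuum quantities are singular but the discrete ones stay $O(1)$, and reproduces the stated bound. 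The technical effort is to make the discrete-to-continuum comparison uniform up to and along the boundary, where $\omega$ degenerates; I would treat a fixed neighbourhood of the tip by a direct estimate on the killed-walk Green's function rather than through $\omega$.

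Finally I would deduce $\max_{l,\rho}|G(l+\rho,s)-G(l,s)|<\tfrac12-\epsilon$ by splitting on the position of $l$ relative to $s$. Outside a fixed $\omega$-window around the source and the tip, the gradient estimate already decays to $0$ and so lies below any fixed threshold. Inside, only finitely many bond-differences remain, and from $G(l,s)=\mathbb{P}_l(\sigma_s<\tau)G(s,s)$ the extremal one occurs at a bond incident to $s$ whose far endpoint lies in $\Gamma^*$, where its value is exactly $G(s,s)$. The problem thus reduces to the uniform on-diagonal bound $G(s,s)<\tfrac12-\epsilon$ for sources $s$ adjacent to $\Gamma^*$, which I would obtain from a lower bound on $\mathbb{P}_s(\tau<\sigma_s^+)$ --- the probability of absorption before return --- quantified by comparison with the half-plane, in which the Dirichlet Green's function vanishes linearly at the boundary. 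I expect this last step to be the main obstacle: pinning the on-diagonal value strictly below $\tfrac12$ uniformly in $s$, including for sources sitting next to the crack tip where the geometry is most singular, is what ultimately guarantees that the relevant arguments of $\psi$ stay in the half-period on which it is quadratic, so the threshold $\tfrac12$ is sharp and cannot be relaxed.
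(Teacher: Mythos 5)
Your probabilistic construction of $G$ as $\tfrac14$ times the expected occupation time of the simple random walk killed on $\Gamma^*$ is a valid and genuinely more elementary route to existence and to \eqref{G-normal-eqn1}--\eqref{G-normal-eqn3}: the paper instead builds $G$ variationally, as a continuum predictor plus an $\dot{\mathscr{H}}^1$ corrector obtained from Lax--Milgram on the dual lattice manifold complex, and has to argue separately that the two correctors agree in order to get the symmetry \eqref{G-normal-eqn3}, which your path-reversal argument gives for free. Your image function $\mathcal G(\zeta,\eta)$ in $\omega$-coordinates is exactly the paper's continuum predictor $\hat G(x,s)=-\tfrac1{2\pi}\bigl(\log|\omega(x)-\omega(s)|-\log|\omega(x)-\omega(s)'|\bigr)$, so the decay heuristic behind \eqref{thm:G-1} is the right one; the paper controls the discrete correction by an energy-space bootstrap rather than by discrete-harmonic interior regularity, but your route could plausibly be made to work, modulo the boundary uniformity you yourself flag.

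The genuine gap is in the supremum bound \eqref{thm:G-2}. Your reduction rests on $0\le G(l,s)\le G(s,s)$, so that every bond difference is $\le G(s,s)$ with equality only on a bond joining $s$ to $\Gamma^*$, and you conclude that it suffices to prove $G(s,s)<\tfrac12-\epsilon$ for $s$ adjacent to $\Gamma^*$. But $G(s,s)=\tfrac14\,\mathbb{P}_s(\tau<\sigma_s^+)^{-1}$ grows without bound (logarithmically in the distance from $s$ to the half-line, by recurrence of the planar walk), so the bound ``$\le G(s,s)$'' is vacuous for a generic source, and for such a source no bond attains it. The bonds you actually must control are the four incident to an \emph{arbitrary} source: summing \eqref{G-normal-eqn1} at $l=s$ gives $\sum_{\rho\in\Rc}\bigl(G(s,s)-G(s+\rho,s)\bigr)=1$ with every term nonnegative, so at least one neighbour difference is $\ge\tfrac14$ for \emph{every} $s$, and nothing in your argument prevents one of them from exceeding $\tfrac12$ once the boundary breaks the four-fold symmetry; the far-field gradient estimate only gives $\lesssim 1$ with an unquantified constant at such bonds. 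This is precisely where the paper works hardest: it decomposes $G=G^{\rm h}+G^{\rm c}$ with $G^{\rm h}$ the full-lattice Green's function, whose extremal bond difference is exactly $\tfrac14$, shows via a discrete maximum principle that the bond differences of the harmonic correction $G^{\rm c}$ are controlled by its values on the crack, and evaluates those using explicit lattice Green's function constants ($\tfrac1\pi-\tfrac14$ and $\tfrac34-\tfrac2\pi$), with a separate harmonicity argument for bonds entering the crack. Some quantitative input of this kind --- explicit lattice constants, or a discrete-to-continuum comparison sharp enough to beat $\tfrac14$ on the correction --- is indispensable and is missing from your plan.
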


The equilibrium configuration $y_{\rm s}$ from Theorem~\ref{thm:A} treats the crack surface $\Gamma$ merely as a geometric constraint in the form of a boundary and no actual crack opening is present in $y_{\rm s}$. The question of extending this construction to the case when both $m$ dislocations and a crack opening are present is addressed in the final result (c.f. Section \ref{sec:existence_Knot0}), which is achieved courtesy of the Implicit Function Theorem.
\begin{mytheorem}[Theorem~\ref{thm:Knot0}]\label{thm:C}
There exists a locally stable equilibrium configuration \linebreak ${y\,\colon\,\Lambda \to \R}$ which contains $m$ dislocations with the separation property as in Theorem~\ref{thm:A}, as well as a crack opening, in the sense that 
\[
\lim_{k \to \infty} |y(x_k^+) - y(x_k^-)| = \infty,
\]
where $x_k^{\pm} = (-k-\frac{1}{2},\pm\frac{1}{2})$. In particular, the dominant behaviour of this difference at infinity is characterised by a term $2K\sqrt{|x^+_k|}$ where $K > 0$ is small enough. This constant is effectively a measure of 'the strength' of the opening and is referred to in literature as the stress intensity factor.
\end{mytheorem}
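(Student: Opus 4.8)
The plan is to construct the sought configuration as a perturbation of the dislocated equilibrium $y_{\rm s}$ from Theorem~\ref{thm:K0} via the Implicit Function Theorem, using the stress intensity factor $K$ as the continuation parameter. The starting observation is that, under the complex square root mapping $\omega$, the singular anti-plane crack field $\sqrt{r}\,\sin(\theta/2)$ becomes the affine field $\Imo(\omega(\cdot))$ on the (distorted) half-space lattice; I would therefore take as a crack-opening predictor $\hat u_K := K\,\Imo\circ\omega$, which by construction produces a jump $\hat u_K(x_k^+) - \hat u_K(x_k^-) = 2K\sqrt{|x_k^+|}\,(1 + o(1))$ across the two crack faces and whose bond strains $\hat u_K(l+\rho) - \hat u_K(l)$ decay like $K|\omega(l)|^{-1}$ away from the tip. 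The key preliminary step is a consistency estimate: since $\Imo\circ\omega$ is affine in the $\omega$-plane, the residual force $\delta E(y_{\rm s} + \hat u_K)$ is supported essentially near the crack tip and along $\Gamma$, and is controlled, in the dual norm of $\Hcc$, by $CK$ with Lipschitz dependence on $K$. This is where the decay estimates of Theorem~\ref{thm:G} (and the control on the distortion of $\omega$) enter, in complete analogy with the construction of $y_{\rm s}$.

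With the predictor in hand, I would seek the equilibrium in the form $y_K = y_{\rm s} + \hat u_K + w$ with a finite-energy corrector $w \in \Hcc$, and define the root-finding map
\[
\mathcal{G}\,\colon\, (-K_0,K_0)\times \Hcc \to (\Hcc)^*, \qquad \mathcal{G}(K,w) := \delta E(y_{\rm s} + \hat u_K + w),
\]
so that $\mathcal{G}(0,0) = \delta E(y_{\rm s}) = 0$ by Theorem~\ref{thm:K0}. Two properties must be verified. First, $\mathcal{G}$ is $C^1$ near $(0,0)$: because $\psi(x) = \tfrac{\lambda}{2}\dist(x,\Z)^2$ is smooth (indeed quadratic) precisely when every bond strain lies in $(-\tfrac12,\tfrac12)$, I must ensure that the strains of $y_{\rm s} + \hat u_K + w$ stay in a compact subinterval; this is guaranteed for $K$ and $\|w\|_{\Hcc}$ small by the bound $\max|G(l+\rho,s)-G(l,s)| < \tfrac12 - \eps$ of Theorem~\ref{thm:G} together with the decay of the strains of $\hat u_K$. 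Second, the partial derivative $D_w\mathcal{G}(0,0) = \delta^2 E(y_{\rm s})$ is an isomorphism $\Hcc \to (\Hcc)^*$: this is exactly the positive-definiteness of the Hessian encoded in Definition~\ref{def:stability}, combined with the Lax--Milgram lemma.

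The Implicit Function Theorem then yields, for $K_0$ small enough, a $C^1$ branch $K \mapsto w(K)$ with $w(0)=0$ and $\mathcal{G}(K,w(K)) = 0$, so that $y_K := y_{\rm s} + \hat u_K + w(K)$ is an equilibrium for every $|K| < K_0$. Local stability of $y_K$ for small $K>0$ follows by continuity: $\delta^2 E(y_K) \to \delta^2 E(y_{\rm s})$ in operator norm as $K \to 0$ (using $w(K)\to 0$ in $\Hcc$ and the decay of the strains of $\hat u_K$), and positive-definiteness is an open condition. Finally, to read off the crack opening I would decompose $y_K(x_k^+) - y_K(x_k^-)$ into the contributions of $y_{\rm s}$, of $\hat u_K$, and of $w(K)$: the predictor contributes the claimed $2K\sqrt{|x_k^+|}$, while the other two are of lower order. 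Indeed $y_{\rm s}$ has a bounded jump across $\Gamma$ (no crack opening is present in it), and the corrector solves $\delta^2 E(y_{\rm s})\,w(K) = -\delta E(y_{\rm s}+\hat u_K) + O(\|w\|_{\Hcc}^2)$ with a localised right-hand side, so by the decay of the associated lattice Green's function $w(K)$ decays to a constant at infinity and its contribution to the jump is asymptotically negligible. Passing to $k\to\infty$ gives the divergence of the opening.

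I expect the principal difficulty to lie in the consistency estimate for the predictor: simultaneously showing that the growing, infinite-energy field $\hat u_K$ generates a residual force that is summable in the dual of $\Hcc$ \emph{and} keeps every bond strain strictly inside the smooth quadratic well of $\psi$. Both requirements hinge on sharp control of the distortion of $\omega$ near the crack tip and along $\Gamma$, so that the strain bound $\tfrac12 - \eps$ of Theorem~\ref{thm:G} is genuinely preserved after superposing the crack field; quantifying how small $K_0$ must be is precisely the mechanism behind the ``$K>0$ small enough'' hypothesis. Once this estimate is in place, the remaining steps are routine applications of the Implicit Function Theorem and of the decay theory already developed for Theorems~\ref{thm:K0} and~\ref{thm:G}.
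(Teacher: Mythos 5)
Your proposal follows essentially the same route as the paper: the same predictor $K\hat{u}_{\rm c}$ superposed on the dislocations-only equilibrium $y_\mu$, the same root-finding map $\delta_u\E(K\hat u_{\rm c}+y_\mu+u)$ treated by the Implicit Function Theorem, the same verification that strains stay strictly inside the quadratic well of $\psi$ so that the energy is $C^2$ and the Hessian $\lambda\|\pmb{d}v\|^2$ is an isomorphism, and the same reading-off of the opening from the dominance of the predictor over the logarithmically growing corrector. The one point worth tightening is that invertibility of the Hessian does not come from Definition~\ref{def:stability} alone but from the discrete ellipticity \eqref{strong-s}, which in turn relies on the strain bound keeping $\pmb{d}y_\mu$ away from $\Z+\tfrac12$; otherwise the argument matches the paper's.
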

}

\subsection{{ \corr New concepts}}\label{sec:int-ideas}
{\corr The crack surface breaks the notion of duality between $\Lambda$ and $\Lambda^*$ near $\Gamma$, which implies that the framework of the \emph{lattice complex} advanced in \cite{AO05,H17} cannot be readily applied. The key new concept of the present work is the introduction of a more general framework, which is termed as the \emph{lattice manifold complex}.

To preserve the notion of duality, the cracked crystal $\La$ is first mapped onto a distorted half-space using the complex square root mapping (see Figure~\ref{fig::intro2b}), followed by the creation of a second sheet of lattice by reflection over the half-space boundary. The complex square root mapping can then be inverted, giving rise to a manifold-like structure.

This construction leads to a delicate interplay between the notions of orientation and duality (both crucial in the algebraic-topology-based framework of CW complexes \cite{whitehead1949}) and the new notion of crack reflection symmetry. The key contribution of the paper is thus the introduction of this framework, both in terms of geometry and the resulting functional analytic setup. This is done in Section~\ref{sec:geo_func_frame}.

In particular, the work in this regard could be seen as a first example of a general new approach for handling the boundaries in finite and in particular non-convex discrete domains, centred around the distortion of the lattice via a conformal mapping that straightens the boundary. 

The concept of a near-crack-tip plasticity atomistic equilibrium, as introduced in \linebreak Theorem~\ref{thm:C}, opens the door to a number of further studies on this topic. The careful characterisation of the key components of such an equilibrium paves the way for a subsequent upscaling analysis and for studies of plasticity-induced fracture resistance. A thorough discussion about future research directions is presented in Section \ref{sec:conclusions}.
}
\subsection{Outline of the paper}
In Section~\ref{sec:geo_func_frame} the  geometric and functional framework of a lattice manifold complex is presented. Section~\ref{sec:latmancom} is devoted to introducing the lattice manifold complex, while Section~\ref{sec:pforms} discusses the resulting functional framework. The crucial notion of duality is described in Section~\ref{sec:dual_lmc}. Then, in Section~\ref{sec:dis_conf}, the notion of a dislocation configuration in a cracked crystal is introduced. Some of the more technical definitions related to Section~\ref{sec:geo_func_frame} are deferred to Appendix~\ref{app1}.

Section~\ref{sec:atom_model} describes the variational framework employed to study atomistic configurations. In Section~ \ref{sec:energy} the energy difference is introduced and equilibria-related definitions are stated. Section~\ref{sec:cont_pred} addresses the notion of the far-field continuum prediction in the framework of the lattice manifold complex. 

The main results are presented in Section~\ref{sec:main}. Section~\ref{sec:G} is devoted to the study of a Green's function on the dual lattice manifold complex, which will be shown to be by construction equivalent to considering a lattice Green's function in a cracked crystal with zero Dirichlet boundary condition. Such Green's function is proven to exist and its decay properties are characterised. This paves the way for the results concerning existence of dislocations-only equilibrium configurations, which are stated in Section~\ref{sec:existence_k0} and centre around an explicit construction relying on duality and the aforementioned Green's function. This is followed by Section~\ref{sec:existence_Knot0} in which existence of near-crack-tip plasticity equilibrium configurations is established, courtesy of preceding results and  an appropriate application of the Implicit Function Theorem.

The concluding remarks containing the outlook for future work are presented in Section \ref{sec:conclusions}, which is followed by proofs of the main results gathered in Section \ref{sec:proofs}.
\section{Geometric and functional framework}\label{sec:geo_func_frame}
In this section a geometric and functional framework well-suited to describing dislocations interacting with each other in the vicinity of a crack tip will be presented. The approach and notational conventions are based on \cite{H17}, in which the case of interacting dislocations without a crack present is considered, and also draws from and formalises the idea of a discrete complex square root manifold introduced in \cite{2018-antiplanecrack}. 

\subsection{Lattice manifold complex}\label{sec:latmancom}
The notational framework discussed in detail in  \cite[Section 2.1]{H17}, which in itself is based on the pioneering work in \cite{AO05}, can be extended to include the treatment of a {\it lattice manifold complex}, defined as a CW complex through the following construction. The reader is referred to Figure \ref{fig:lattice_manifold} for a visual intuition.

The underlying space of the complex is the Riemann surface $\Ss$ associated with the complex square root, defined as
\begin{equation}\label{S}
\mathcal{S}:= \{(z,w) \in \C^2\,|\, w^2 = z\},
\end{equation}
which is a connected {\corr Hausdorff} space \cite{napier_riemann_surfaces}. A  description of the manifold $\mathcal{S}$ to be used throughout is based on identifying $\C \cong \R^2$ and provided by the complex square root mapping $\omega\,:\,\R^2\setminus \{0\} \to \R^2$ defined as
\begin{equation}\label{om-map}
\omega(z) = \sqrt{r_z}\left(\cos(\theta_z/2),\sin(\theta_z/2)\right),
\end{equation}
where the polar coordinates are used with $z = r_z(\cos\theta_z,\sin\theta_z)$, where $r >0$ and $\theta \in (-\pi,\pi]$ (note the inclusion of the right end of the interval). It is easy to verify that if $(z,w) \in \mathcal{S}$ and $z\neq 0$, then, either $w \cong \omega(z)$ or $w \cong -\omega(z)$. From now on when there is no risk of confusion, the equality sign '$=$' shall be used in the place of '$\cong$' when discussing equivalent objects in $\R^2$ and $\C$.

\begin{figure}[!htbp]
  \begin{subfigure}[t]{.48\textwidth}
    \centering
    \includegraphics[width=\linewidth]{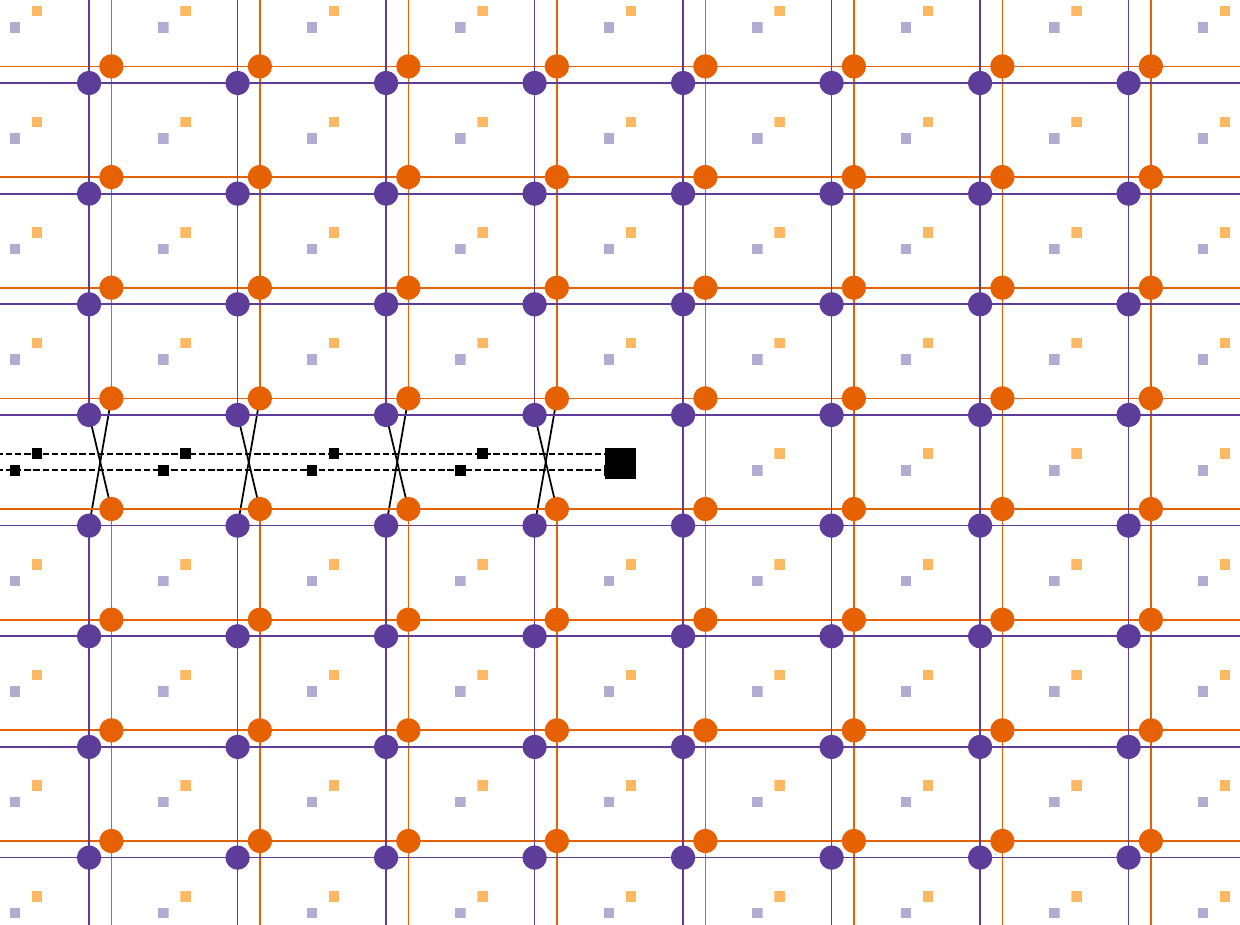}
    \caption{$\,$}\label{fig::lat_z}
  \end{subfigure}
	\quad
  \begin{subfigure}[t]{.48\textwidth}
    \centering
    \includegraphics[width=\linewidth]{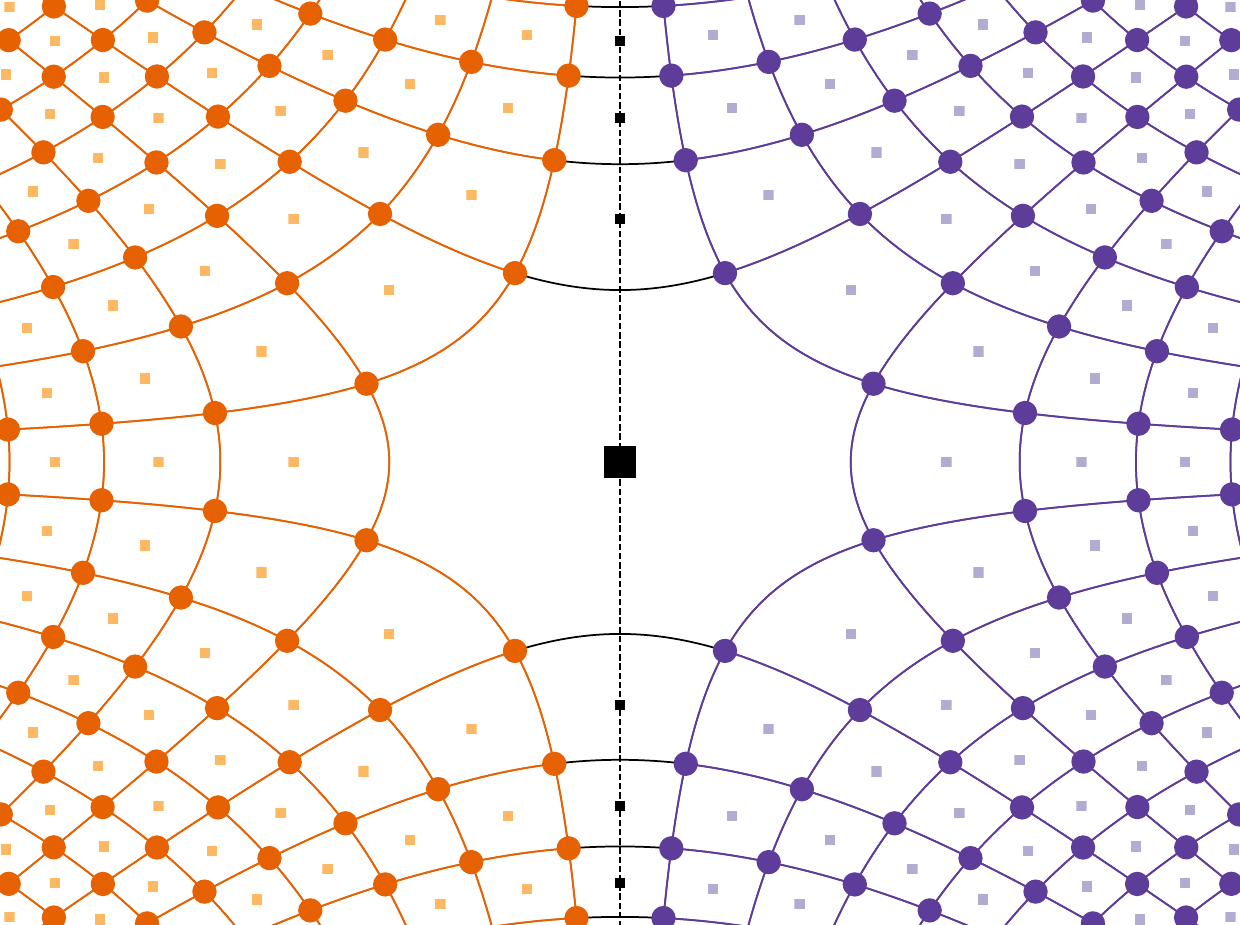}
    \caption{$\,$}\label{fig::lat_w}
  \end{subfigure}
  \captionsetup{width=0.95\linewidth}
  \caption{The geometry of manifolds $\Mc$ and $\Wc$, which together describe the complex square root manifold $\Ss$. {\corr In \subref{fig::lat_z} the $\Mc$-manifold description is depicted}, with $\Mc_0$ consisting of two coinciding lattices $\Mc^{+}_0$ (purple dots) and $\Mc^-_0$ (orange dots) and lines denoting elements of $\Mc_1$. The large black square represents the crack tip position, taken to be the centre of the coordinate system. The crack $\Gamma^{\Mc}_0$, through which the lattices are glued is denoted by a double dashed line. The little squares depict the $0$-cells of the dual lattice manifold $\Mc_0^*$. {\corr In \subref{fig::lat_w} the $\Wc$-manifold description is shown}, with the distorted double lattice $\Wc_0$ consisting of purple dots, corresponding to $\omega_{\Mc}\left(\Mc_0^+\right)$ and orange dots, corresponding to $\omega_{\Mc}\left(\Mc^-_0\right)$ and lines corresponding to elements of $\Wc_1$. The crack $\Gamma^{\Wc}_0$ is shown as the vertical dashed black line $\{w_1 = 0\}$, with the large black square representing the crack tip. The little  squares depict the $0$-cells of the dual distorted lattice $\Wc_0^*$.}
\label{fig:lattice_manifold}
\end{figure}

The fact that $w = \pm \omega(z)$ highlights the general construction: two copies of $\R^2 =  \C$ are taken and 'glued' together at the branch cut
\begin{equation}\label{Gamma0}
 \Gamma_0 := \{(x_1,0)\,|\,x_1 \leq 0\},
\end{equation}
which {\corr is the continuum (of infinitesimal thickness) counterpart to the atomistic crack surface $\Gamma$ introduced in Section~\ref{sec:out-res} and Figure~\ref{fig:intro1}}. This motivates defining 
\begin{equation}\label{Mc}
\Mc := \left(\R^2  \times \{+1\}\right)\cup\left(\left(\R^2\setminus\{(0,0)\}\right)  \times \{-1\}\right),
\end{equation}
which in the light of the above discussion is isomorphic to
\begin{equation}\label{Ssw}
\Wc:= \{w \in \R^2\,|\, \exists z \in \R^2\,\text{ such that } (z,w) \in \Ss\} = \R^2
\end{equation}
through the mapping $\omega_{\Mc}\,:\,\Mc \to \Wc$ applied to $k = (k_x,k_b) \in \Mc$, where $k_x \in \R^2$ and $k_b \in \{-1,1\}$ and given by
\begin{equation}\label{om_mc}
\omega_{\Mc}(k) = \begin{cases}
+\omega(k_x) \quad&\text{if } k_x \neq (0,0),\,k_b = +1,\\
-\omega(k_x) \quad&\text{if } k_x \neq (0,0),\,k_b = -1,\\
\; (0,0)\quad\quad&\text{if } k_x = (0,0),\,k_b = +1.
\end{cases}
\end{equation}
Likewise, the mapping $k \mapsto (k_x,\omega_{\Mc}(k))$ is an isomorphism between $\Mc$ and $\Ss$, thus emphasising that all three descriptions are equivalent.

For future reference it is also useful to introduce 
\begin{equation}\label{Mcpm}
\Mc^{\pm}:= \{(k_x,k_b) \in \Mc\,|\, k_b = \pm 1\}
\end{equation}
and 
\begin{equation}\label{Wcpm}
\Wc^{\pm}:= \omega_{\Mc}(\Mc^{\pm}).
\end{equation}

The $\Mc$-manifold equivalent of $\Gamma_0$ defined in \eqref{Gamma0} is given by $\Gamma_0^{\Mc}:=\Gamma_0^+ \cup \Gamma_0^-$, where
\begin{equation}\label{McGamma0}
\Gamma_0^{+}:= \left(\Gamma_0 \times \{+1\}\right),\quad \Gamma_0^-:= \left(\left(\Gamma_0 \setminus \{(0,0)\}\right)\times \{-1\}\right),
\end{equation}
which also gives rise to the vertical line
\begin{equation}\label{WcGamma0}
\omega_{\Mc}(\Gamma_0^+) \cup \omega_{\Mc}(\Gamma_0^-) = \{ w = (w_1,w_2) \in \Wc\;|\; w_1 = 0\} =: \Gamma_0^{\Wc}.
\end{equation}
Both $\Gamma_0^{\Mc}$ and $\Gamma_0^{\Wc}$ are visualised in Figure \ref{fig:lattice_manifold}.

With the underlying spaces of $\Ss$ and $\Mc$ introduced, the square lattice manifold can now be defined as
\begin{equation}\label{LSs}
\Ss_0:= \left\{(z,w) \in \mathcal{S}\,|\, z - \left(\frac{1}{2},\frac{1}{2}\right)\in \Z^2\right\}.
\end{equation}
An equivalent description is given by 
\begin{equation}\label{LMc}
\Mc_0 := \left(\Z^2 - \left(\frac{1}{2},\frac{1}{2}\right)\right)\times\{-1,1\},
\end{equation}
which is depicted in Figure \ref{fig::lat_z}. The mapping $\omega_{\Mc}$ from \eqref{om_mc} provides an isomorphism from $\Mc_0$ to 
\begin{equation}\label{Lssw}
\Wc_0:=\{w \in \R^2\,|\,\exists z \in \R^2\,\text{ such that }(z,w) \in \Ss_0\},
\end{equation}
which is a deformed lattice depicted in Figure \ref{fig::lat_w}. It again makes sense to distinguish
\begin{equation}\label{LMcpm}
\Mc^{\pm}_0:=\{(k_x,k_b) \in \La_{\Mc}\,|\,k_b = \pm 1\}.
\end{equation}

While the $\Ss$-manifold description introduced in \eqref{S} and \eqref{LSs} provides a simple definition underpinning the essence of the approach, in practice the $\Mc$-manifold description introduced in \eqref{Mc}, \eqref{Mcpm}, \eqref{LMc}, \eqref{McGamma0}, \eqref{LMcpm} and the $\Wc$-manifold description introduced in \eqref{Ssw}, \eqref{WcGamma0} and \eqref{Lssw} will be primarily employed. In particular, deformations of the cracked crystal will be represented by deformations of $\Mc_0$, whereas the lattice manifold complex structure is more easily discussed in the $\Wc$-description and can be introduced as follows.

In the abstract framework of CW complexes, as introduced by Whitehead in \cite{whitehead1949}, $\Wc$ is considered the underlying space, whereas the set of $0$-cells is based upon $\Wc_0$. The remaining sets of $1$-cells and $2$-cells are defined iteratively: for $p\geq 1$, a $p$-dimensional cell is $e \subset \Wc$ for which there exists a homeomorphism mapping the interior of the $p$-dimensional closed ball in $\R^p$ onto $e$, and mapping boundary of the ball onto a finite union of cells of dimension less than $p$. This has to be done in such a way that $\Wc = \R^2$ is the disjoint union of its $0$-, $1$- and $2$-cells.

The set of all $p$-cells is denoted by $\Wc_p$, whereas, mildly abusing the notation, the entire complex is referred to by $\Wc$ itself.

The isomorphism $\omega_{\Mc}$ defined in \eqref{om_mc}  maps $\Wc_p$ to $\Mc_p$, a set of $p$-cells of the complex $\Mc$ (again mildly abusing the notation). 

Away from the vertical line $\Gamma_0^{\Wc}$ in \eqref{WcGamma0} representing the crack surface, the construction of $1$- and $2$-cells of $\Wc$ is as in \cite{H17}, since one can map $\Wc_0$ back to $\Mc_0$ defined in \eqref{LMc}, which can then be treated as two separate lattices $\Mc_0^+$ and $\Mc_0^-$. 

The $\Wc$-description then permits defining the remaining sets of $1$-cells as shown in Figure \ref{fig::lat_w}, which automatically ensures a complete description of $2$-cells too through the iterative procedure described in the previous paragraph. Note that in particular the border of the central $2$-cell $E_0$ consists of eight $1$-cells.

\begin{figure}[!htbp]
  \begin{subfigure}[t]{.48\textwidth}
    \centering
    \includegraphics[width=\linewidth]{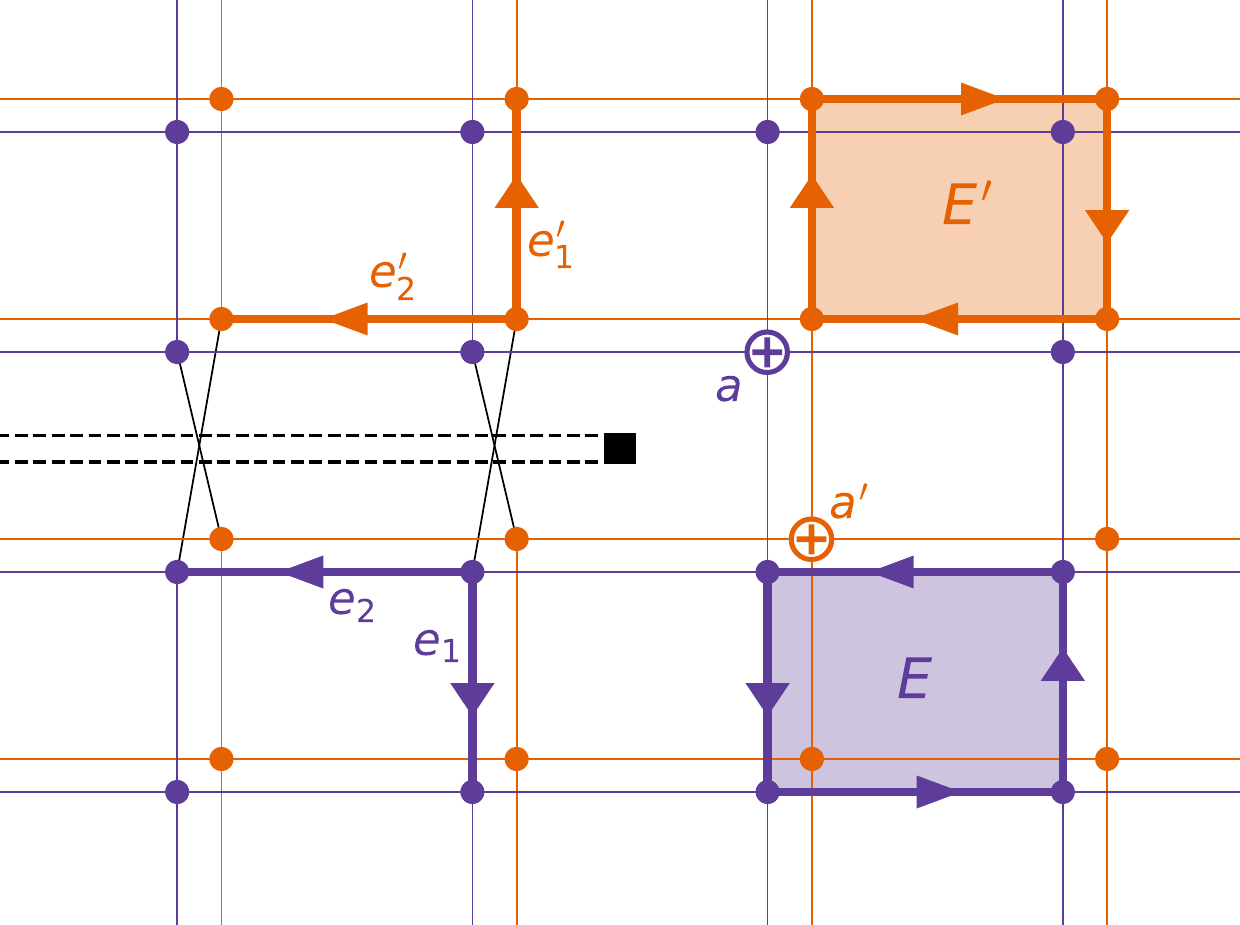}
    \caption{$\,$}\label{fig::1}
  \end{subfigure}
	\quad
  \begin{subfigure}[t]{.48\textwidth}
    \centering
    \includegraphics[width=\linewidth]{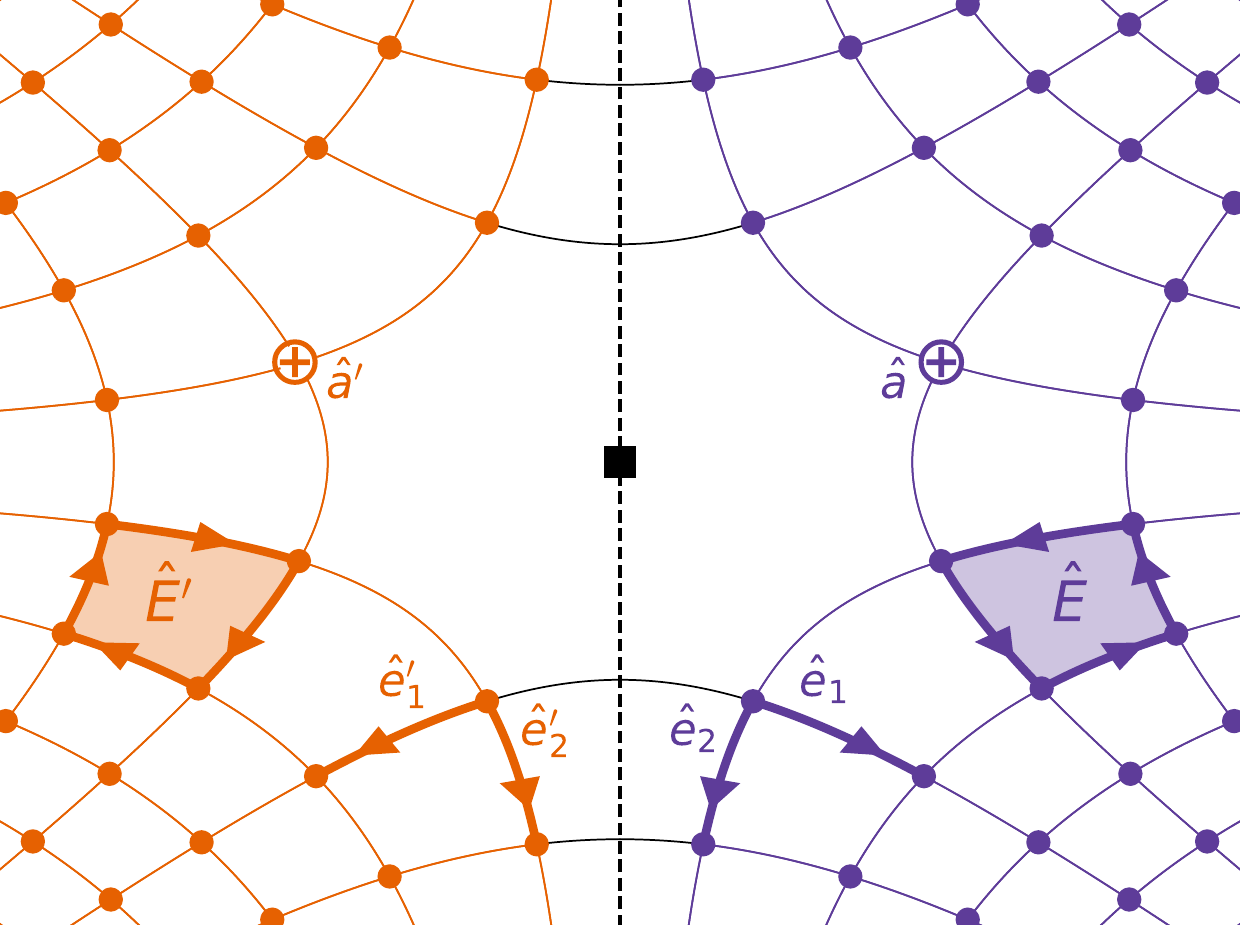}
    \caption{$\,$}\label{fig::2}
  \end{subfigure}
  \captionsetup{width=0.95\linewidth}
  \caption{The lattice manifold complex, with the notion of orientation and crack reflection symmetry highlighted. {\corr In \subref{fig::1} the $\Mc$-complex is depicted}, with $p$-cells in $\Mc^+$ in purple and $p$-cells in $\Mc^-$ in orange, together with highlighted cells: a positively oriented $0$-cell $a \in \Mc_0^+$ and its crack reflection ${a'\in \Mc_0^-}$ (with orientation preserved), two $1$-cells $e_1,e_2 \in \Mc_1^+$ and their crack reflections $e_1',e_2' \in \Mc_1^-$, a negatively (anti-clockwise) oriented $2$-cell $E \in \Mc_2^+$ and its crack reflection $E' \in \Mc_2^-$, which is positively (clockwise) oriented (note the change in orientation). {\corr In \subref{fig::2} the equivalent $\Wc$-description is shown, which is } obtained through $\omega_{\Mc}$ mapping. Corresponding $p$-cells and their crack reflections in the $\Wc$-complex with notation $\hat{e} \equiv \omega_{\Mc}(e)$. }
\label{fig:3}
\end{figure}

The CW complex machinery further enriches the description by prescribing each cell $e \in \Wc_p$ with orientation,  with $-e$ used to denote the same cell (as a subset of the underlying space) but with reversed orientation. As a result, the boundary operator $\partial$ and the co-boundary operators $\delta$ can be defined, with $\partial$ mapping an oriented $p$-cell to its consistently oriented boundary, which is a collection of $(p-1)$-cells, and $\delta$ mapping an oriented $p$-cell to a collection of $(p+1)$-cells containing it in its boundary.

To distinguish when a cell is treated just as a subset of the underlying space, for $e \in \Wc_p$, the notation $\chi(e)$ is used to denote the space it occupies in $\Wc = \R^2$, which implies $\chi (e) = \chi (-e)$. If $e \in \Mc_p$ then $\chi(e)$ is defined as $\chi(e):= \omega_{\Mc}^{-1} \left(\chi(\omega_{\Mc}(e))\right)$. In particular, it is worth distinguishing
\begin{equation}\label{Mcp_pm}
\Mc_p^{\pm} := \{ e \in \Mc_p\;|\; \chi(e) \subset \Mc^{\pm}\},
\end{equation}
and
\begin{equation}\label{Wcp_pm}
\Wc_p^{\pm} := \omega_{\Mc}^{-1}(\Mc_p^{\pm}).
\end{equation}

The notion of orientation is intertwined with the crucial notions of duality in a general CW complex,and of {\it crack reflection symmetry} in the lattice manifold complex, with the latter intuitively a consequence of the line symmetry across $\Gamma_0^{\Wc}$ in the $\Wc$-description and the resulting one-to-one correspondence between {\it lattices }$\Mc_0^+$ and $\Mc_0^-$. The notational convention is that a crack reflection of a $p$-cell $e$ is denoted by $e'$ and the construction in particular is such that $\left(\Mc_p^{\pm}\right)' = \Mc_p^{\mp}$ and hence $\left(\Wc_p^{\pm}\right)' = \Wc_p^{\mp}$.

With the underlying structure spatially inhomogeneous and manifold-like in nature, the interplay between orientation, duality and crack reflection is quite nuanced and thus worth spelling out in full detail. This is, in the interest of clarity of exposition, deferred to Appendix \ref{app1} and the reader is referred to Figure~\ref{fig:3}, Figure~\ref{fig:9} and Figure~\ref{fig:6} for an intuitive visual description.
\begin{remark}\label{not-abuse1}
To keep the notation concise, it is convenient to commit several natural abuses of notation, which will be listed here for clarity. Firstly, the notation $\Wc$ interchangeably refers to both {\it the underlying space} defined in \eqref{Ssw} and the resulting CW complex, which consists of both the space and the collection of cells $\bigcup_p \Wc_p$. Secondly, the notation $\Wc_0$ can refer to both {\it the lattice manifold} defined in \eqref{Lssw}, and {\it the space }of $0$-cells of the complex $\Wc$, with the difference being that a $0$-cell is additionally assigned an orientation. Same notational conventions apply to equivalent objects in the $\Mc$-manifold description.
\end{remark}
\subsection{The spaces of $p$-forms preserving crack symmetries}\label{sec:pforms}
In order to discuss deformations of a cracked crystal, a consistent way of defining functions on 

A general definition of the space of $p$-forms defined on the set of $p$-cells $\Omega_p$ of a CW complex $\Omega$ to be used throughout is given by
\begin{equation}\label{Vsc}
\Vsc(\Omega_p):= \{f\,:\,\Omega_p\to \R \;|\;f(e) = -f(-e), \text{ for any } e \in \Omega_p\}.
\end{equation}
From now on either $\Omega = \Wc$ or $\Omega = \Mc$. 
\begin{remark}\label{not-abuse2}
Complementary to Remark \ref{not-abuse1}, it will often be convenient to define functions in $\Vsc(\Omega_p)$ simply through specifying the values they take on positively oriented $p$-cells only, which in the light of \eqref{Vsc} is enough to uniquely determine them.
\end{remark}

To reflect the fact that the crystalline system considered is cracked, the presence of the crack has to be encoded in the functional setup. In \cite{2018-antiplanecrack} this is done by considering the square lattice $\Mc_0^+$ defined in \eqref{LMcpm} and explicitly removing interactions across $\Gamma_0$ defined in \eqref{Gamma0}. In the follow-up study in \cite{2019-antiplanecrack} this is instead encoded in the interatomic potential employed. 

In the present study the former approach is more natural and in particular in the framework of lattice manifold complexes this can be achieved with the help of crack reflection symmetry described visually in Figure~\ref{fig:3}, with full details presented in Appendix \ref{app1}.

The space of $p$-forms preserving crack symmetries is defined as
\begin{equation}\label{p-form-space-s}
\Vsc_{\rm s}(\Omega_p):= \Big\{f \in \Vsc(\Omega_p)\;|\; f(e') = f(e) \Big\},
\end{equation}
which is a vector space under pointwise addition. 

\begin{remark}\label{rem-crack-sym}
As a consequence of the crack reflection symmetry, if $f \in \Vsc_{\rm s}(\Wc_p)$, then 
\begin{equation}\label{Wc_w1}
\forall\, e \in \Wc_p \text{ with } \chi(e) \cap \Gamma_0^{\Wc} \neq \emptyset,\; f(e) = 0,
\end{equation}
which implies that $f$ is uniquely determined by the values it takes on $\Wc_p^+$, defined in \eqref{Wcp_pm}. This is because $(\Wc_p^+)' = \Wc_p^-$ and any $e \not\in (\Wc^+_p)\cup(\Wc_p^-)$ is such that $e \cap \Gamma_0^{\Wc} \neq \emptyset$.

By the same logic, if $g \in \Vsc_{\rm s}(\Mc_p)$, then it is uniquely determined by the values it takes on $\Mc^+_p$, defined in \eqref{Mcp_pm}, with $g(e) = 0$ for any $e \not\in (\Mc^+_p)\cup(\Mc_p^-)$, defined in \eqref{Wcp_pm}. This underlines that the setup employed is equivalent to considering a single cracked crystal lattice. 
\end{remark}

Since the underlying assumption is that the crystal lattice is of infinite size, it is convenient to define the space of crack-symmetric $p$-forms with compact support,
\begin{equation}\label{p-form-comp}
\Vsc_{\rm c}(\Omega_p):= \{f \in \Vsc_{\rm s}(\Wc_p)\,|\, \supp(f)\,\text{ is compact in } \Omega \}
\end{equation}
and also introduce, for any $f \in \Vsc_{\rm s}(\Omega_p)$,
\begin{equation}\label{inf-norm}
\|f\|_{\infty}:= \sup_{e \in \Omega_p}|f(e)|.
\end{equation}
For any $f \in \Vsc(\Omega_p)$ and any finite $A \subset \Omega_p$, one can define the integral
\begin{equation}\label{def-int}
\int_A f := \sum_{e \in A} f(e).
\end{equation}
The differential operator $\pmb{d}\,:\,\Vsc_{\rm s}(\Omega_p) \to \Vsc_{\rm s}(\Omega_{p+1})$ and the co-differential operator $\pmb{\delta}\,:\,\Vsc_{\rm s}(\Omega_p) \to \Vsc_{\rm s}(\Omega_{p-1})$ are given by
\begin{equation}\label{d-delta}
\pmb{d}f(e) := \int_{\partial e} f = \sum_{\tilde{e} \in \partial e} f(\tilde{e}),\quad \pmb{\delta}f(e) := \int_{\delta e} f = \sum_{\tilde{e} \in \delta e} f(\tilde{e}). 
\end{equation}
where the boundary operator $\partial$ and the co-boundary operator $\delta$ are discussed in Section~\ref{sec:latmancom}. The fact that both $\pmb{d}f$ and $\pmb{\delta}f$ preserve the crack reflection symmetry follows naturally from the definition of $\Vsc_{\rm s}(\Omega_p)$ in \eqref{p-form-space-s} and how reflections are defined in \eqref{wc0-refl} and \eqref{wc12-refl}.

The notion of integration introduced in \eqref{def-int} permits defining the bilinear form 
\begin{equation}\label{def-inner}
(f,g) := \int_{\Omega_p}f\,g,
\end{equation}
which is well-defined when at least one of $f$ or $g$ lies in the space $\Vsc_{\rm c}(\Omega_p)$ and leads to an integration by parts formula
\begin{equation}\label{def-int-parts}
(\pmb{d}f,g) = (f,\pmb{\delta}g),
\end{equation}
holding true for $f \in \Vsc_{\rm c}(\Omega_p)$ and $g \in \Vsc_{\rm c}(\Omega_{p+1})$.

The bilinear form in \eqref{def-inner} motivates defining
\begin{equation}\label{def-L2}
\mathscr{L}^2(\Omega_p):= \{f \in \Vsc_{\rm s}(\Omega_p)\,|\, (f,f) < +\infty\},
\end{equation}
which can be shown to be a Hilbert space with a norm $\|\cdot\|:= (\cdot,\cdot)^{1/2}$ induced by the inner product $(\cdot,\cdot)$. Of particular interest in the subsequent analysis is the discrete Sobolev space 
\begin{equation}\label{Hcc}
\Hcc(\Omega_0):=\{u \in \Vscs(\Omega_0)\;|\; \|\pmb{d}u\|_{\mathscr{L}^2(\Omega_1)} < \infty\,\text{ and }\, u(e_0) = 0\},
\end{equation}
where, if $\Omega = \Mc$, then $e_0$ is such that $\chi(e_0) =\left(\tfrac{1}{2},\tfrac{1}{2},+1\right)$ and, if $\Omega = \Wc$, then $e_0$ is such that $\chi(e_0)=\omega_{\Mc}\left(\tfrac{1}{2},\tfrac{1}{2},+1)\right)$. The choice of $e_0$ is arbitrary and ensures that only one constant displacement lies in the space.

The discrete Laplace operator $\pmb{\Delta}\,:\,\Vsc(\Omega_0) \to \Vsc(\Omega_0)$ is also introduced, given by \begin{equation}\label{def-hlaplace}
\pmb{\Delta}f := \pmb{\delta d}f,
\end{equation}
as it will be needed to provide a characterisation of equilibria in Section \ref{sec:existence_k0}.

The isomorphism $\omega_{\Mc}$ from \eqref{om_mc} translates to the spaces of $p$-forms, with 
\[
f \in \Vsc(\Wc_p) \iff \left(f \circ \omega_{\Mc}\right) \in \Vsc(\Mc_p),
\]
with similar statements for spaces $\Vsc_{\rm s}(\Wc_p), \Vsc_{\rm c}(\Wc_p), \mathscr{L}^2(\Wc_p)$ and $\dot{\mathscr{H}}^1(\Wc_0)$. 



\subsection{Dual lattice manifold complex}\label{sec:dual_lmc}
The construction of the lattice manifold complex preserves the crucial notion of duality, permitting defining the corresponding {\it dual lattice manifold complex} through the usual construction, as discussed in \cite[Section 2.2]{H17}. The visual guide is provided by Figure \ref{fig:9}.

\begin{figure}[!htbp]
  \begin{subfigure}[t]{.48\textwidth}
    \centering
    \includegraphics[width=\linewidth]{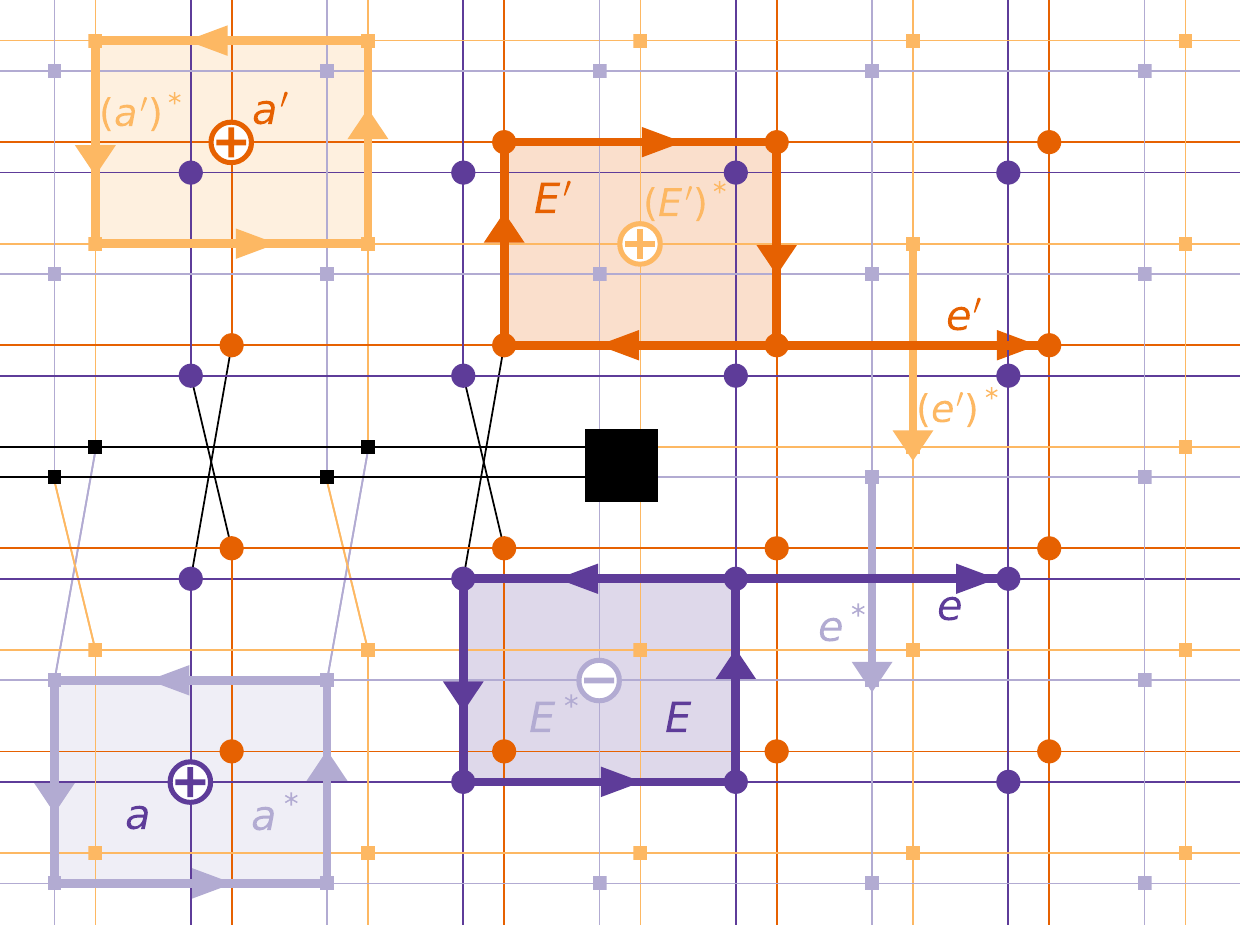}
    \caption{$\,$}\label{fig::7}
  \end{subfigure}
	\quad
  \begin{subfigure}[t]{.48\textwidth}
    \centering
    \includegraphics[width=\linewidth]{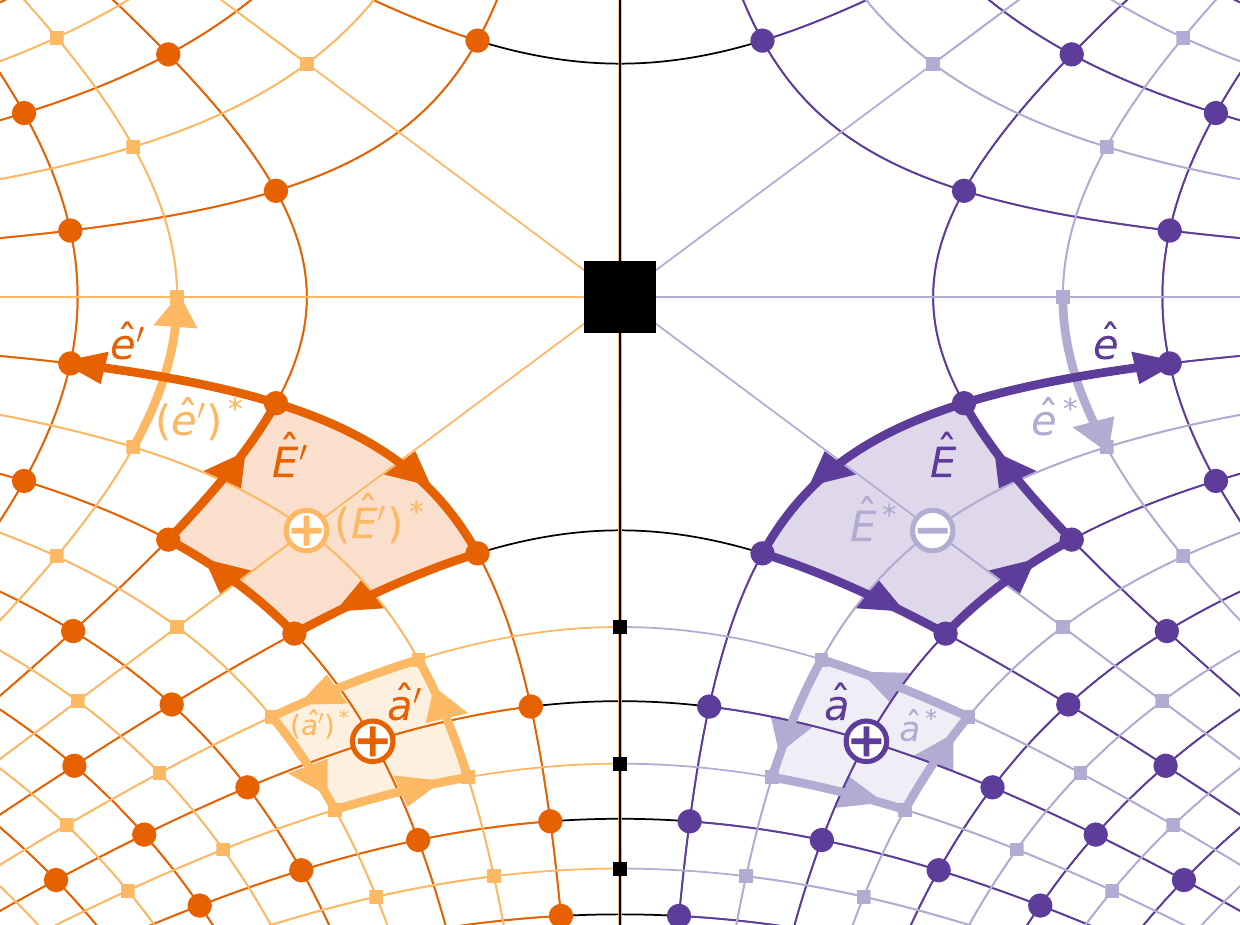}
    \caption{$\,$}\label{fig::8}
  \end{subfigure}
  \captionsetup{width=0.95\linewidth}
  \caption{The duality in the framework of lattice manifold complex, with the differing notions of orientation and crack reflection symmetry emphasised. {\corr In \subref{fig::7}} the primal complex $\Mc$ is depicted in darker colours with dual complex $\Mc^*$ in lighter, with several examples of $p$-cells $e \in \Mc_p$ and their duals $e^* \in \Mc^*_{2-p}$ as well as their crack reflections $e'$ and $(e')^*${\corr : a positively oriented $0$-cell $a \in \Mc_0^+$ and its dual $a^* \in \Mc^{*,+}_2$ together with their crack reflections ${a'\in \Mc_0^-}$ and $(a')^* \in \Mc_2^{-,*}$; a $1$-cell $e \in \Mc_1^+$ and its dual $e^* \in \Mc_1^{*,+}$, together with their crack reflections $e' \in \Mc_1^-$ and $(e')^* \in \Mc_1^{*,-}$; a negatively oriented $2$-cell $E \in \Mc_2^+$ and its dual $E^* \in \Mc_0^{*,+}$ together with their crack reflections $E' \in \Mc_2^-$ and $(E')^* \in \Mc_0^{*,-}$.} {\corr In \subref{fig::8} the equivalent $\Wc$-description is shown, which is} obtained through $\omega_{\Mc}$ mapping. Corresponding $p$-cells and their duals and crack reflections are highlighted{\corr , with notation $\hat{e} \equiv \omega_{\Mc}(e)$.}}
\label{fig:9}
\end{figure}

The set of $0$-cells of the dual lattice manifold complex $\Ss^*$ is given by
\begin{equation}\label{DLSs}
\Ss_0^*:= \left\{(z,w) \in \mathcal{S}\,|\, z \in \Z^2\right\}
\end{equation}
and likewise, for the dual $\Mc^*$, one has
\begin{equation}\label{DLMc}
\Mc_0^* :=  \Mc_0^{*,+} \cup \Mc_0^{*,-},
\end{equation}
where 
\begin{equation}\label{DLMc2}
\Mc_0^{*,+}:= \left(\Z^2\times\{1\}\right)\quad \text{ and }\quad \Mc_0^{*,-}:=  \left(\left(\Z^2\setminus\{(0,0)\}\right) \times \{-1\}\right)
\end{equation}
and similarly, for $\Wc^*$,  
\[
\Wc_0^*:= \{w \in \R^2\,|\, \exists z \in \R^2\,\text{ such that } (z,w) \in \Ss_0^*\}.
\]
Figure \ref{fig:lattice_manifold} provides a visual representation of $\Mc_0^*$ and $\Wc_0^*$.  

The set of $p$-cells of the dual lattice manifold complex $\Wc^*$ is denoted by $\Wc_p^*$, which, through the isomorphism $\omega_{\Mc}$ in \eqref{om_mc} gives rise to $\Mc_p^*$. The duality between complexes $\Wc$ and $\Wc^*$ is expressed through the isomorphism 
\begin{equation}\label{iso}
^*\,:\,\Wc_p \to \Wc^*_{2-p},\text{ for }p=0,1,2.
\end{equation}

The notion of orientation on the dual lattice manifold complex and the action of the isomorphism $^*$ is visually presented in Figure~\ref{fig:9}, with the full discussion deferred to Appendix \ref{app1}. The underlying idea behind the construction is to ensure that the boundary operator $\partial^*$ and the co-boundary operator $\delta^*$ on the dual lattice manifold complex satisfy 
\begin{equation}\label{def-bond-op-dual}
\partial^*e^* = (\delta e)^*\,\text{ and }\,\delta^*e^* = (\partial e)^*,
\end{equation}
which will prove crucial when describing atomistic equilibria in Section \ref{sec:atom_model}.   

The notion of crack reflection symmetry introduced visually for the primal complex in Figure \ref{fig:3} can be extended to the dual lattice manifold by defining for any $e^* \in \Wc_p^*$ its reflection as
\begin{equation}\label{wc*-refl}
(e^*)' := (e')^*.
\end{equation}
The resulting notion of crack reflection symmetry on the dual lattice manifold complex is visually presented in Figure \ref{fig:6}, with full details presented in Appendix \ref{app1}. It is in particular stressed that, as shown in Figure \ref{fig::5}, the resulting notion of symmetry on the dual lattice manifold complex no longer coincides with a simple line symmetry  across $\Gamma_0^{\Wc}$.

\begin{figure}[!htbp]
  \begin{subfigure}[t]{.48\textwidth}
    \centering
    \includegraphics[width=\linewidth]{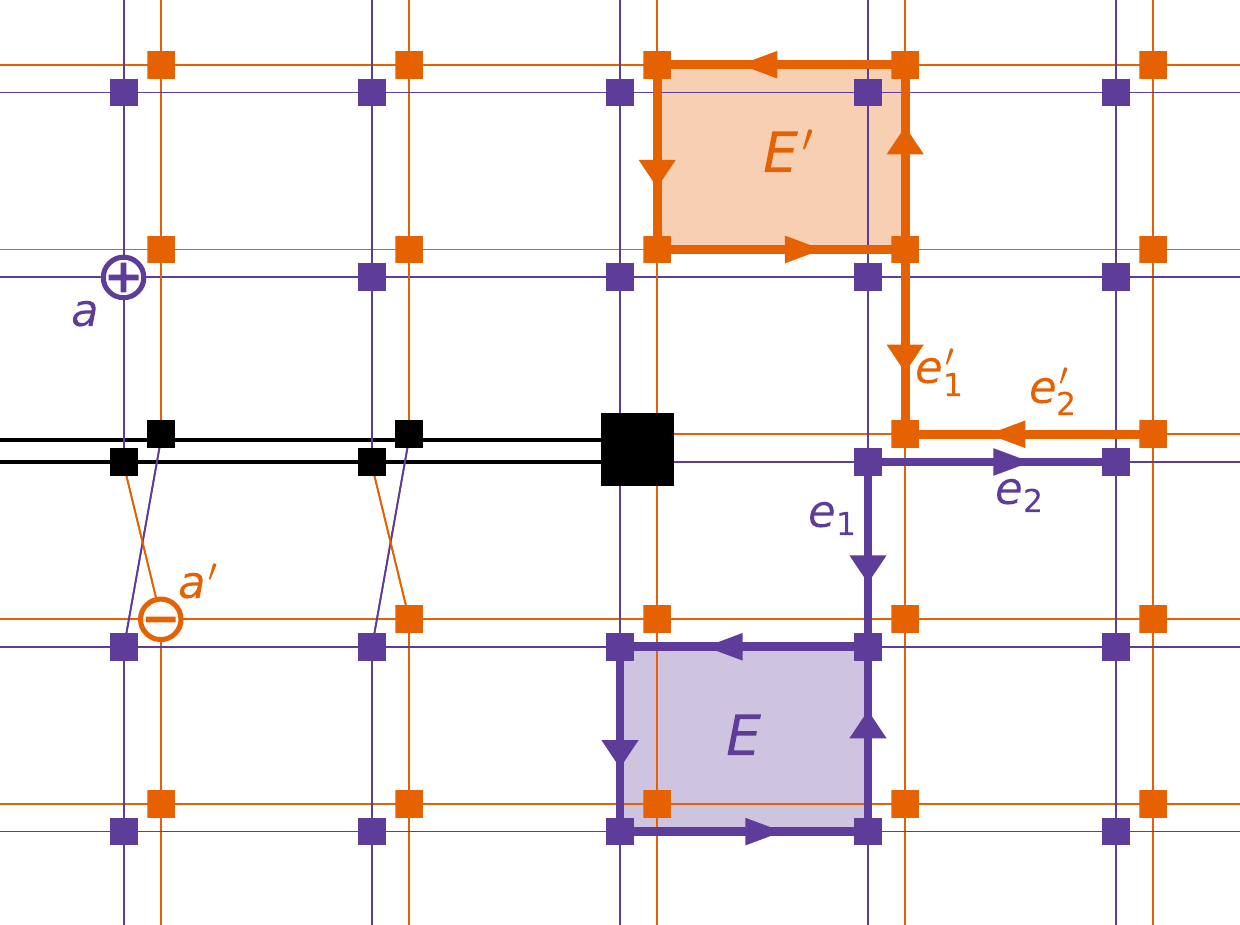}
    \caption{$\,$}\label{fig::4}
  \end{subfigure}
	\quad
  \begin{subfigure}[t]{.48\textwidth}
    \centering
    \includegraphics[width=\linewidth]{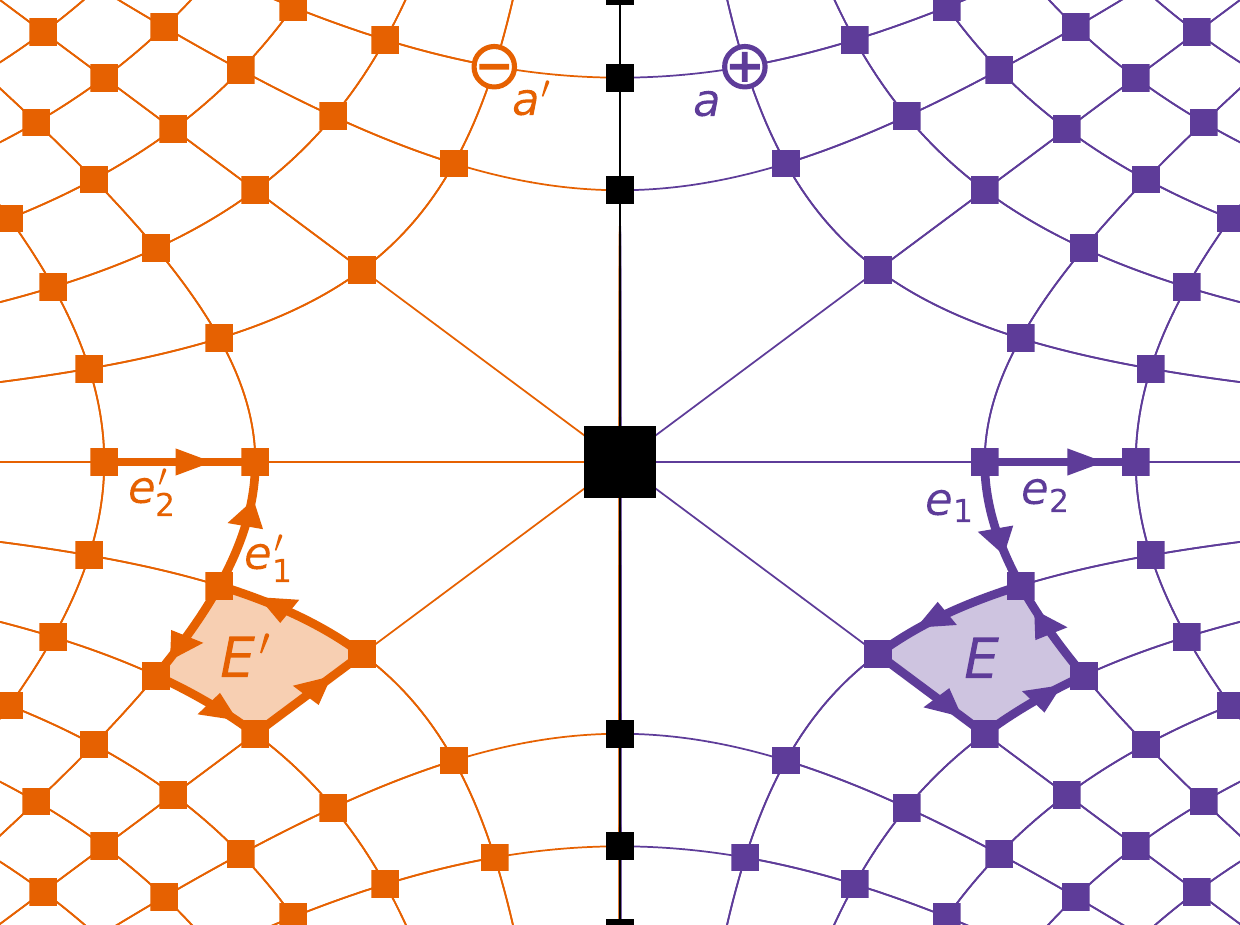}
    \caption{$\,$}\label{fig::5}
  \end{subfigure}
  \captionsetup{width=0.95\linewidth}
  \caption{The dual lattice manifold complex, with the notion of orientation and crack reflection symmetry highlighted. {\corr In  \subref{fig::4} the $\Mc^*$ complex is depicted }with $p$-cells in $\Mc^{*,+}$ in purple and $p$-cells in $\Mc^{*,-}$ in orange, together with highlighted cells: a positively oriented $0$-cell $a \in \Mc_0^{*,+}$ and its crack reflection $a'\in \Mc_0^{*,-}$ (with orientation swapped), two $1$-cells $e_1,e_2 \in \Mc_1^{*,+}$ and their crack reflections $e_1',e_2' \in \Mc_1^{*,-}$, a positively (anti-clockwise) oriented $2$-cell $E \in {\corr \Mc_2^{*,+}}$ and its crack reflection $E' \in {\corr \Mc_2^{*,-}}$, with orientation preserved. {\corr In  \subref{fig::2} the equivalent $\Wc^*$-description is shown, which is }obtained through $\omega_{\Mc}$ mapping. Corresponding $p$-cells and their crack reflections in the $\Wc^*$-complex with notation $\hat{e} \equiv \omega_{\Mc}(e)$.}
\label{fig:6}
\end{figure}

As on the original complex, for any $e^* \in \Mc_p^*$ there exists a unique $\omega_{\Mc}^{-1}(e^*) \in \Wc_p^*$, and thus the isomorphism $^*\,:\,\Mc_p \to \Mc_{p-2}^*$ is defined, for $e \in \Mc_p$ by
\[
e^*:= \omega_{\Mc}^{-1}\left(\omega_{\Mc}(e)^*\right),
\]
which entails that the discussion above applies to $\Mc^*$, with the crack reflection symmetry of $e^* \in \Mc_p$ being given by $(e^*)' = \omega_{\Mc}^{-1}\left(\omega_{\Mc}((e^*)')\right)$. The relevant visual depiction is given in Figure \ref{fig::4}.

The useful one-to-one correspondence discussed in \eqref{Mcp_pm} and \eqref{Wcp_pm} translates to the dual lattice manifold complex, thus if 
\begin{equation}\label{Mcp_pm_dual}
\Mc_p^{*,\pm} := \{ e \in \Mc^*_p\;|\; \chi(e) \subset \Mc^{\pm}\},
\end{equation}
where $\Mc^{\pm}$ was defined in \eqref{Mcpm}, then $(\Mc_p^{*,+})' = \Mc_p^{*,-}$, and likewise, if 
\begin{equation}\label{Wcp_pm_dual}
\Wc_p^{*,\pm} := \omega_{\Mc}^{-1}(\Mc_p^{*,\pm}),
\end{equation}
then $(\Wc_p^{*,+})' = \Wc_p^{*,-}$. 

Let $\Omega$ represent either the complex $\Wc$ or the complex $\Mc$. Any crack symmetry preserving function $f \in \Vsc_{\rm s}(\Omega_p)$ gives rise to a $f^*\,:\,\Omega_{p-2}^* \to \R$ through
\[
f^*(e^*) := f(e)
\]
and thanks to the way duality and reflection are defined, it is ensured that
\[
f^*((e^*)') = f^*((e')^*) = f(e') = f(e) = f^*(e^*),
\]
which implies that $f^* \in \Vsc_{\rm s}(\Omega_{p-2}^*)$. 

In fact by construction $^*\,:\, \Vsc_{\rm s}(\Omega_p) \to \Vsc_{\rm s}(\Omega_{2-p}^*)$ is an isomorphism and similarly $^*$ is also an isomorphic mapping between the Hilbert spaces $\mathscr{L}^2(\Omega_p)$ and $\mathscr{L}^2(\Omega_{2-p}^*)$.

It is further noted that {\corr the notion of} duality extends to the differential operator \linebreak ${\pmb{d}^*\,:\, \Vsc_{\rm s}(\Omega^*_p) \to \Vsc_{\rm s}(\Omega^*_{p+1})}$ and the co-differential operator $\pmb{\delta}^*\,:\,\Vsc_{\rm s}(\Omega^*_{p} \to \Vsc_{\rm s}(\Omega^*_{p-1})$, which are defined as 
\[
\pmb{d}^*f^*(e^*) := \int_{\partial^* e^*} f^* = \int_{\delta e} f = \pmb{\delta}f(e)\,\text{ and }\,\pmb{\delta}^*f^*(e^*)\int_{\delta^* e^*} f^* = \int_{\partial e} f = \pmb{d}f(e).
\]
\begin{remark}\label{rem:dual-dirchlet}
In the light of Remark \ref{rem-crack-sym}, for any $f^* \in \Vsc_{\rm s}(\Omega_p^*)$, as
a consequence of \eqref{Wc_w1},
\[
\forall e^* \in \Omega_p^*\;\text{ with } \chi(e^*) \cap \Gamma_0^{\Omega} \neq\emptyset,\;f^*(e^*) = 0. 
\]
Note that on the dual complex if $\chi(e^*) \cap \Gamma_0^{\Omega} \neq \emptyset$ then in fact $\chi(e^*) \subset \Gamma^{\Omega}_0$, thus what on the original lattice manifold complex is a {\it zero Neumann boundary restriction} arising from the crack, on the dual complex takes the form of a {\it zero Dirichlet boundary restriction}.
\end{remark}
The discrete Sobolev space on the dual lattice manifold respecting the boundary condition due to crack reflection symmetry is given by
\begin{equation}\label{Hcc-dual}
\Hcc(\Omega_0^*):= \{ u \in \Vsc_{\rm s}(\Omega_0^*)\;|\; \|\pmb{d}u\|_{\mathscr{L}^2(\Omega_1^*)} < \infty \}.
\end{equation}
There is no need for an additional requirement in relation to admissible  constant functions, since by definition any $u \in \Vsc_{\rm s}(\Omega_0^*)$ satisfies $u(e) = 0$ for any $e$ such that $\chi(e) \subset \Gamma_0^{\Omega}$. 

Finally, it is noted that the convenient notational abuses discussed in Remark \ref{not-abuse1} also apply to $\Wc^*$, $\Mc^*$ as well as $\Wc^*_0$ and $\Mc^*_0$. 
\subsection{Dislocation configurations in a cracked crystal}\label{sec:dis_conf}
In this section the concepts introduced in \cite{2014-dislift} about permissible dislocation configurations will be adapted to take an existing crack into account.

For any $y \in \Vscs(\Mc_0)$, the set of bond-length $1$-forms is defined as
\begin{equation}\label{bond-length}
[\pmb{d}y]:= \{\alpha \in \Vscs(\Mc_1)\,|\,\|\alpha\|_{\infty} \leq \tfrac{1}{2}\,\text{ and }\, \forall e \in \Mc_1,\,\alpha(e) - \pmb{d}y(e) \in \Z\}.
\end{equation}
This definition is motivated by the fact that any two displacements $y,\tilde{y} \in \Vscs(\Mc_0)$ such that, for all $e \in \Mc_0$, $y(e) - \tilde{y}(e) \in (\Z+C)$ for some constant $C \in \R$ represent a physically equivalent three-dimensional configuration which is reflected by $[\pmb{d}y] = [\pmb{d}\tilde{y}]$. 

A dislocation core is any positively oriented $2$-cell $e \in \Mc^+_2$ for which $\pmb{d}\alpha(e) = \int_{\partial e}\alpha \neq 0$. If $\mu \in \Vscs(\Mc_2)$ is such that its image is contained in $\{-1,0,1\}$, then $y \in \Vscs(\Mc_0)$ is regarded as a displacement containing dislocation configuration $\mu$ if 
\[
\exists\,\alpha \in [\pmb{d}y]\,\text{ such that }\, \pmb{d}\alpha  = \mu.
\]
Note that $\mu \in \Vscs(\Mc_2)$ represents the Burgers vectors \cite{hirth-lothe} associated with each $2$-cell.

{\corr For the subsequent results about existence of equilibria, the dislocations are required to satisfy a \emph{minimum separation property}, as first noted in Section~\ref{sec:out-res}.} A possible way of defining the set of admissible dislocation configurations {\corr satisfying this property} is as follows. Fix $\epsilon > 0$ and some $\pmb{b}=(b_i)_{i=1}^m$ with $b_i \in \{\pm 1\}$ representing the Burgers vector {\corr (in the present context determining the 'charge' of the dislocation, as discussed at length in \cite{hirth-lothe})}  and define the set of $2$-forms
\begin{align}
\mathscr{B}_n:= \Big\{&\mu = \sum_{i=1}^m b_i(\mathbb{1}_{e_i} + \mathbb{1}_{e_i'})\;|\; e_i \in \Mc^+_2,\label{Bn}\\
& {\corr \max\left\{D(e_i,e_j),D_{\Wc}(e_i,e_j)\right\} }\geq \epsilon n,\, \text{ for all } i,j \in \{1,\dots,m\},i\neq j  \Big\}.\nonumber
\end{align}
Here $\mathbb{1}_{e_i}$ denotes a $2$-form indicator function, which can more generally be defined as a $p$-form given by 
\begin{equation}\label{indicator_f}
\mathbb{1}_{e}(\tilde{e}):=\begin{cases} \pm 1&\quad \tilde{e} = \pm e,\\
0&\quad \text{otherwise}.
\end{cases}
\end{equation}
The fact that $e_i'$ appears in \eqref{Bn} is due to the crack reflection symmetry, since if $\mu \in \Vsc_{\rm s}(\Mc_2)$, then if $\mu(e) = \pm 1$, i.e. there is a dislocation core at a positively oriented $e$, then by construction there is a further dislocation core at a positively oriented $-e'$  with $\mu(-e') = \mp 1$. 

{\corr A rigorous definition of the distance functions $D(\cdot,\cdot)$ and  $D_{\Wc}(\cdot,\cdot)$} in \eqref{Bn} will be presented in the Appendix (Section~\ref{sec:distance}), as part of a broader discussion about measuring distances in the lattice manifold complex framework. {\corr In particular, Section \ref{sec:min-sep-prop} will be devoted to showing why the minimum separation property is posed as a maximum over two notions of a distance. Intuitively, for $e_i,e_j \subset \Mc^+$, $D(e_i,e_j)$ coincides with the Euclidean distance on $\R^2$, whereas $D_{\Wc}(e_i,e_j)$ is the Euclidean distance on $\R^2$ between $\omega_{\Mc}(e_i) \in \Wc_2^+$ and $\omega_{\Mc}(e_j) \in \Wc_2^+$.} As $n$ in the definition of $\mathscr{B}_n$ in \eqref{Bn} increases, so does the separation of each defect thus many of the subsequent results will hold for $n$ large enough. The reason why $\epsilon >0$ is fixed is to prepare the ground for a future study of upscaling \cite{BvM}, in which the lattice manifold will be rescaled by $\frac{1}{n}$, thus ensuring that in the limit two defects do not collapse onto one another.

A domain with interactions across the crack disregarded, as in the case of $\Mc$, does not, by itself, ensure that every resulting configuration has a crack opening along $\Gamma_0$ present. A set of admissible displacements containing a crack can be defined as 
\begin{equation}\label{Vcrack}
\mathscr{V}^{\rm crack}:= \{ y \in \Vscs(\Mc_0)\,|\,y\left(e_m^+\right) - y\left(e_m^-\right) \to \infty\,\text{ as }m \to \infty \},
\end{equation}
where
\[
e_m^{\pm} := \left(\left(-m - \frac{1}{2}, \pm \frac{1}{2}\right),1,1\right) \in \Mc_0
\]
A displacement containing a crack opening and a given permissible dislocation configuration is thus any $y \in \mathscr{V}^{\rm crack}$ for which there exists $\alpha \in [\pmb{d}y]$ such that $\pmb{d}\alpha \in \mathscr{B}_n$.
\section{The atomistic model}\label{sec:atom_model}
With the geometric and functional setup introduced, the atomistic model can now be discussed. Firstly, the notion of an atomistic energy difference will be presented, followed by a discussion about an appropriate far--field predictor which will ensure that the energy is well-defined over an appropriate function space. This will be followed by an explicit construction of dislocations-only equilibria in a cracked domain, which exploits the notion of duality for the lattice manifold complex. This construction will pave the way for the subsequent result about the existence of near-crack-tip plasticity equilibria. 
\subsection{Energy difference}\label{sec:energy}
With the underlying lattice manifold complex $\Mc$ introduced in Section \ref{sec:latmancom}, for a pair of displacements $y, \tilde{y} \in \Vscs(\Mc_0)$, the energy difference is defined to be
\begin{equation}\label{energy}
\E(y,\tilde{y}) := \int_{\Mc_1}\left(\psi(\pmb{d}y) - \psi(\pmb{d}\tilde{y})\right),
\end{equation}
where
\begin{equation}\label{psi}
\psi(r) = \frac{\lambda}{2}\dist(x,\Z)^2
\end{equation}
is a $1$-periodic quadratic pair potential with $\lambda > 0$. The $1$-periodicity of the potential reflects the fact that the underlying crystal is three-dimensional with each lattice site $e \in \Mc_0$ representing a column of atoms that are $1$-periodic in the direction perpendicular to the plane considered. This is further discussed near the definition of $[\pmb{d}y]$ in \eqref{bond-length}.

The crack reflection symmetry together with the fact that ${\psi(-r) = \psi(r)}$ ensures that 
\[
\int_{\Mc_1} \psi(\pmb{d}y) = 2 \int_{\Mc_1^+} \psi(\pmb{d}y),
\]
thus implying that the energy difference considered here is equivalent to the one considered in \cite{2013-disl,EOS2016, H17,2018-antiplanecrack,2019-antiplanecrack}.

It is clear that the energy difference in \eqref{energy} is well-defined if $y-\tilde{y} \in \Vsc_{\rm c}(\Mc_0)$ and a straightforward adjustment of results in \cite{2013-disl} ensures that, under certain conditions to be subsequently discussed, $\E$ can be extended by continuity {\corr to} displacements such as $y -\hat{y} \in \Hcc(\Mc_0)$ defined in \eqref{Hcc}.

{\corr Throughout the paper the primary interest lies in identifying (locally) stable equilibrium configurations, which can be defined in the following way.}
\begin{definition}\label{def:stability}
A displacement $y \in \Vsc_{\rm s}(\Mc_0)$ is a locally stable equilibrium if there exists $\epsilon > 0$ such that $\E(y + \tilde{y}, y) \geq 0$ for all $\tilde{y} \in \Vsc_{\rm c}(\Mc_0)$, defined in \eqref{p-form-comp}, with $\|\pmb{d} \tilde{y}\|_{\mathscr{L}^2(\Mc_0)} \leq \epsilon$.
\end{definition}
It follows from \cite[Lemma 3.3]{2013-disl} that under the potential given by $\eqref{psi}$, any locally stable equilibrium $y \in \Vsc_{\rm s}(\Mc_0)$ satisfies $\pmb{d}y(e) \not\in (\Z + \frac{1}{2})$ for all $e \in \Mc_1$ and thus any locally stable equilibrium $y$ further satisfies what is referred to in literature as {\it discrete ellipticity} or {\it strong stability}, namely 
\begin{equation}\label{strong-s}
\int_{\Mc_1} \psi''(\pmb{d}y)\,(\pmb{d}v)^2 \geq \lambda \|\pmb{d}v\|^2_{\mathscr{L}^2(\Mc_1)},\quad \forall v \in \Vsc_{\rm c}(\Mc_0).
\end{equation}
{\corr This property will prove useful in Section \ref{sec:existence_Knot0}, as it will ensure that the Implicit Function Theorem \cite{serge} is applicable.}  

As discussed in \cite[Section 3.1]{H17}, the periodicity of the potential $\psi$ implies that any single locally stable equilibrium gives rise to an entire family of equilibria, {\corr each differing only by the displacement of at least one atom jumping by an integer, or by a shift of displacements of all atoms by a constant.} This can be captured by defining an equivalence relation
\begin{equation}\label{equiv-rel}
y \sim \tilde{y} \iff\,\forall e \in \Mc_0,\quad y(e) - \tilde{y}(e) \in \left(\Z + C\right)\,\text{ for some } C \in \R.
\end{equation} 

{\corr The strategy employed to find locally stable equilibrium $y$ containing both a crack opening and dislocations concerns setting 
\begin{equation}\label{hatu_tildeu}
y = \hat{u} + u\quad\text{ and }\quad \tilde{y} = \hat{u},
\end{equation}
where $\hat{u}$ is a far-field predictor which should accurately capture the behaviour of the equilibrium configuration away from the crack defect core at the origin. On the other hand $u$ is constrained to lie in the space $\Hcc(\Mc_0)$ and is interpreted as an atomistic corrector.} 

The issue of deriving an appropriate $\hat{u}$ will be addressed now.

\subsection{Far-field prediction}\label{sec:cont_pred}
{\corr With the underlying domain of infinite size, the atomistic energy difference defined in \eqref{energy} can be used to model a crack opening provided that the predictor-corrector approach championed in \cite{2013-disl}, \cite{EOS2016} and \cite{2018-antiplanecrack} is employed.}

{\corr As noted in \cite{2018-antiplanecrack}}, in the case of a single Mode III anti-plane crack problem posed on $\R^2$ with the crack tip at the centre of the coordinate system and the crack surface given by $\Gamma_0$ in \eqref{Gamma0}, the predictor $\hat{u}_{\rm c}\,:\,\R^2\setminus \Gamma_0 \to \R$ is required to satisfy 
\begin{subequations}\label{PDE-c}
\begin{align}
-\Delta \hat{v}_{\rm c} = 0\,&\text{ in }\, \R^2\setminus \Gamma_0,\\
\nabla \hat{v}_{\rm c} \cdot \nu = 0\,&\text{ on }\,\Gamma_0\setminus\{(0,0)\}
\end{align}
\end{subequations}
and an explicit solution ensuring local integrability near the crack tip \cite{SJ12} is given in polar coordinates $x = (r\cos\theta,r\sin\theta)$ by
\begin{equation}\label{c-pred-simple}
K\hat{v}_{\rm c}(r,\theta):= K\sqrt{r}\sin\tfrac{\theta}{2},
\end{equation} 
which in fact coincides with the second component of the complex square root mapping $\omega$ from \eqref{om-map}. Here $K$ is the (rescaled) stress intensity factor and its sign determines the orientation of the crack, much like the sign of $b_i$ in \eqref{Bn} determines the charge of a dislocation. By convention and without loss of generality, it is assumed that $K \geq 0$. 

A lattice manifold equivalent of the canonical crack predictor $\hat{v}_{\rm c}$ can be defined as a $\hat{u}_{\rm c}\,:\,\Mc\setminus\{(0,0)\} \to \R$ given by
\begin{equation}\label{uhatcmc}
K\hat{u}_{\rm c}(k):=\begin{cases} &+K\hat{v}_{\rm c}(k_x)\quad\text{if }\, k_b = +1,\\\ &-K\hat{v}_{\rm c}(k_x)\quad\text{if }\, k_b = -1.
\end{cases}
\end{equation}
The crack reflection symmetry across the branches of the manifold ensures that, recalling Remark \ref{not-abuse2}, the definition in \eqref{uhatcmc} fully specifies a function  $\hat{u}_{\rm c} \in \Vscs(\Mc_0)$.

In the particular case of no screw dislocations present, corresponding to setting $m=0$ in the definition of admissible dislocation configurations in \eqref{Bn}, it has been established in \cite{2018-antiplanecrack}, for $\psi \in C^4(\R)$, that the mapping
\[
\Hcc(\Mc_0) \ni u \mapsto \E(K\hat{u}_{\rm c} + u,K\hat{u}_{\rm c}),
\]
which corresponds to setting $\hat{u}$ from \eqref{hatu_tildeu} to be $K\hat{u}_{\rm c}$, is well-defined and admits a locally unique minimiser $\bar{u}_{\rm c} \in \Hcc(\Mc_0)$ for the stress intensity factor $K$ in \eqref{c-pred-simple} small enough, thus giving rise to an equilibrium of the form
\begin{equation}\label{yc}
y_{\rm c}:= K\hat{u}_{\rm c} + \bar{u}_{\rm c},
\end{equation}
which is locally stable in the sense of Definition~\ref{def:stability}. As discussed in \cite[Remark 3.2]{2014-dislift} and also \cite[Lemma 3.3]{2013-disl}, global smoothness of $\psi$ is not in fact a necessary condition and it is straightforward to see that the same is true for $\psi$ defined in \eqref{psi}.

Interestingly, the fact that, for $K$ small enough, $\|\pmb{d}y_{\rm c}\|_{\infty} < \frac{1}{2}$, defined in \eqref{inf-norm}, is a direct consequence of linear dependence in $K$ of both $\hat{u}_{\rm c}$ and $\bar{u}_{\rm c}$ (\cite[Theorem 2.4]{2018-antiplanecrack}).

As was noted at the end of Section \ref{sec:dis_conf}, there is a crucial distinction to be made between modelling screw dislocations simply in a cracked lattice domain, that is with interactions across a crack surface disregarded, as is inherent in the lattice manifold complex setup, and modelling screw dislocations in the presence of an actual crack opening. The former case corresponds to setting the stress intensity factor $K$ in \eqref{c-pred-simple} to $K=0$, i.e. effectively setting $\hat{u}_{\rm c} \equiv 0$, and the latter to considering $K > 0$.
  
In what follows first the case $K=0$ is handled, which will be shown to permit an explicit construction of a locally stable atomistic equilibrium $y_{\mu} \in \Vsc_{\rm s}(\Mc_0)$ containing dislocation configuration $\mu \in \mathscr{B}_n$, thus circumventing the need to derive a corresponding far-field predictor. 

This can subsequently be used to assert existence of a locally stable {\it near-crack-tip plasticity} equilibrium in the case when $K > 0$, by setting $\hat{u} = K\hat{u}_{\rm c} + y_{\mu}$ in \eqref{hatu_tildeu}. 

\section{Main Results}\label{sec:main}
With the geometric and functional framework introduced and the atomistic model {\corr rigorously defined}, the main results of the paper will now be presented.

The first set of results concerns the notion of a Green's function on the dual lattice manifold complex, which is equivalent to a lattice Green's function in the crack geometry with zero Dirichlet boundary condition. The results proven concern existence of such a Green's function and a careful characterisation of its decay properties. 

This naturally leads to results establishing existence, uniqueness and an explicit construction of locally stable dislocation-only equilibrium configurations in a cracked domain (the case when the stress intensity factor $K=0$). This is achieved by exploiting the duality of the lattice manifold complex, which implies that the problem at hand is closely related to the notion of a Green's function on the dual lattice manifold.

Notably, highlighting the benefit of detailed treatment of the crack surface by means of the lattice manifold complex machinery, a usual restriction on the minimum separation distance between the dislocations and the boundary (the crack surface) is not needed.  

This lays foundation for the remaining results, which include asserting existence and uniqueness of locally stable {\it near-crack-tip plasticity equilibria} in which both a crack opening and a fixed number of dislocations is present, corresponding to the case when the stress intensity factor $K$ defined in \eqref{c-pred-simple} is strictly positive and the number of dislocations in the definition of $\mathscr{B}_n$ from \eqref{Bn} is $m > 0$. 

\subsection{Green's function on the dual lattice manifold complex}\label{sec:G}
In this section only the dual lattice manifold is considered, thus the $^*$-superscript is only kept when referring to the spaces of $p$-cells, but dropped when referring to individual $p$-cells, as well as $p$-forms and boundary, co-boundary, differential, and co-differential operators.

The following definition lays out the relevant notion of a Green's function.  
\begin{definition}\label{def:G}
A function $\G\,:\,\Mc_0^* \times \Mc_0^* \to \R$ is said to be a Green's function on the dual lattice manifold complex if it satisfies the following properties.
\begin{enumerate}
\item {\it Crack reflection symmetry:} for any $\tilde{e} \in \Mc^*_0$, 
\[
\G(\cdot,\tilde{e}) \in \Vsc_{\rm s}(\Mc_0^*).
\]
\item {\it Variable symmetry:} for any $e,\tilde{e} \in \Mc_0^*$, it holds that 
\[
\G(e,\tilde{e}) = \G(\tilde{e},e).
\]
\item {\it A fundamental solution:} if $\tilde{e} \subset \Mc^{\pm}$, then 
\[
\pmb{\Delta}\G(e,\tilde{e}) = \mathbb{1}_{\tilde{e}} + \mathbb{1}_{\tilde{e}'},
\]
where the Laplace operator defined in \eqref{def-hlaplace} is applied with respect to the first variable and the indicator function is defined in \eqref{indicator_f}.
\end{enumerate}
\end{definition}
It is noted that (3) is the defining property of any Green's function, with $\mathbb{1}_{\tilde{e}'}$ appearing due to condition (1), which is there to ensure that the said Green's function is relevant to the problem at hand. 

Furthermore, condition (1) by construction implies that if $e_0 \cap \Gamma_0^{\Mc} \neq \emptyset$ then $\G(e_0,\tilde{e}) = 0$, since, as explained in Section \ref{sec:dual_lmc}, in the dual complex such $0$-cells satisfy $e_0' = -e_0$, thus highlighting that $\G$ satisfies a zero Dirichlet boundary condition. The variable symmetry in condition (2) further implies that $\G(e,e_0) = 0$ for all $e \in \Mc_0^*$.

For a function in two variables, the notation $\pmb{d_j}$, $\pmb{\delta_j}$  and $\pmb{\Delta_j}$ is used to refer to, respectively, the differential operator, the co-differential operator and the discrete Laplace operator acting with respect to the $j$th variable. 

The main result of this section is as follows.
\begin{theorem}\label{thm:G}
\sloppy There exists a Green's function on the dual lattice manifold complex ${\G\,:\,\Mc_0^*\times \Mc_0^* \to \R}$ satisfying Definition \ref{def:G}, such that for any $e \in \Mc_1^*$ and $\tilde{e} \in \Mc_0^*$,
\begin{equation}\label{thm:G-1}
|\pmb{d_1}{\G}(e,\tilde{e})| \lesssim \left(1+(1+{\corr |e|_{\Wc}})\,{\corr D_{\Wc}(e,\tilde{e})}\right)^{-1},
\end{equation}
where $|\cdot|_{\corr \Wc}$ and $D_{\corr \Wc}(\cdot,\cdot)$ are functions measuring distances in the lattice manifold complex $\Wc^*$, as discussed in Section \ref{sec:distance}.

There further exists $\epsilon > 0$, independent of $\tilde{e}$, such that 
\begin{equation}\label{thm:G-2}
\|\pmb{d_1}\G(\cdot,\tilde{e})\|_{\infty} < \frac{1}{2} - \epsilon,
\end{equation}
with $\|\cdot\|_{\infty}$ defined in \eqref{inf-norm}.
\end{theorem}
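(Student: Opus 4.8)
\emph{Reduction and existence.} The plan is to exploit the duality of Section~\ref{sec:dual_lmc}: by Remark~\ref{rem:dual-dirchlet}, the crack reflection symmetry demanded in condition~(1) of Definition~\ref{def:G} is equivalent to a homogeneous Dirichlet condition along the crack. Working in the $\Wc^*$-description, where $\omega_{\Mc}$ straightens the crack into the full line $\Gamma_0^{\Wc}=\{w_1=0\}$, a crack-symmetric $\G(\cdot,\tilde e)$ must vanish there because $e'=-e$ forces $\G(e,\tilde e)=-\G(e,\tilde e)$; together with the symmetric source this decouples the branches $\Wc^{*,+}$ and $\Wc^{*,-}$ and reduces matters to a single half-plane Dirichlet Green's function carrying the source $\mathbb{1}_{\tilde e}$. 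Unlike the recurrent free lattice, the simple random walk in this configuration is absorbed on $\Gamma_0^{\Wc}$ almost surely, so the expected number of visits is finite and the Green's function is well-defined and decaying. I would realise it as the limit of the finite-domain Dirichlet Green's functions on $\Mc_0^*\cap B_R$, the decay estimates below furnishing convergence; self-adjointness of the Dirichlet Laplacian yields the variable symmetry~(2), and extending the construction by crack reflection gives~(1).

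\emph{Decay estimate.} For \eqref{thm:G-1} the key device is the conformal change of variables. In the $\Mc^*$-description the lattice is the undistorted $\Z^2$ on each sheet, so I would compare $\G$ with the continuum Green's function $G_{\rm c}$ obtained by pulling back the explicit half-plane kernel $\tfrac{1}{2\pi}\log\big(|w-\tilde w^{*}|/|w-\tilde w|\big)$ through $\omega$, where $\tilde w^{*}$ is the reflection of $\tilde w=\omega_{\Mc}(\tilde e)$ across $\Gamma_0^{\Wc}$. Standard discrete-to-continuum estimates on $\Z^2$ then transfer gradient bounds to the nearest-neighbour difference $\pmb{d_1}\G\sim|\nabla_w G_{\rm c}|$ times the edge length. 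Since a unit edge in the $\Mc^*$-lattice has $\Wc$-image of length $\sim|e|_{\Wc}^{-1}$ (the modulus of $D\omega$), while $|\nabla_w G_{\rm c}|\lesssim|w-\tilde w|^{-1}=D_{\Wc}(e,\tilde e)^{-1}$ (the image term only improving matters), the chain rule produces precisely the factor $\big((1+|e|_{\Wc})\,D_{\Wc}(e,\tilde e)\big)^{-1}$, with the regularising constants absorbing the source and the tip.

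\emph{Uniform bound.} The bound \eqref{thm:G-2} is a sharper local statement that \eqref{thm:G-1} does not supply near the source, where $D_{\Wc}$ is of the order of the local spacing. Here I would use that at the source the defining relation reads $\sum_{\rho}\big(\G(\tilde e,\tilde e)-\G(\tilde e-\rho,\tilde e)\big)=1$ with all summands positive by the discrete maximum principle. Away from the tip the lattice is asymptotically the undistorted $\Z^2$, so each difference is a small perturbation of the free value $\tfrac14$, the reflection contribution carrying a definite sign that does not spoil the bound; thus each stays below $\tfrac12-\epsilon$ uniformly in $\tilde e$.

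\emph{Main obstacle.} I expect the genuinely delicate cases to be those in which the source sits next to the crack surface or the crack tip --- exactly the configurations for which no minimum separation is assumed, and which underpin the headline feature of the theorem. There the continuum comparison degenerates: near the tip the map $\omega$ is singular and the two sheets branch, and adjacent to the crack the image point $\tilde w^{*}$ collides with $\tilde w$, so both \eqref{thm:G-1} and especially the $\tfrac12-\epsilon$ margin in \eqref{thm:G-2} will require direct discrete maximum-principle arguments on a bounded region, uniform in $\tilde e$, in place of the global conformal comparison used elsewhere.
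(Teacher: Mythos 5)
Your overall architecture for existence and for the decay estimate \eqref{thm:G-1} is essentially the paper's: the continuum comparison function is exactly the pullback of the half-plane kernel through $\omega$ (the paper's $\hat{\G}$), and your conformal-factor computation correctly reproduces the rate $\big((1+|e|_{\Wc})\,D_{\Wc}(e,\tilde e)\big)^{-1}$. Two remarks, though. First, the paper builds the corrector $\bar{\G}=\G-\hat{\G}$ variationally (Lax--Milgram on $\Hcc(\Mc_0^*)$) rather than by exhausting with finite Dirichlet problems; your route is viable, but the phrase ``standard discrete-to-continuum estimates'' hides the genuinely hard step, namely showing that the atomistic corrector decays no slower than the predictor. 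In the paper this requires a suboptimal bound on the mixed difference $\pmb{d_1}\pmb{d_2}\G$ followed by a bootstrapping argument; it is not a routine consistency estimate, because the comparison function is singular at the tip and, in the undistorted $\Mc^*$-coordinates where the discrete Laplacian is the standard one, the Dirichlet set is a half-line rather than the full line your half-plane reduction suggests.

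The genuine gap is in the supremum bound \eqref{thm:G-2}. Positivity of the four differences at the source summing to $1$ gives only the trivial bound $1$, and the claim that each difference is ``a small perturbation of the free value $\tfrac14$'' fails exactly in the configurations you yourself flag as delicate: when the source $\tilde e$ is adjacent to the crack, one neighbour lies on $\Gamma_0$ where $\G$ vanishes, so the corresponding difference equals $\G(\tilde e,\tilde e)$ itself --- nothing close to $\tfrac14$ a priori. Your fallback of ``direct discrete maximum-principle arguments on a bounded region'' cannot close this either, because such configurations occur at every point along the infinite crack, so there is no compactness to exploit and no obvious source of a uniform $\epsilon$. The paper's mechanism, absent from your proposal, is the decomposition $G=G^{\rm h}+G^{\rm c}$ into the translation-invariant full-lattice Green's function (whose nearest-neighbour differences are bounded by exactly $\tfrac14$, attained at the source) and a discrete-harmonic correction with boundary data $-G^{\rm h}$ on $\Gamma_0$; the discrete maximum principle then bounds the differences of $G^{\rm c}$ by explicit values of the square-lattice Green's function (the constants $\tfrac1\pi-\tfrac14$ and $\tfrac34-\tfrac2\pi$), uniformly in the source position, and the sum stays strictly below $\tfrac12$. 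Without this, or an equivalent quantitative input, the margin $\tfrac12-\epsilon$ is not established.
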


The proof will be presented in Section \ref{sec:G-proof}. Note that it is  \eqref{thm:G-2} that ensures that $\pmb{d_1}\G$, through duality, gives rise to an equilibrium strain field around a dislocation.

Aside from the relevance of $\G$ for the problem of atomistic near-crack-tip plasticity, it is noted that, with $\G$ fully determined by the values it takes in $\Mc_0^{*,+} \times \Mc_0^{*,+}$ and, since $\chi(\Mc_0^{*,+}) = \Z^2$, the dual lattice manifold Green's function is equivalent to $G\,:\,\Z^2\times \Z^2 \to \R$ {\corr introduced in \eqref{G-normal-eqns}.} Thus results presented in Theorem \ref{thm:G} are also of independent interest. 
\subsection{Equilibrium dislocation configurations when $K=0$}\label{sec:existence_k0}
In this section dislocations in a cracked domain with no crack opening present will be considered.

The approach employed is motivated by the lattice complex duality which was used in \cite{H17} to prove existence and uniqueness, up to equivalence discussed in \eqref{equiv-rel}, of locally stable equilibria containing a given dislocation configuration in a finite homogeneous crystal, together with a precise representation of the associated bond-length 1-form.

The results presented here achieves a similar feat in the context of a lattice manifold complex representing a cracked lattice domain and heavily rely on Theorem \ref{thm:G}.

Prior to stating the result, it is recalled that in the definition of admissible dislocation configurations $\mathscr{B}_n$ given in \eqref{Bn} the constant  $\epsilon >0$ is fixed, together with a collection of $(b_i)^m_{i=1}$, where $b_i = \pm 1$ and $m \in \N$. Every $\mu \in \mathscr{B}_n$ admits a decomposition ${\mu = \sum_{i=1}^m b_i(\mathbb{1}_{e_i} + \mathbb{1}_{e_i'})}$, where each $e_i \in \Mc_2^{+}$ and each $e_i' \in \Mc_2^{-}$ is its crack reflection, as discussed in Section \ref{sec:latmancom}. Finally, recall that the notion of duality mapping $^*$ is discussed Section \ref{sec:dual_lmc}.
\begin{theorem}\label{thm:K0}
For all $n \in \N$ in the definition of $\mathscr{B}_n$ sufficiently large and for every 2-form $\mu \in \mathscr{B}_n$, there exists a corresponding locally stable equilibrium configuration $y_{\mu} \in \mathscr{V}_{\rm s}(\Mc_0)$ with $\alpha \in [\pmb{d}y_{\mu}]$ such that $\pmb{d}\alpha = \mu$ and $\pmb{\delta}\alpha = 0$. Furthermore, $\alpha^* = \pmb{d}^* \G_{\mu^*}$, with  $\G_{\mu^*} \in \Vsc_{\rm s}(\Mc_0^*)$ given by 
\begin{equation}\label{Gmustar}
\G_{\mu^*}(e) := \sum_{i=1}^m b_i \G(e,e_i^*),
\end{equation}
where $\G(\cdot,e_i^*)$ is the Green's function on the dual lattice manifold complex from Theorem~\ref{thm:G}.
\end{theorem}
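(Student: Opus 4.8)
The plan is to build the bond-length $1$-form directly on the dual complex, where the dislocation configuration $\mu$ becomes a sum of point sources, and then to transport everything back to $\Mc$ and integrate. Concretely, I would set $\G_{\mu^*} := \sum_{i=1}^m b_i\,\G(\cdot,e_i^*) \in \Vsc_{\rm s}(\Mc_0^*)$ as in \eqref{Gmustar}, define $\alpha^* := \pmb{d}^*\G_{\mu^*} \in \Vsc_{\rm s}(\Mc_1^*)$, and let $\alpha \in \Vsc_{\rm s}(\Mc_1)$ be its image under the duality isomorphism $^*$. Crack symmetry of $\alpha$ is then automatic, since each $\G(\cdot,e_i^*)$ lies in $\Vsc_{\rm s}(\Mc_0^*)$ by property (1) of Definition~\ref{def:G} and $\pmb{d}^*$ preserves $\Vsc_{\rm s}$. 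The two differential identities are purely algebraic. Using the operator duality of Section~\ref{sec:dual_lmc} in the form $(\pmb{d}\alpha)^* = \pmb{\delta}^*\alpha^*$, together with property (3) of Definition~\ref{def:G} applied term by term (recalling $(e_i^*)' = (e_i')^*$), one obtains
\[
(\pmb{d}\alpha)^* = \pmb{\delta}^*\pmb{d}^*\G_{\mu^*} = \pmb{\Delta}^*\G_{\mu^*} = \sum_{i=1}^m b_i\big(\mathbb{1}_{e_i^*} + \mathbb{1}_{(e_i')^*}\big) = \mu^*,
\]
so that $\pmb{d}\alpha = \mu$; similarly $(\pmb{\delta}\alpha)^* = \pmb{d}^*\alpha^* = \pmb{d}^*\pmb{d}^*\G_{\mu^*} = 0$ because $\pmb{d}^*\pmb{d}^* = 0$ on any CW complex, whence $\pmb{\delta}\alpha = 0$.

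The heart of the proof, and the step I expect to be the main obstacle, is the uniform bound $\|\alpha\|_\infty < \tfrac12$, which is precisely what places $\alpha$ in a bond-length set \eqref{bond-length} and later secures stability. Writing $\alpha(e) = \alpha^*(e^*) = \sum_i b_i\,\pmb{d_1}\G(e,e_i^*)$, I would fix a bond $e$, single out the core $e_j$ closest to it, and split
\[
|\alpha(e)| \le |\pmb{d_1}\G(e,e_j^*)| + \sum_{i\neq j}|\pmb{d_1}\G(e,e_i^*)|.
\]
The first term is $<\tfrac12 - \epsilon$ uniformly in $e$ by \eqref{thm:G-2}. For the remaining $m-1$ terms I would feed the separation built into $\mathscr{B}_n$ via \eqref{Bn} into the decay estimate \eqref{thm:G-1}: the weight $(1+|e|_{\Wc})$ in the denominator of \eqref{thm:G-1} compensates for the square-root compression of $\omega_{\Mc}$, so that $(1+|e|_{\Wc})\,D_{\Wc}(e,e_i^*)$ is bounded below by the \emph{larger} of the Euclidean distance $D$ and the $\Wc$-distance $D_{\Wc}$ between $e$ and $e_i$ — which is exactly the quantity the $\max$ in \eqref{Bn} forces to exceed $\epsilon n$. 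This gives $\sum_{i\neq j}|\pmb{d_1}\G(e,e_i^*)| \lesssim (m-1)(1 + c\,\epsilon n)^{-1}$, below $\epsilon$ once $n$ is large, so that $|\alpha(e)| < \tfrac12$ for every bond $e$. Making this comparison rigorous, and in particular reconciling the two metrics, is where the real work lies and depends on the distance analysis of Section~\ref{sec:distance}.

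It remains to recover $y_\mu$ and to verify stability. Since $\pmb{d}\alpha = \mu$ is $\{-1,0,1\}$-valued, I would choose an integer-valued $1$-form $\beta$ with $\pmb{d}\beta = \mu$ (a choice of branch cuts joining the cores); then $\alpha - \beta$ is closed and, as $\Wc = \R^2$ is simply connected, exact, yielding $y_\mu \in \Vsc_{\rm s}(\Mc_0)$ with $\pmb{d}y_\mu = \alpha - \beta$ and hence $\alpha \in [\pmb{d}y_\mu]$, exactly as in \cite{H17}. Finally, $\|\pmb{d}y_\mu\|_\infty = \|\alpha\|_\infty < \tfrac12$ keeps every bond strain strictly away from $\Z + \tfrac12$, so the potential \eqref{psi} is locally smooth along $y_\mu$ with $\psi'(\pmb{d}y_\mu) = \lambda\alpha$; the first variation of $\E$ then equals $\lambda(\alpha,\pmb{d}v) = \lambda(\pmb{\delta}\alpha,v) = 0$ by the integration-by-parts formula \eqref{def-int-parts} and $\pmb{\delta}\alpha = 0$, so $y_\mu$ is an equilibrium, while the strong stability \eqref{strong-s} — available precisely because the strains avoid $\Z+\tfrac12$, cf.\ \cite[Lemma 3.3]{2013-disl} — upgrades this to local stability in the sense of Definition~\ref{def:stability}.
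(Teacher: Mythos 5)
Your proposal is correct and follows essentially the same route as the paper: construct $\alpha^* = \pmb{d}^*\G_{\mu^*}$ on the dual complex, verify $\pmb{d}\alpha = \mu$ and $\pmb{\delta}\alpha = 0$ via operator duality and Definition~\ref{def:G}(3), obtain $\|\alpha\|_\infty < \tfrac12$ by isolating the nearest core (bounded by \eqref{thm:G-2}) and controlling the remaining $m-1$ terms through the separation in \eqref{Bn} combined with \eqref{thm:G-1}, and finally integrate using simple-connectedness as in \cite[Section 4.5]{H17}. The only differences are presentational — you build $\alpha$ forward rather than first deriving $\pmb{\delta}\alpha = 0$ as a necessary condition of equilibrium, and you correctly flag that reconciling $D$ and $D_{\Wc}$ is the genuinely technical step, which the paper relegates to the three-case lemma in Section~\ref{sec:min-sep-prop}.
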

The proof requires several steps with a detailed account given in  Section \ref{p-K0}. The essence of the argument rests on the observation that, subject to proving technical properties  \eqref{thm:G-1}, \eqref{thm:G-2} in  Theorem \ref{thm:G} and thanks to the the notion of duality being preserved, the strain fields around dislocations on the primal lattice manifold complex are equivalent to the strain fields around source points on its dual. 
\subsection{Near-crack-tip plasticity equilibrium configurations (the case $K > 0$)}\label{sec:existence_Knot0}
With the dislocations-only equilibria proven to exist and given an explicit characterisation in Section \ref{sec:existence_k0}, the more physically relevant case when an actual crack opening is present in the crystal will now be considered.

As was discussed in Section \ref{sec:cont_pred}, \cite[Theorem 2.3]{2018-antiplanecrack} asserts the existence of a locally stable equilibrium $y_{\rm c}$ in the case when $K >0$ is small enough and no dislocations are present (corresponding to setting $m$ in the definition of $\mathscr{B}_n$ in \eqref{Bn} to $m=0$). The proof of this result rests on the fact that a homogeneous crystal ($K=0$ and $m=0$) is a locally stable equilibrium exhibiting discrete ellipticity defined in \eqref{strong-s} and hence a standard application of the Implicit Function Theorem \cite{serge} yields existence of an equilibrium for $K$ sufficiently small. 

A similar approach that additionally takes correctly into account the issues around the differentiability of $\psi$ can be employed to prove the following. 
\begin{theorem}\label{thm:Knot0}
For all $n \in \N$ sufficiently large and for every $\mu \in \B_n$, there exists a locally unique and locally stable equilibrium  $y^{\rm c}_{\mu} \in \Vsc^{\rm crack}$ (space defined in \eqref{Vcrack}) containing the dislocation configuration $\mu$. In particular, it can be decomposed as 
\[
y^{\rm c}_{\mu}(K) = K\hat{u}_{\rm c} + y_{\mu} + \bar{u},
\]
where $\hat{u}_{\rm c}$ is defined in \eqref{uhatcmc}, $K>0$ is the stress intensity factor, $y_{\mu}$ is the locally stable equilibrium dislocation configuration whose existence was asserted in Theorem \ref{thm:K0} and ${\bar{u} \in \Hcc(\Mc_0)}$ is an atomistic correction. 
\end{theorem}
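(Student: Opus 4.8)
The plan is to realise $y^{\rm c}_\mu$ as a solution branch of the equilibrium equation obtained by applying the Implicit Function Theorem \cite{serge} around the dislocations-only equilibrium $y_\mu$ furnished by Theorem~\ref{thm:K0}. Following the predictor--corrector strategy of \eqref{hatu_tildeu}, I fix the predictor $\hat u = K\hat u_{\rm c}+y_\mu$ and seek a corrector $u\in\Hcc(\Mc_0)$ making $K\hat u_{\rm c}+y_\mu+u$ a critical point of $\E(\,\cdot\,,\hat u)$. This amounts to solving $F(K,u)=0$, where $F\colon\R\times\Hcc(\Mc_0)\to\Hcc(\Mc_0)^\prime$ is defined through its action on test forms $v\in\Vsc_{\rm c}(\Mc_0)$ by
\[
\langle F(K,u),v\rangle:=\int_{\Mc_1}\psi'\big(\pmb d(K\hat u_{\rm c}+y_\mu+u)\big)\,\pmb d v.
\]
At the base point $F(0,0)=0$, since $\hat u_{\rm c}$ enters linearly and $y_\mu$ is an equilibrium by Theorem~\ref{thm:K0}; moreover $\|\pmb d(K\hat u_{\rm c})\|_\infty\to0$ as $K\to0$ because the strain of $\hat u_{\rm c}$ decays like $r^{-1/2}$ away from the tip (cf.\ \cite[Theorem~2.4]{2018-antiplanecrack}).

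The principal obstacle, flagged in the statement, is that $\psi$ in \eqref{psi} is only piecewise quadratic, with kinks at $\Z+\tfrac12$, so $F$ is a priori not differentiable. I would dispose of this exactly as in \cite[Remark~3.2]{2014-dislift} and \cite[Lemma~3.3]{2013-disl}: since $y_\mu$ is locally stable, its bond strains satisfy $\pmb d y_\mu(e)\notin(\Z+\tfrac12)$ for every $e\in\Mc_1$, hence stay uniformly bounded away from the non-smooth set. Together with the uniform smallness of $\|\pmb d(K\hat u_{\rm c})\|_\infty$ for small $K$, this guarantees that for $(K,u)$ in a neighbourhood of $(0,0)$ in $\R\times\Hcc(\Mc_0)$ the total strain $\pmb d(K\hat u_{\rm c}+y_\mu+u)$ avoids $\Z+\tfrac12$ entirely. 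On that neighbourhood $\psi$ is smooth along the relevant strains, so $F$ is continuously Fr\'echet differentiable and the Implicit Function Theorem is legitimately applicable.

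The partial derivative in the corrector variable is
\[
\langle D_u F(0,0)\,w,v\rangle=\int_{\Mc_1}\psi''(\pmb d y_\mu)\,\pmb d w\,\pmb d v=\lambda\,(\pmb d w,\pmb d v),
\]
the last equality because $\psi''\equiv\lambda$ off $\Z+\tfrac12$. This is precisely the strong-stability bilinear form \eqref{strong-s}, which is coercive on $\Hcc(\Mc_0)$ in the $\|\pmb d\,\cdot\,\|_{\mathscr L^2(\Mc_1)}$ norm; by Lax--Milgram $D_u F(0,0)$ is an isomorphism. The Implicit Function Theorem then yields $\delta>0$ and a unique $C^1$ branch $(0,\delta)\ni K\mapsto\bar u(K)\in\Hcc(\Mc_0)$ with $\bar u(0)=0$ and $F(K,\bar u(K))=0$, producing the asserted decomposition $y^{\rm c}_\mu(K)=K\hat u_{\rm c}+y_\mu+\bar u$ together with its local uniqueness.

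It remains to check the three qualitative features. Local stability for small $K$ is inherited by continuity: the second variation at $y^{\rm c}_\mu$ is a small perturbation of \eqref{strong-s} and hence stays positive, so Definition~\ref{def:stability} holds. That the dislocation content remains $\mu$ is topological: since the strains never cross $\Z+\tfrac12$ along the homotopy in $K$, the class $[\pmb d y^{\rm c}_\mu]$ is unchanged, so there is $\alpha\in[\pmb d y^{\rm c}_\mu]$ with $\pmb d\alpha=\mu$. Finally, membership in $\Vsc^{\rm crack}$ follows by evaluating at $e_m^\pm$: the predictor contributes $K\hat v_{\rm c}(-m-\tfrac12,\tfrac12)-K\hat v_{\rm c}(-m-\tfrac12,-\tfrac12)\sim 2K\sqrt m$, the two terms having opposite signs because $\sin(\theta/2)$ flips sign across the branch cut, while $y_\mu$ and the finite-Dirichlet-energy corrector $\bar u$ contribute only $o(\sqrt m)$ (bounded via Cauchy--Schwarz along a path of length $O(m)$ through the tail of $\|\pmb d\bar u\|_{\mathscr L^2}$), so the difference diverges. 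The hardest part is the differentiability reduction of the second paragraph, on which the entire Implicit Function Theorem argument hinges.
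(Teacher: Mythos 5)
Your overall strategy is exactly the paper's: set up $F=\delta_u\mathscr{E}$ with predictor $K\hat u_{\rm c}+y_\mu$, use the fact that strains of the locally stable $y_\mu$ avoid $\Z+\tfrac12$ to restore smoothness of $\psi$ on a neighbourhood, identify $D_uF(0,0)$ with the coercive form \eqref{strong-s}, and invoke the Implicit Function Theorem; the stability, dislocation-content and crack-opening checks at the end also match the paper's (the paper bounds the corrector via $|v(e)|\lesssim\log|e|$ rather than your path/Cauchy--Schwarz argument, but both work).

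There is, however, one genuine gap: you never verify that $F(K,u)$ actually lands in $\Hcc(\Mc_0)'$ for $K\neq 0$, i.e.\ that $v\mapsto\langle F(K,u),v\rangle$ is bounded in $\|\pmb d v\|_{\mathscr L^2(\Mc_1)}$. After using $\pmb{\delta}\alpha=0$ and Cauchy--Schwarz on the $\pmb d u$ term, the problematic contribution is $K\int_{\Mc_1}\pmb d\hat u_{\rm c}\,\pmb d v$, and this \emph{cannot} be controlled by Cauchy--Schwarz because $\pmb d\hat u_{\rm c}\sim|e|^{-1/2}$ is not in $\mathscr L^2(\Mc_1)$ (the squared sum diverges logarithmically in two dimensions). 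The paper devotes the first substantive step of its proof to precisely this point: it invokes the argument of \cite[Theorem 2.3]{2018-antiplanecrack}, which exploits that $\hat v_{\rm c}$ solves \eqref{PDE-c}, so that after summation by parts one is left with a consistency residual decaying fast enough (roughly $|e|^{-5/2}$) to pair against the at-most-logarithmic growth of $v\in\Hcc(\Mc_0)$. Without this estimate your map $F\colon\R\times\Hcc(\Mc_0)\to\Hcc(\Mc_0)'$ is not well defined off the slice $K=0$, and the continuous Fr\'echet differentiability of $F$ in $K$ (which reduces to boundedness of the same linear functional, since $\psi$ is quadratic away from the kinks) fails with it; so the Implicit Function Theorem cannot be applied as stated. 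The fix is exactly the cited residual/quadrature argument, and with it inserted your proof goes through and coincides with the paper's.
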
 

\section{Conclusions {\corr and further research directions} }\label{sec:conclusions}
In this paper the framework of a lattice manifold complex was developed and shown to enable a mathematically rigorous study of near-crack-tip plasticity in the case of a square lattice under anti-plane kinematics. In particular, it extends the earlier work on the mathematics of atomistic plasticity presented in \cite{AO05,H17} and forms a basis for several avenues of further research to be discussed now.
\subsection*{Upscaling and the resulting mesoscopic study}  $\,$
Given the precise description of near-crack-plasticity equilibria provided in Section \ref{sec:existence_Knot0}, a rigorous study of spatial upscaling of the model is a clear future research direction.

The basic premise of such a study is to consider a rescaled lattice manifold complex, denoted by $\left(\frac{1}{n}\Mc\right)$, obtained by applying the procedure of Section \ref{sec:latmancom} to a rescaled lattice manifold $\frac{1}{n}\Mc_0$. It readily follows from the discussion near the definition of the set of admissible dislocation configurations $\mathscr{B}_n$ in \eqref{Bn}, that any dislocation configuration ${\mu_n \in \mathscr{B}_n}$ gives rise to a rescaled dislocation configuration $\tilde{\mu}_n \in \left(\frac{1}{n}\Mc\right)_2$, with an associated collection of dislocation cores $(x_1,\dots,x_m) \subset \R^2$, where necessarily $|x_i-x_j| \geq \epsilon$. Due to Theorem \ref{thm:Knot0}, one could then define
\[
\mathscr{E}_n(K,x_1,\dots,x_m):= \E(y^{\rm c}_{\mu_n}(K),K\hat{u}_{\rm c} + \hat{u}_{\mu_n}),
\]
where $\hat{u}_{\mu_n}$ is a suitably defined function capturing far-field behaviour due to dislocation configuration $\mu_n$, and subsequently study a suitable limit of $\mathscr{E}_n(K,x_1,\dots,x_m)$ as $n \to \infty$. 

In the light of corresponding results in the dislocations-only case \cite{ADLGP14,H17}, and given the persisting duality in the near-crack-tip plasticity setup, enabling a description of equilibrium strain fields through a Green's function on the dual lattice manifold complex, it seems clear that the limiting energy is of the form 
\begin{equation}\label{renom-energy}
\mathscr{E}(K,x_1,\dots,x_m) = K\sum_{i=1}^m \sqrt{|x_i|}\cos\left(\tfrac{\theta_{x_i}}{2}\right) + \sum_{i\neq j}b_i\,b_j G(x_i,x_j),
\end{equation}
where $\theta_{x_i}$ is the angle in the polar representation of $x_i$ and $G(x_i,x_j)$ is the (possibly rescaled) continuum Green's function in a crack geometry with zero Dirichlet boundary condition on the crack surface and $b_i$ is the Burgers vector associated with the $i$th dislocation core. Establishing a rigorous connection between $\mathscr{E}_n$ and $\mathscr{E}$ remains an open problem to be addressed. 

The renormalised energy in \eqref{renom-energy} is also a starting point for a potential mesoscopic study of dislocations, modelled as points $x_1,\dots,x_m$ in space, interacting with each other in the vicinity of a crack tip. A further upscaling regime to explore here is the many dislocations limit when $m \to \infty$, which is customarily handled in the framework of $\Gamma$-convergence. An energy form similar to \eqref{renom-energy} is for instance considered in a recent work in \cite{GvMPS20}. 

A particularly interesting modelling question to be explored is whether one can identify a regime in which the emergence of an experimentally verified {\it plastic-free zone} \cite{horton1982tem} can be established. In the many dislocations limit this would correspond to proving that an equilibrated dislocation density has support bounded away from the crack tip. 
\subsection*{Interplay between the magnitude of the stress intensity factor and the location and charge of the screw dislocations} $\,$
One of the key ideas of fracture mechanics is that there exists some critical stress intensity factor $K_c$, such that if $K > K_c$ then it is energetically favourable for the crack to propagate \cite{SJ12}. In atomistic fracture the picture is further complicated by the phenomenon of lattice trapping \cite{Thomson_1971}, which refers to there existing a range of values for $K$ for which the crack remains locally stable despite being above or below the critical $K_c$ -- this is discussed {\corr in a mathematically rigorous fashion} in \cite{2019-antiplanecrack}. 

Plastic deformations around the crack tip are known to influence the situation further and could contribute to shielding the material from further crack propagation \cite{majumdar1981crack,majumdar1983griffith,ZYLS10}. In the atomistic description this is related to the influence a dislocation has on the strain on bonds closest to the crack tip.  Depending on the sign of the stress intensity factor and the Burgers vector, it can either decrease this strain, shielding the material from further crack propagation, or increase the strain, thus assisting bond breaking. 

In the present work this is most pronounced in Theorem \ref{thm:G}, which by duality describes a strain field around a dislocation core. In particular, the strain on the atomistic bonds closest to the crack tip due a dislocation can be estimated by combining \eqref{thm:G-1} and \eqref{thm:G-2}. This paves the way to a future study of the interplay between the magnitude of the stress intensity factor and the location and charge of the screw dislocations.

At present the crack tip is fixed at the origin and the interactions across the crack manually removed and hence an open problem to be addressed here is to extend the framework to include a physically motivated and mathematically sound bond-breaking mechanism, thus allowing the crack tip to move.
\subsection*{A new approach for non-convex domains}$\,$
A cracked crystal considered in this study represents an extreme instance of a non-convex domain, with atoms at the crack surface not interacting with each other despite their physical proximity in the reference configuration, which{\corr , to some extent,} is representative of any non-convex domain with corners. The resulting issues are generally difficult to handle, leading to the exclusion of such domains from the analysis presented \cite{H17}.

The framework of the lattice manifold complex addresses this non-convexity by means of a suitable conformal mapping, which maps the square lattice onto a distorted half-space lattice. This suggests a general strategy of treating non-convex domains with corners, which centres around finding a suitable conformal mapping which straightens the corners. It would be interesting to explore the practical usefulness of such an approach, when applied to, for instance, a general non-convex lattice polygon.  
\subsection*{More general near-crack-tip plasticity setups}$\,$
The present study is restricted to a square lattice with anti-plane displacements and nearest neighbour atom interactions under a pair-potential function. As, outlined in \cite[Section 3.]{2018-antiplanecrack}, there remain significant technical obstacles, at present preventing going beyond this regime. They are mostly related to the phenomenon of surface relaxation, as studied in \cite{theilsurface11}, which is notoriously difficult to address mathematically. 

While the specific form of the pair potential $\psi$ defined in \eqref{psi} ensures an explicit construction of dislocations-only equilibria in Theorem \ref{thm:K0}, it remains feasible to study atomistic near-crack-tip plasticity under other reasonable pair potentials, as long as one derives an accurate far-field screw dislocation predictor for a cracked domain, which is possible using the complex square root mapping and a reflection argument. The resulting analysis would then proceed along the lines of \cite{2014-dislift}.

Extending the present work to such more general setups constitute a clear future research direction.
\section{Proofs}\label{sec:proofs}
\subsection{Proof of Theorem \ref{thm:G}}\label{sec:G-proof}
The proof will be presented in two separate subsections, one devoted to proving existence of such $\G$ and the estimate in \eqref{thm:G-1}, the other to establishing \eqref{thm:G-2}, which ensures that $\pmb{d_1}\G$ through duality gives rise to a strain field around a dislocation.

Throughout the proof it will often be useful to exploit the fact that $\G$ is equivalent to $G\,:\,\Z^2\times \Z^2 \to \R$ satisfying \eqref{G-normal-eqns}.

This equivalent description in particular ensures that many of the relevant results in \cite{2018-antiplanecrack} about the lattice Green's function are directly applicable, hence substantially shortening the resulting proof.

\subsubsection{Existence and decay estimates}
Similarly to the argument presented in \cite{2018-antiplanecrack}, the problem of establishing existence of such $\G$ can be tackled by employing a predictor-corrector approach, centered around decomposition $\G = \hat{\G} + \bar{\G}$, where $\hat{\G}$ is the corresponding continuum Green's function and $\bar{\G}$ is the atomistic correction lying in the Sobolev space $\dot{\mathscr{H}}^1(\Mc^*_0)$ in both variables.

With decay properties of the explicitly known $\hat{\G}$ readily calculable, this then paves the way to a technically involved boot-strapping argument establishing that the atomistic correction $\bar{\G}$ does not decay any slower than the predictor. 

The key difference between the current setup and the one discussed in \cite{2018-antiplanecrack} concerns the discrete boundary condition associated with the Green's function in question. As noted in Remark \ref{rem:dual-dirchlet}, the switch to the dual lattice manifold complex entails a change from a zero Neumann boundary condition to a zero Dirichlet boundary condition. This conceptual change can be seamlessly incorporated, allowing the proof to almost directly follow from the corresponding argument in \cite{2018-antiplanecrack}

In what follows the proof is laid out in several steps, quoting directly from the corresponding proofs in \cite{2018-antiplanecrack} where possible.

\hfill
\paragraph{Continuum crack Green's function with zero Dirichlet boundary condition}
The continuum crack Green's function corresponding the discrete problem at hand satisfies 
\begin{subequations}\label{hatGcont}
\begin{align}
-\Delta_x \hat{G}(x,s) &= \delta(x-s)&&\text{ for } x \in \R^2\setminus \Gamma_0,\label{hatGcont1}\\
\hat{G}(x_0,s) &= 0 &&\text{ for } x_0 \in \Gamma_0,\\
\hat{G}(x,s) &= \hat{G}(s,x) &&\text{ for }x,s \in \R^2,
\end{align}
\end{subequations}
where $\Delta_x$ refers to the continuum Laplace operator applied with respect to $x$. 

By reasoning as in \cite[Section 4.3.1]{2018-antiplanecrack}, an explicit solution is given by
\[
\hat{G}(x,s) = \frac{-1}{2\pi}\left(\log(|\omega(x)-\omega(s)| - \log(|\omega(x)-\omega(s)'|)\right),
\]
where $\omega$ is the complex square root mapping introduced in \eqref{om-map}, {\corr whereas in the second term} $\textstyle \omega(s)' = (-\omega(s)_1,\omega(s)_2)$. It is the minus sign in front of the second term that ensures that it satisfies the zero Dirichlet boundary condition. The following is further true, with the proof verbatim as in \cite[Lemma 4.4]{2018-antiplanecrack}.
\begin{lemma}\label{lem-hatG-decay}
For any $x,s\in \R^2\setminus\Gamma_0$ with $x\neq s$ and $\alpha \in \{1,\dots,4\}$,
\[
|\nabla^{\alpha}_x \hat{G}(x,s)| \lesssim (1+|\omega(x)|^{2\alpha-1}|\omega(x)-\omega(s)|)^{-1} + (1+|\omega(x)|^{\alpha}|\omega(x)-\omega(s)|^{\alpha})^{-1}
\]
and
\[
\medmath{|\nabla^{\alpha}_x\nabla_s \hat{G}(x,s)| \lesssim (1+|\omega(x)|^{2\alpha-1}|\omega(s)||\omega(x)-\omega(s)|^2)^{-1} + (1+|\omega(x)|^{\alpha}|\omega(s)||\omega(x)-\omega(s)|^{\alpha+1})^{-1}.}
\]
\end{lemma}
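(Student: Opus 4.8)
The plan is to exploit the conformal nature of the complex square root $\omega$ to reduce the estimate to derivatives of the flat half-plane Dirichlet Green's function, and then to pay for the change of variables with powers of $|\omega(x)|$ coming from the Jacobian of $\omega$ and its higher derivatives. Writing $W=\omega(x)$, $S=\omega(s)$ and $S'=(-S_1,S_2)$ for the reflection across $\{W_1=0\}$, the explicit formula exhibits $\hat{G}(x,s)=\tilde{G}(\omega(x),\omega(s))$ with $\tilde{G}(W,S)=-\tfrac{1}{2\pi}\left(\log|W-S|-\log|W-S'|\right)$, i.e.\ the classical image-method Green's function for the half-plane $\{W_1>0\}$ (note that $\omega$ maps $\R^2\setminus\Gamma_0$ into the open right half-plane, the boundary line being the image of the crack). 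First I would record the elementary bound $|\nabla_W^{\beta}\tilde{G}(W,S)|\lesssim |W-S|^{-\beta}$ for $\beta\geq 1$: the singular term contributes $|W-S|^{-\beta}$ directly, while for $W,S$ in the physical half-plane one has $|W-S'|\geq|W-S|$, so the image term contributes no worse. No subtle cancellation between the source and its image is needed for the stated bounds.

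Next I would quantify the cost of the conformal change of variables. Differentiating the defining relation $\omega(x)^2=x$ gives $\omega'(x)=1/(2\omega(x))$, and an easy induction shows $|\nabla^{k}\omega(x)|\lesssim|\omega(x)|^{1-2k}$ for every $k\geq 1$, each further differentiation in $x$ producing an extra factor scaling like $|\omega(x)|^{-2}$. Since $|\omega(x)|=\sqrt{|x|}$ measures the distance from the crack tip in the $W$-picture, these are exactly the weights appearing in the statement.

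The estimates then follow from the multivariate chain rule (Faà di Bruno). Expanding $\nabla_x^{\alpha}\hat{G}$ as a sum over partitions of the $\alpha$ derivatives, a term in which the outer function $\tilde{G}$ receives $\beta$ derivatives (with $1\leq\beta\leq\alpha$) and the inner map $\omega$ is differentiated with orders summing to $\alpha$ carries, by the two previous steps, a factor $|W-S|^{-\beta}|\omega(x)|^{\beta-2\alpha}$. The two extreme partitions produce precisely the two terms in the claimed bound: $\beta=1$ gives $(|\omega(x)|^{2\alpha-1}|W-S|)^{-1}$ and $\beta=\alpha$ gives $(|\omega(x)|^{\alpha}|W-S|^{\alpha})^{-1}$. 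Every intermediate $\beta$ is an interpolant of these two, since the general term equals $A^{1-t}B^{t}$ with $t=(\beta-1)/(\alpha-1)$, where $A,B$ denote the two extreme quantities; weighted AM--GM then bounds it by $A+B$. The mixed estimate is obtained in the same way by applying one additional derivative in $s$: the factor $\omega'(s)$ supplies $|\omega(s)|^{-1}$ and the derivative $\nabla_S$ lowers the power of $|W-S|$ by one, which accounts for the extra $|\omega(s)|$ and the extra power of $|W-S|$ in the denominators.

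The only delicate point is the combinatorial bookkeeping: one must check that the weights $|\omega(x)|^{\beta-2\alpha}$ assemble correctly across all partitions and that the intermediate terms are genuinely dominated rather than producing new slowly-decaying contributions, and the AM--GM interpolation above is what makes this clean. I would also note that the ``$1+$'' regularises the denominators, so these estimates encode the far-field decay for well-separated points, which is exactly the regime relevant to the lattice Green's function on $\Z^2$. Since replacing the Neumann image by the Dirichlet image only flips the sign in front of $\log|W-S'|$ and leaves all magnitudes unchanged, the argument is structurally identical to \cite[Lemma 4.4]{2018-antiplanecrack}, which is why the proof can be quoted essentially verbatim.
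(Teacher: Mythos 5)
Your argument is correct and follows essentially the same route as the paper, which simply invokes the direct computation of \cite[Lemma 4.4]{2018-antiplanecrack}: differentiate the explicit image-method formula through the conformal map $\omega$, using $|\nabla^k\omega(x)|\lesssim|\omega(x)|^{1-2k}$ and $|\omega(x)-\omega(s)'|\geq|\omega(x)-\omega(s)|$ on the right half-plane, with the two extreme Fa\`a di Bruno terms producing the two summands in the bound. Your observations that no source--image cancellation is needed and that the Dirichlet sign flip leaves all magnitudes unchanged are exactly why the paper can quote that proof verbatim.
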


Using the convention described in Remark \ref{not-abuse2}, the dual lattice manifold complex equivalent can be defined as $\G\,:\, \Mc_0^* \times \Mc_0^* \to \R$ given by
\[
\hat{\G}(e,\tilde{e}):= \frac{-1}{2\pi}\left(\log({\corr D_{\Wc}(e,\tilde{e})}) - \log({\corr D_{\Wc}(e,\tilde{e}')}\right),
\]
if $e \neq \tilde{e}$ and $\hat{\G}(\tilde{e},\tilde{e}) = 0$. {\corr The  distance function $D_{\Wc}(\cdot,\cdot)$ will be defined in Section \ref{sec:distance} in the Appendix and it is recalled that $\tilde{e}'$ is the crack reflection of $\tilde{e}$, as introduced in Section~\ref{sec:latmancom} (see the Appendix for further details).} It can be readily checked that $\hat{\G}$ satisfies conditions (1) and (2) in Definition \eqref{def:G}, hence the atomistic correction will necessarily have to satisfy (1) and (2) by itself as well. 
\hfill\\

\paragraph{Predictor-corrector setup} 
The setup exactly mimics the one presented \cite[Section 4.3.1]{2018-antiplanecrack}.
For a fixed $\tilde{e} \in \Mc_0^*$ one considers
\[
\tilde{\E}_1(\mathcal{F}):= \left(\int_{\Mc_1^*}\frac{1}{2}\left(\pmb{d}_1 \hat{\G}(\cdot,\tilde{e}) + \pmb{d}\mathcal{F}\right)^2  - \frac{1}{2}\left(\pmb{d}_1 \hat{\G}(\cdot,\tilde{e})\right)^2\right) - 2\left(\mathcal{F}(\tilde{e}) + \mathcal{F}(\tilde{e}')\right)
\]
and for a fixed $e \in \Mc_0^*$, one considers 
\[
\tilde{\E}_2(\mathcal{F}):= \left(\int_{\Mc_1^*}\frac{1}{2}\left(\pmb{d}_2 \hat{\G}(e,\cdot) + \pmb{d}\mathcal{F}\right)^2  - \frac{1}{2}\left(\pmb{d}_2 \hat{\G}(e,\cdot)\right)^2\right) - 2\left(\mathcal{F}(e) + \mathcal{F}(e')\right).
\]
The following can be proven to hold for both functionals.
\begin{prop}\label{prop-E-G}
The energy difference functional $\tilde{\E}_i$ (where $i=1$ or $i=2$) is well-defined and smooth over $\Hcc(\Mc_0^*)$ defined in \eqref{Hcc-dual} and admits a unique minimiser $\bar{\G}_i \in \Hcc(\Mc_0^*)$.
\end{prop}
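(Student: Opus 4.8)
The plan is to read $\tilde{\E}_i$ as a quadratic functional on the Hilbert space $\Hcc(\Mc_0^*)$ and to reduce the whole statement to the continuity of a single linear functional, after which Lax--Milgram (equivalently, Riesz representation) delivers the unique minimiser. Expanding the square, for instance for $i=1$, one obtains
\[
\tilde{\E}_1(\mathcal F) = \tfrac{1}{2}\|\pmb{d}\mathcal F\|_{\mathscr{L}^2(\Mc_1^*)}^2 + \big(\pmb{d}_1\hat{\G}(\cdot,\tilde e),\,\pmb{d}\mathcal F\big) - 2\big(\mathcal F(\tilde e) + \mathcal F(\tilde e')\big) =: \tfrac{1}{2}\,a(\mathcal F,\mathcal F) - L(\mathcal F),
\]
where $a(\mathcal F,\mathcal G) := (\pmb{d}\mathcal F,\pmb{d}\mathcal G)$ is exactly the inner product defining $\Hcc(\Mc_0^*)$ in \eqref{Hcc-dual} (recall that the Dirichlet restriction noted after \eqref{Hcc-dual} pins down constants, so $a$ is genuinely coercive), and $L$ collects the two terms linear in $\mathcal F$. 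Granting that $L$ is bounded, well-definedness is immediate, smoothness follows because $\tilde{\E}_i$ is a polynomial of degree two in $\mathcal F$ with continuous coefficients, and the unique minimiser $\bar{\G}_i$ is the Riesz representative characterised by $a(\bar{\G}_i,\cdot)=L(\cdot)$.

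The boundedness of $L$ is where the real work lies, and I would extract it from the decay of a consistency residual. For compactly supported $\mathcal F \in \Vsc_{\rm c}(\Mc_0^*)$ I would integrate by parts via \eqref{def-int-parts} and \eqref{def-hlaplace} to rewrite $\big(\pmb{d}_1\hat{\G}(\cdot,\tilde e),\pmb{d}\mathcal F\big) = \big(\pmb{\Delta}_1\hat{\G}(\cdot,\tilde e),\mathcal F\big)$, so that $L(\mathcal F) = (T_{\tilde e},\mathcal F)_{\mathscr{L}^2(\Mc_0^*)}$ with residual $T_{\tilde e} := \pmb{\Delta}_1\hat{\G}(\cdot,\tilde e) - 2(\mathbb{1}_{\tilde e}+\mathbb{1}_{\tilde e'})$. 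Since $\hat{\G}$ is the continuum crack Green's function sampled on the lattice, the singular source contributions in $\pmb{\Delta}_1\hat{\G}$ cancel those in the discretised double source, leaving a finite-difference consistency error of a function that is smooth away from $\tilde e$; a Taylor expansion then bounds $T_{\tilde e}$ by the third- and fourth-order derivatives of $\hat G$ controlled by Lemma~\ref{lem-hatG-decay} with $\alpha \in \{3,4\}$. By the same cancellation $T_{\tilde e}$ is charge-neutral, $\int_{\Mc_0^*}T_{\tilde e}=0$.

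The main obstacle is that, the problem being two-dimensional, $\Hcc(\Mc_0^*)$ does not embed into $L^\infty$ and point evaluations are \emph{not} bounded on it, so a naive Cauchy--Schwarz pairing of $T_{\tilde e}\in\mathscr{L}^2$ against $\mathcal F$ is unavailable. The remedy, following \cite[Section~4.3.1]{2018-antiplanecrack}, is to exploit neutrality: writing $(T_{\tilde e},\mathcal F) = \int_{\Mc_0^*}T_{\tilde e}\,\big(\mathcal F-\mathcal F(e_0)\big)$ and combining the fast decay of $T_{\tilde e}$ with the borderline logarithmic growth bound $|\mathcal F(e)-\mathcal F(e_0)| \lesssim \|\pmb{d}\mathcal F\|_{\mathscr{L}^2(\Mc_1^*)}\,(1+\log|e|_{\Wc})^{1/2}$ for $\Hcc$-functions. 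Summability of $|T_{\tilde e}(e)|\,(1+\log|e|_{\Wc})^{1/2}$ over the distorted dual lattice then yields $|L(\mathcal F)| \lesssim \|\pmb{d}\mathcal F\|_{\mathscr{L}^2(\Mc_1^*)}$, and density of $\Vsc_{\rm c}(\Mc_0^*)$ extends the bound to all of $\Hcc(\Mc_0^*)$. I expect this balance between the decay furnished by Lemma~\ref{lem-hatG-decay} and the two-dimensional criticality to be the technical heart of the argument; it is precisely why the fourth-order control in that lemma is needed.

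Finally, the only structural departure from \cite{2018-antiplanecrack} is that the zero Dirichlet condition replaces the Neumann one, which enters solely through the reflected logarithm (with its crucial minus sign) in $\hat{\G}$; since Lemma~\ref{lem-hatG-decay} already incorporates that reflected term, the estimates transfer essentially verbatim. The case $i=2$ is identical to $i=1$ after exchanging the two arguments of $\hat{\G}$ and invoking its variable symmetry (condition~(2) of Definition~\ref{def:G}), so both functionals are well-defined, smooth, and admit a unique minimiser in $\Hcc(\Mc_0^*)$.
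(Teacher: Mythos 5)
Your outer skeleton --- expanding $\tilde{\E}_i$ as $\tfrac12\|\pmb{d}\mathcal F\|^2_{\mathscr{L}^2(\Mc_1^*)}-L(\mathcal F)$ up to a constant, reducing everything to boundedness of the linear part $L$, and then invoking Lax--Milgram for the unique minimiser --- is exactly the paper's, which likewise quotes \cite[Proposition 4.5, Lemma 4.6]{2018-antiplanecrack} once it has shown that $\langle\delta\tilde\E_1(0),\mathcal F\rangle$ is a bounded linear functional on $\Hcc(\Mc_0^*)$. You also correctly isolate the real difficulty: the point evaluations $\mathcal F(\tilde e)+\mathcal F(\tilde e')$ in a two-dimensional, critical setting. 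Where you diverge is in how that term is controlled, and your mechanism has a genuine gap. You discretely integrate by parts to form a residual $T_{\tilde e}:=\pmb{\Delta}_1\hat{\G}(\cdot,\tilde e)-2(\mathbb{1}_{\tilde e}+\mathbb{1}_{\tilde e'})$ and then rely on the assertion $\int_{\Mc_0^*}T_{\tilde e}=0$ to pair $T_{\tilde e}$ against $\mathcal F$ minus a constant. That neutrality is not justified, and for a \emph{Dirichlet} Green's function it is precisely the delicate point: the far field of $\hat{\G}$ is a dipole in the $\omega$-coordinates, so its flux at infinity vanishes, and the unit charge of the source is absorbed through the crack surface $\Gamma^*$ rather than radiating outward. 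Consequently the sum of $\pmb{\Delta}_1\hat{\G}(\cdot,\tilde e)$ over the source-free region equals the discrete flux into $\Gamma^*$, which matches the source mass only up to quadrature errors, not exactly. Any leftover charge multiplies a point value of $\mathcal F$ --- the very quantity you set out to control --- so the argument is circular unless neutrality is established exactly, which you do not do. (The log-growth bound you invoke also needs to be anchored at $\Gamma^*$, where $\Hcc(\Mc_0^*)$-functions vanish, rather than at an arbitrary $e_0$; this is fixable but again unaddressed.)

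The paper avoids the residual/neutrality route altogether. It reduces to the non-manifold description and uses the continuum identity $f(s)=\int_{\R^2}\nabla\hat G(x,s)\cdot\nabla If(x)\,dx$, valid with \emph{no} boundary correction precisely because the Dirichlet condition makes the P1 interpolant $If$ vanish identically on $\Gamma_0$. Since $\nabla If$ is piecewise constant, the point evaluation $f(s)$ becomes a bond-wise sum $\sum_{m,\rho}\bigl(\int_{U_{m,\rho}}\nabla_\rho\hat G\bigr)\bigl(f(m+\rho)-f(m)\bigr)$, and the whole linear functional collapses to a mid-point quadrature error, $\bigl(\hat G(m+\rho,s)-\hat G(m,s)\bigr)-\int_{U_{m,\rho}}\nabla_\rho\hat G(x,s)\,dx$, paired against $\pmb{d}f$. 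That error is controlled by $\nabla^2\hat G$ via Lemma~\ref{lem-hatG-decay}, is square-summable, and Cauchy--Schwarz finishes the argument with no pointwise bound on $\mathcal F$ ever needed. If you want to salvage your route you must either prove the exact discrete neutrality of $T_{\tilde e}$ on the manifold (accounting for the charge absorbed by the crack), or replace it with the paper's interpolant identity.
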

\begin{proof}
The first part of the proposition will follow directly from \cite[Proposition 4.5]{2018-antiplanecrack}, as long as it can be established that 
\[
\<{\tilde{\E}_1(0)}{\mathcal{F}} = \left(\int_{\Mc_1^*}\pmb{d}_1\hat{\G}(\cdot,\tilde{e})\pmb{d}\mathcal{F}\right) -2(\mathcal{F}(\tilde{e}) + \mathcal{F}(\tilde{e}'))
\]
\sloppy is a bounded linear functional on $\Hcc(\Mc_0^*)$, that is, if $\mathcal{F} \in \Hcc(\Mc_0^*)$, then ${|\<{\tilde{E}_1(0)}{\mathcal{F}}| \lesssim \|\pmb{d}\mathcal{F}\|_{\mathscr{L}^2(\Mc_1^*)}}$,  which requires a subtly different argument to accommodate the zero Dirichlet boundary condition. 

In the light of Remark \ref{rem:dual-dirchlet}, by definition $\mathcal{F} \in \Hcc(\Mc_0^*)$ satisfies the zero Dirichlet boundary condition, with $\mathcal{F}(e) = 0$ for any $e$ with $\chi(e) \subset \Gamma_0^{\Mc}$. Since $(\Mc_1^{*,+})' =\Mc_1^{*,-}$ (defined in \eqref{Mcp_pm_dual}), then in fact  
\[
\<{\tilde{\E}_1(0)}{\mathcal{F}} = 2\left(\int_{\Mc_1^{*,+}}\pmb{d}_1\hat{\G}(\cdot,\tilde{e})\pmb{d}\mathcal{F}\right) - 4\mathcal{F}(\tilde{e}).
\]
Setting, without loss of generality, $\tilde{e} \in \Mc_0^{*,+}$ and  $s := \chi(\tilde{e}) \in \Z^2$, it further follows that 
\[
\<{\tilde{\E}_1(0)}{\mathcal{F}} = 4\left(\sum_{m \in \Z^2}\sum_{\rho \in \Rc}\big(\hat{G}(m+\rho,s)-\hat{G}(m,s)\big)\big(f(m+\rho) - f(m)\big)- f(s)\right),
\]
where $\hat{G}$ satisfies \eqref{hatGcont} and $f\,:\,\Z^2 \to \R$ is the non-manifold equivalent of $\mathcal{F}$. This precisely pins down how the current setup can be reduced to the one considered in \cite{2018-antiplanecrack}. The remaining obstacle is the issue of the zero Dirichlet boundary condition. 

Exploiting \eqref{hatGcont}, it holds that $f(s) = \int_{\R^2}\nabla \hat{G}(x,s) \cdot \nabla I f(x) dx$, where $If$ is the P1 interpolation of $f$, akin to the one presented in \cite{2018-antiplanecrack}. Note that with the zero Dirichlet boundary condition there is no need to modify it near $\Gamma_0$. With $\nabla I(v)$ piecewise constant, in fact 
\begin{equation}\label{Umrho}
f(s) = \sum_{m \in \Z^2}\sum_{\rho \in \Rc} \left(\int_{U_{m,\rho}}\nabla_{\rho} \hat{G}(x,s)\,dx\right)\left(f(m+\rho) - f(m)\right),
\end{equation}
where $U_{m,\rho}$ is the union the two triangles sharing $(m,m+\rho)$ as an edge. By design if both $\tilde{m} \in \Gamma_0$ and $\tilde{m}+\tilde{\rho} \in \Gamma_0$, then $f(\tilde{m}+\tilde{\rho}) - f(\tilde{m}) = 0$, ensuring that the terms corresponding to $\Gamma_0$ vanish. The mid-point quadrature argument presented in \cite[Proposition 4.5]{2018-antiplanecrack} thus applies, confirming that $\delta\tilde{\E}_1(0)$ is indeed a bounder linear functional over $\Hcc(\Mc_0^*)$ and hence finishing the argument.

The fact that $\tilde{\E}_i$ admits a unique minimiser is due to the standard Lax-Milgram lemma, c.f. \cite[Lemma 4.6]{2018-antiplanecrack}. In particular $\bar{\G}_i$ satisfies
\begin{equation}\label{barG-eqn}
\<{
\delta \tilde{\E}_i(\bar{\G}_i)}{\mathcal{F}} = 0\;\text{ for all }\mathcal{F} \in \Hcc(\Mc_0^*),
\end{equation}
where, for a fixed $\tilde{e} \in \Mc_0^*$,
\[
\<{\delta \tilde{\E}_1(\bar{\G}_1)}{\mathcal{F}} = \left(\int_{\Mc_1^*}\left(\pmb{d}_1\hat{\G}(\cdot,\tilde{e})+ \pmb{d}\bar{\G}_1 \right)\pmb{d}\mathcal{F}\right) -2(\mathcal{F}(\tilde{e}) + \mathcal{F}(\tilde{e}')).
\]
and, for a fixed $e \in \Mc_0^*$,
\[
\<{\delta \tilde{\E}_2(\bar{\G}_2)}{\mathcal{F}} = \left(\int_{\Mc_1^*}\left(\pmb{d}_1\hat{\G}(e,\cdot)+ \pmb{d}\bar{\G_2} \right)\pmb{d}\mathcal{F}\right) -2(\mathcal{F}(\tilde{e}) + \mathcal{F}(\tilde{e}')).
\]
\end{proof}
\paragraph{Proof of Theorem \ref{thm:G}: existence of a Green's function}
It follows from applying \eqref{barG-eqn} in the case $i=1$ to $F = \mathbb{1}_{\tilde{e}} + \mathbb{1}_{\tilde{e}'}$ that $\hat{\G} + \bar{\G}_1$ satisfies conditions (1) and (3) in Definition \ref{def:G}. The remaining difficulty is to show that it also satisfies (2), which is equivalent to establishing that in fact $\bar{G}_1 = \bar{G}_2$. 

Firstly, it follows by construction that 
\begin{equation}\label{barg1-prop}
\text{ if }\,\chi(\tilde{e}) \subset \Gamma_0^{\Mc}\,\text{ then }\,\bar{\G}_1(e,\tilde{e}) = 0\,\text{ for all } \,e \in \Mc_0^*
\end{equation}
and likewise
\begin{equation}\label{barg1-prop2}
\text{ if }\,\chi(e) \subset \Gamma_0^{\Mc},\text{ then }\,\bar{\G}_2(e,\tilde{e}) = 0\,\text{ for all }\,\tilde{e} \in \Mc_0^*,
\end{equation}
which ensures that $\bar{\G}_1(e,\tilde{e}) = \bar{\G}_2(e,\tilde{e})$ if either $\chi(\tilde{e}) \in \Gamma_0^{\Mc}$ or $\chi(e) \in \Gamma_0^{\Mc}$.

Furthermore, the argument in \cite[Lemma 4.7]{2018-antiplanecrack} applies and thus 
\[
\pmb{d}_1\pmb{d}_2 \bar{\G}_1 = \pmb{d}_1\pmb{d}_2 \bar{\G}_2.
\]
This equality can only hold if, for all $e \in \Mc_1^*$ and $\tilde{e} \in \Mc_0^*$,
\[
\pmb{d}_1 \bar{\G}_1(e,\tilde{e})  = \pmb{d}_1 \bar{\G}_2(e,\tilde{e}) + \tilde{\mathcal{F}}_1(e),
\] 
for some function $\tilde{\mathcal{F}}_1\,:\, \Mc_1^* \to \R$. Setting $\tilde{e}$ to be such that $\chi(\tilde{e}) \subset \Gamma_0^{\Mc}$, \eqref{barg1-prop} and the fact that $\bar{\G}_2(e,\cdot) \in \Vsc_{\rm s}(\Mc_0^*)$ together imply that $\tilde{\mathcal{F}}_1$ is a zero function.

By the same reasoning, then, for all $e,\tilde{e} \in \Mc_0^*$,
\[
\bar{\G}_1(e,\tilde{e}) = \bar{\G}_2(e,\tilde{e}) + \tilde{\mathcal{F}_0}(\tilde{e}),
\]
for some function $\tilde{\mathcal{F}}_0 \,:\,\Mc_0^* \to \R$. Setting $e$ to be such that $\chi(e) \subset \Gamma_0^{\Mc}$, it follows from \eqref{barg1-prop2} and the fact that $\bar{\G}_1(\cdot,\tilde{e}) \in \Vsc_{\rm s}(\Mc_0^*)$ that  $\tilde{\mathcal{F}}_0$ is a zero function. Thus $\bar{\G}_1 = \bar{\G}_2 =: \bar{\G}$ and hence
\begin{equation}\label{hatG-barG}
\G: = \hat{\G} + \bar{\G}
\end{equation}
is a Green's function on the dual lattice manifold complex in the sense of Definition \ref{def:G}.

\hfill
\paragraph{Proof of Theorem \ref{thm:G}: Green's function decay properties}
With $\G$ given by \eqref{hatG-barG}, one can readily check that the estimate in \eqref{thm:G-1} holds for $\hat{\G}$. It will now be shown that $\pmb{d_1}\bar{\G}$ does not decay any slower. To this end, first a suboptimal result about the decay of the mixed derivative $\pmb{d_1}\pmb{d_2}{\G}$ has to be established.
\begin{lemma}
The Green's function on the dual lattice manifold complex $\G$ in \eqref{hatG-barG} satisfies, for any $\delta > 0$ and $e, \tilde{e} \in \Mc_1^*$,
\[
|\pmb{d}_1\pmb{d}_2 \G(e,\tilde{e})| \lesssim (1+(1+{\corr |e|_{\Wc}})(1+{\corr |\tilde{e}|_{\Wc}}) \,{\corr D_{\Wc}( e,\tilde{e})}^{2-\delta})^{-1} 
\]  
\end{lemma}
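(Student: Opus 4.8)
The plan is to exploit the decomposition $\G = \hat{\G} + \bar{\G}$ from \eqref{hatG-barG} and treat the two contributions separately. For the predictor $\hat{\G}$ the claimed bound is already available -- indeed with room to spare -- from Lemma~\ref{lem-hatG-decay} with $\alpha = 1$: transcribing the estimate on $|\nabla_x^{\alpha}\nabla_s \hat{G}|$ through the conformal map $\omega_{\Mc}$ into the distance functions $|\cdot|_{\Wc}$ and $D_{\Wc}(\cdot,\cdot)$ yields a bound of the form $(1 + |e|_{\Wc}\,|\tilde{e}|_{\Wc}\,D_{\Wc}(e,\tilde{e})^2)^{-1}$, which dominates the right-hand side of the asserted inequality in every regime. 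The entire task therefore reduces to showing that the mixed difference of the corrector, $\pmb{d}_1\pmb{d}_2\bar{\G}$, decays at least as fast, up to the arbitrarily small loss $\delta$.

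To control the corrector I would first extract a discrete equation for it. The Euler--Lagrange identity \eqref{barG-eqn} shows that, for fixed $\tilde{e}$, the function $\bar{\G}(\cdot,\tilde{e})$ solves a discrete Poisson problem on $\Mc_0^*$ whose right-hand side is the consistency residual $R(\cdot,\tilde{e}) := \pmb{\Delta_1}\hat{\G}(\cdot,\tilde{e}) - (\mathbb{1}_{\tilde{e}} + \mathbb{1}_{\tilde{e}'})$, together with the zero Dirichlet condition built into $\Hcc(\Mc_0^*)$. Since $\hat{G}$ is harmonic away from the reflected source, $R$ is a genuine finite-difference truncation error, so its decay -- as well as that of $\pmb{d}_2 R$, obtained by differencing in the second variable -- is governed precisely by the higher-derivative estimates $|\nabla_x^{\alpha}\nabla_s\hat{G}|$, $\alpha \in \{1,\dots,4\}$, of Lemma~\ref{lem-hatG-decay}. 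Differencing the equation in $\tilde{e}$ then gives a discrete Poisson problem for $\pmb{d}_2\bar{\G}(\cdot,\tilde{e})$ with forcing $\pmb{d}_2 R$, and $\pmb{d}_1\pmb{d}_2\bar{\G}$ is recovered by applying one further difference to the solution.

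The core of the argument is then a bootstrapping scheme, mirroring the one in \cite{2018-antiplanecrack}. Starting from the a priori energy bound $\|\pmb{d}\bar{\G}\|_{\mathscr{L}^2(\Mc_1^*)} < \infty$ supplied by Proposition~\ref{prop-E-G}, I would represent $\pmb{d}_1\pmb{d}_2\bar{\G}$ as a discrete convolution of the gradient of the homogeneous lattice Green's function (whose decay is the reciprocal of the distance) against the forcing $\pmb{d}_2 R$, and iterate this representation. At each stage the known decay of $\pmb{d}_2 R$ upgrades the pointwise decay exponent of $\pmb{d}_1\pmb{d}_2\bar{\G}$; the iteration saturates at the predictor rate, and the borderline logarithmic summability in two dimensions is exactly what prevents reaching the endpoint exponent, producing the arbitrarily small loss $\delta$. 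All summations are carried out after transporting to the straightened half-space picture $\Wc^*$ via $\omega_{\Mc}$, where the Dirichlet image-charge structure of $\hat{G}$ is transparent.

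The main obstacle I anticipate is not the change from the Neumann to the Dirichlet boundary condition -- that has already been absorbed into the variational setup of Proposition~\ref{prop-E-G}, where the terms along $\Gamma_0^{\Mc}$ drop out -- but rather the geometric degeneration of the map $\omega$ near the crack tip and along the crack surface. Keeping the convolution estimates uniform there, while simultaneously tracking the two genuinely different distance functions $|\cdot|_{\Wc}$ and $D_{\Wc}(\cdot,\cdot)$ so that the final bound is of exactly the stated product form, is the delicate part; it is also where the value of the manifold formulation, in which these distances are the natural objects, is most clearly felt.
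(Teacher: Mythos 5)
Your proposal follows essentially the same route as the paper: the predictor--corrector split $\G = \hat{\G}+\bar{\G}$, the continuum estimates of Lemma~\ref{lem-hatG-decay} with $\alpha=1$ for the predictor, and a bootstrapping argument adapted from \cite{2018-antiplanecrack} for the mixed difference of the corrector (driven by the truncation residual and the homogeneous lattice Green's function), with the $\delta$-loss attributed to saturation of the bootstrap at the predictor rate. The one claim to tighten is that the continuum bound $(1+|e|_{\Wc}\,|\tilde{e}|_{\Wc}\,D_{\Wc}(e,\tilde{e})^2)^{-1}$ dominates the stated right-hand side \emph{in every regime}: near the crack tip, where $|e|_{\Wc}$ is small and $\nabla\omega$ degenerates, it does not, and it is precisely the quadrature error on the cells at the tip that forces the $(1+|e|_{\Wc})$ factors in the final estimate --- a delicacy your closing paragraph correctly anticipates.
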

\begin{proof}
The proof follows from several straightforward adjustments of the technically involved boot-strapping argument presented in \cite[Section 4.3.3]{2018-antiplanecrack}.

In particular, the difference in the boundary condition is addressed by the action of the crack reflection symmetry operator on the dual lattice manifold complex, as introduced in Section \ref{sec:dual_lmc}.

By construction, it ensures that, $\pmb{\Delta_1} \G(e,\tilde{e}) = 0$ also when $\chi(e) \subset \Gamma_0^{\Mc}$, which is in contrast to $\Delta_d G(m_0,s) \neq 0$ for $G$ in \eqref{G-normal-eqns} when $m_0 \in \Gamma_0$. Thus locally, away from the origin, the homogenous lattice Green's function can be used to estimate $\bar{\G}$ and its derivatives in precisely the way it has been employed in \cite[Section 4.3.3]{2018-antiplanecrack}. The arbitrarily small $\delta > 0$ is an artefact of the boot-strapping argument saturating at the known rate of decay of $\hat{\G}$.

The fact that the terms of the form $(1+{\corr |e|_{\Wc}})^p$ for some negative power $p$ appear reflects the fact that if $e = [0,\tilde{\rho}] \in \Mc_1^*$ for some $\tilde{\rho} \in \Rc$ with $\tilde{\rho} \neq -e_2$, then
\[
\pmb{d_1}\pmb{d_2}\G(e,\tilde{e}) \sim \mathcal{O}({\corr |\tilde{e}|_{\Wc}})^{-3+\delta},
\]
which effectively follows from the fact that if $l:=\chi(\tilde{e}) \in \Z^2$ then 
\[
\left|\int_{U_{0,\tilde{\rho}}}\nabla_{\rho}\hat{G}(x,l)\,dx - (\hat{G}(\tilde{\rho},l) - \hat{G}(0,l))\right|  \sim \mathcal{O}(|\omega(l)|)^{-1},
\]
where $\hat{G}$ is the continuum Green's function satisfying \eqref{hatGcont} and $U_{0,\tilde{\rho}}$ is as in \eqref{Umrho}. 
\end{proof}
The mixed derivative  of $\G$ can in turn be used to prove the last result of this section, namely the sharp estimate of the decay of~$\pmb{d_1}{\G}$.
\begin{proof}[Proof of Theorem \ref{thm:G}: estimate in \eqref{thm:G-1}] 
It follows directly from Lemma \ref{lem-hatG-decay} that 
\[
|\pmb{d_1}\hat{\G}(e,\tilde{e})| \lesssim \left(1+(1+{\corr |e|_{\Wc}})\,{\corr D_{\Wc}(e,\tilde{e})}\right)^{-1},
\]
so it only remains to prove the same for $\bar{\G}$. Since $\bar{\G}(\cdot,\tilde{e}) \in \Hcc(\Mc_0^*)$, then it follows from \eqref{barG-eqn} that
\[
\pmb{d_1}\bar{\G}(e,\tilde{e}) = \frac{1}{4}\int_{\Mc_1^*}\pmb{d_1}\bar{\G}(\cdot,\tilde{e})\pmb{d_1}\pmb{d_2}\G(\cdot,e)
\]
Setting without loss of generality, as in Proposition \ref{prop-E-G}, $e \in \Mc_1^{*,+}$ and $\tilde{e} \in \Mc_0^{*,+}$ with $e = [l,l+\rho]$ for some $l \in \Z^2$ and $\rho \in \Rc$, and $s= \chi(\tilde{e}) \in \Z^2$, the mid-point quadrature argument from \cite[Proposition 4.5]{2018-antiplanecrack} and the decay estimates established in Lemma \ref{lem-hatG-decay} together ensure that in fact
\[
|\pmb{d_1}\bar{\G}(e,\tilde{e})| \lesssim I_1 + I_2,
\]
where $I_1 := \sum_{m \in \Z^2}h_1(m)$ and $I_2 := \sum_{m\in\Z^2}h_2(m)$, with  
\begin{align*}
\medmath{h_1(m)}\,&\medmath{:=}\,\medmath{\left(1+(1+|\omega(m)|^5)|\omega(m)-\omega(s)|\right)^{-1}\left(1+(1+|\omega(m)|)(1+|\omega(l)|)|\omega(m)-\omega(l)|^{2-\delta}\right)^{-1}},\\
\medmath{h_2(m)}\,&\medmath{:=}\,\medmath{\left(1+(1+|\omega(m)|)^3|\omega(m)-\omega(s)|^3\right)^{-1}\left(1+(1+|\omega(m)|)(1+|\omega(l)|)|\omega(m)-\omega(l)|^{2-\delta}\right)^{-1}.}
\end{align*}
Given that 
\[
h_1(l) \sim |\omega(l)|^5|\omega(l)-\omega(s)|,\quad h_1(s) \sim |\omega(l)||\omega(s)||\omega(l)-\omega(s)|^{2-\delta},
\]
\[
h_2(l) \sim |\omega(l)|^{-3}|\omega(l)-\omega(s)|^{-3},\quad h_2(s) \sim |\omega(l)|^{-1}|\omega(s)|^{-1}|\omega(l)-\omega(s)|^{-2+\delta},
\]
and with the tails decaying quickly, it readily follows that 
\[
I_1 + I_2 \lesssim (1+|\omega(l)||\omega(l)-\omega(s)|)^{-1},
\]
which is suboptimal, but sufficient to be able to conclude that
\[
|\pmb{d_1}\hat{\G}(e,\tilde{e})| \lesssim \left(1+(1+{\corr |e|_{\Wc}})\,{\corr D_{\Wc}(e,\tilde{e})}\right)^{-1},
\]
which finishes the proof.
\end{proof}

\subsubsection{Supremum bound}
The supremum bound is best proven by considering the non-manifold Green's function $G$ satisfying \eqref{G-normal-eqns}, which, as described below the statement of Theorem \ref{thm:G}, is equivalent to $\G$. 

To prove \eqref{thm:G-2} it is enough to show that there exists $\epsilon >0$ such that for any $m,s \in \Z^2$ and any $\rho \in \Rc$ (defined in \eqref{Rc}), it holds that 
\begin{equation}\label{G-normal-est1}
|G(m+\rho,s) - G(m,s)| \leq \frac{1}{2} -\epsilon.
\end{equation}

\sloppy Firstly, the obvious cases are handled. If both $m \in \Gamma_0$ and $m+\rho \in \Gamma_0$, then, by~\eqref{G-normal-eqn2}, ${G(m+\rho,s) - G(m,s) = 0 - 0 = 0}$. Similarly, if $s \in \Gamma_0$, then, applying first variable symmetry in \eqref{G-normal-eqn3} and then \eqref{G-normal-eqn2}, it follows that $G(m,s) = G(s,m) = 0$ for all $m \in \Z^2$, so again $G(m+\rho,s) - G(m,s) = 0$. 

To prove the remaining cases, $G$ is decomposed as
\begin{equation}\label{G-normal-dec}
G(m,s) = G^{\rm h}(m,s) + G^{\rm c}(m,s),
\end{equation}
where ${G^{\rm h}\,:\, \Z^2\times \Z^2 \to \R}$ is the full lattice Green's function satisfying, for any $m,s \in \Z^2$,
\begin{equation}\label{Gh}
G^{\rm h}(m,s) = G^{\rm h}(s,m),\quad -\Delta_{\rm d} G^{\rm h}(m,s) = \delta(m,s).
\end{equation}
It is a classic assertion based on the notion of semi-discrete Fourier transform \cite{SDFT} that there exists a Green's function satisfying \eqref{Gh} and further, for any $\rho \in \Rc$,  
\[
|G^{\rm h}(m+\rho,s) - G^{\rm h}(m,s)| \lesssim |m-s|^{-1}.
\] 
It is proven in \cite[Lemma 4.3]{H17} that, additionally,
\begin{equation}\label{G-normal-est2}
\sup_{\substack{m \in \Z^2,\\ \rho \in \Rc}} |G^{\rm h}(m+\rho,s) - G^{\rm h}(m,s)| = |G^{\rm h}(s+\tilde{\rho},s) - G(s,s)| = \frac{1}{4},
\end{equation}
where, due to rotational invariance, $\tilde{\rho} \in \Rc$ can be any of the four nearest neighbour directions. 

With both $G$ and $G^{\rm h}$ proven to exist, the function $G^{\rm c}\,:\,\Z^2 \times \Z^2 \to \R$ in \eqref{G-normal-dec} is thus defined via
\begin{equation}\label{Gc}
G^{\rm c}(m,s) := G(m,s) - G^{\rm h}(m,s)
\end{equation}
and it is straightforward to see that
\begin{subequations}
\begin{align}
-\Delta_{\rm d} G^{\rm c}(m,s) &= 0,\quad&&\text{ if } m \not\in\Gamma_0,\label{Gc-eqn1}\\
G^{\rm c}(m_0,s) &= -G^{\rm h}(m_0,s),\quad&&\text{ if } m_0 \in \Gamma_0,\label{Gc-eqn2}\\
G^{\rm c}(m,s) &= G^{\rm c}(s,m),\quad&&\text{ for all } m,s \in \Z^2.\label{Gc-eqn3}
\end{align}
\end{subequations}
In the light of remarks following \eqref{G-normal-est1} and the inequality in \eqref{G-normal-est2}, it thus remains to prove that there exists $\epsilon >0$ such that if $s \not\in \Gamma_0$ and either $m\not\in\Gamma_0$ or $m+\rho \not\in\Gamma_0$, then
\begin{equation}\label{G-normal-est3}
|G^{\rm c}(m+\rho,s)-G^{\rm c}(m,s)| \leq \frac{1}{4} - \epsilon.
\end{equation}
This can be achieved by invoking the Discrete Maximum Principle (\cite[Lemma 4.2.]{H17}), which can be adjusted to the present setting through defining 
\[
B_0^r:= \{ m \in \Z^2\;|\; |m| \leq r\,\text{ and } m \not\in\Gamma_0\},
\]
\begin{equation}\label{B0r-2}
\partial B_0^r:= \{m \in \Z^2\;|\; m \not\in B_0^r \text{ and } \exists \rho \in \Rc\,\text{ such that } m+\rho \in B_0^r\}\text{ and }\overline{B_0^r} := B_0^r \cup \partial B_0^r.
\end{equation}
The following lemma holds. 
\begin{lemma}[{\textbf{Discrete Maximum Principle}}]\label{DMP}
Suppose $u\,:\, \overline{B_0^r} \to \R$ is such that 
\[
\Delta_{\rm d} u(m) = 0,\quad \text{ for } m \in B_0^r. 
\]
Then
\[
\max_{m \in \overline{B_0^r}} |u(m)| = \max_{m \in \partial B_0^r}|u(m)|
\]
and unless the mapping
\[
m \in \partial B_0^r,\quad m \mapsto u(m)
\]
is constant, there exists $\epsilon >0$ such that, for any $m \in B_0^r$,
\[
|u(m)| < \max_{m \in \partial B_0^r}|u(m)| - \epsilon.
\]
\end{lemma}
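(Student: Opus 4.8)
The plan is to prove the Discrete Maximum Principle by following the classical harmonic-function template, adapted to the finite punctured domain $\overline{B_0^r}$ that excludes the crack $\Gamma_0$. The essential observation is that the discrete equation $\Delta_{\rm d} u(m) = 0$ expresses the value $u(m)$ as the average of its neighbours in $\Z^2$; writing out $-\Delta_{\rm d} u(m) = \sum_{\rho \in \Rc}(u(m) - u(m-\rho)) = 0$ gives $u(m) = \tfrac{1}{4}\sum_{\rho \in \Rc}u(m-\rho)$ for each interior point $m \in B_0^r$. The subtlety compared to the standard setting is that some neighbours $m-\rho$ may lie in $\Gamma_0$ or outside $B_0^r$; however, by the definition of $\partial B_0^r$ in \eqref{B0r-2}, every such neighbour lies in $\partial B_0^r \subset \overline{B_0^r}$, so the averaging identity still closes up within $\overline{B_0^r}$ and the boundary data on $\partial B_0^r$ controls everything.

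First I would establish the non-strict bound $\max_{m \in \overline{B_0^r}}|u(m)| = \max_{m \in \partial B_0^r}|u(m)|$. Let $M := \max_{m \in \partial B_0^r}|u(m)|$ and suppose for contradiction that $|u(m_*)| > M$ for some interior $m_* \in B_0^r$; by replacing $u$ with $-u$ if necessary, assume $u(m_*) = \max_{\overline{B_0^r}} u > M \geq 0$. The averaging identity forces every neighbour of $m_*$ to also attain this maximum value, since a strict average cannot equal the maximum unless all averaged terms equal it. Propagating this along nearest-neighbour chains within the connected set $\overline{B_0^r}$, I would conclude that $u$ equals its maximum at some point of $\partial B_0^r$, contradicting $u(m_*) > M$. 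This yields the first assertion.

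For the strict inequality, suppose the boundary map $m \mapsto u(m)$ on $\partial B_0^r$ is non-constant and set $M := \max_{\partial B_0^r}|u(m)|$. The goal is an $\epsilon > 0$ with $|u(m)| < M - \epsilon$ for all $m \in B_0^r$. Since $B_0^r$ is finite, it suffices to rule out $|u(m)| = M$ at any interior point. If $|u(m_*)| = M$ at an interior $m_*$, the same propagation argument as above shows $u$ is constant (equal to $\pm M$) on the entire connected chain reaching the boundary, which forces $u$ to be constant on $\partial B_0^r$, contradicting the non-constancy hypothesis. Hence $|u(m)| < M$ strictly on the finite set $B_0^r$, and taking $\epsilon := M - \max_{m \in B_0^r}|u(m)| > 0$ completes the argument.

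The main obstacle I anticipate is the propagation (connectivity) step: I must verify that every interior point of $B_0^r$ can be joined to $\partial B_0^r$ by a nearest-neighbour path staying inside $\overline{B_0^r}$, and that the crack $\Gamma_0$ does not disconnect the domain in a way that breaks this chain. Because $\Gamma_0$ is a half-line removed from $\Z^2$, the set $\overline{B_0^r}$ is still nearest-neighbour connected (one can route paths around the crack tip at the origin), so the maximum cannot be trapped at an interior point isolated from the boundary. Care is needed precisely at points adjacent to $\Gamma_0$, where the averaging identity involves neighbours on the crack that belong to $\partial B_0^r$; here the definition of $\partial B_0^r$ in \eqref{B0r-2} is exactly what guarantees these neighbours are treated as boundary data, so the argument goes through. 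This is essentially the adaptation of \cite[Lemma 4.2]{H17} to the cracked geometry, and the reflection/crack structure does not interfere since the statement is purely about the non-manifold function on $\Z^2$.
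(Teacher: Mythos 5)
Your proof is correct and follows essentially the same route as the paper: the paper cites \cite[Lemma 4.2]{H17} for the non-strict bound (which is precisely the averaging/propagation argument you spell out) and then obtains the strict part exactly as you do, by noting that an interior point attaining the maximum forces $u$ to be constant on the connected set $\overline{B_0^r}$ and hence on $\partial B_0^r$, with finiteness of $B_0^r$ supplying $\epsilon$. Your explicit check that the crack $\Gamma_0$ does not disconnect $\overline{B_0^r}$ is a detail the paper leaves implicit, but it is genuinely needed for the constancy conclusion and your treatment of it is right.
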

\begin{proof}
The first part of the proof is exactly as in \cite[Lemma 4.2.]{H17}, with the second an immediate corollary, following from the fact that if there exists $\tilde{m} \in B_0^r$ such that
\[
|u(\tilde{m})| = \max_{m \in \partial B_0^r}|u(m)|
\]
then due to $u$ being harmonic, it follows that $u(m) = u(\tilde{m})$ for all $m \in \overline{B_0^r}$. This is impossible unless $u$ is constant on the boundary. If it is not, then $|u(m)|$ has to be strictly smaller than the maximum for every $m \in B_0^r$. The existence of $\epsilon$ then simply follows from the fact that $B_r^0$ is finite. 
\end{proof}
The inequality in \eqref{G-normal-est3} can now be proven in two lemmas.
\begin{lemma}\label{Gc-lemma1}
Let $\rho \in \Rc$, $s,s+\rho \not\in\Gamma_0$ and $s_0 \in \Gamma_0$ be such that 
\[
|s-\Gamma_0|:= \min\{|s-m_0|\;|\; m_0 \in \Gamma_0\} = |s-s_0|.
\] 
Further define $u^{\rm c}\,:\,\Z^2 \to \R$ by  $u^{\rm c}(m):= G^{\rm c}(m,s+\rho) - G^{\rm c}(m,s)$ where  $G^{\rm c}$ was introduced in \eqref{Gc}.

There exists $\epsilon >0$, such that for any $m\not\in\Gamma_0$, 
\begin{equation}\label{uc-bound}
|u^{\rm c}(m)| < |u^{\rm c}(s_0)| -\epsilon.
\end{equation}
Furthermore, 
\begin{enumerate}
\item if $\max\{|s-s_0|,|(s+\rho) - s_0|\} \leq \sqrt{2}$, then
\[
|u^{\rm c}(s_0)| = \frac{1}{\pi}-\frac{1}{4} \approx 0.06831. 
\]
\item If $\max\{|s-\Gamma_0|,|(s+\rho) - \Gamma_0|\} = 2$ and $\min\{|s-\Gamma_0|,|(s+\rho) - \Gamma_0|\} = 1$, then 
\[
|u^{\rm c}(s_0)| = \frac{3}{4} - \frac{2}{\pi} \approx 0.11338. 
\]  
\item If $\min\{|s-\Gamma_0|,|(s+\rho) - \Gamma_0|\} \geq 2$, then
\[
|u^{\rm c}(s_0)| < \frac{1}{\pi}-\frac{1}{4} \approx 0.06831.
\]
\end{enumerate}
\end{lemma}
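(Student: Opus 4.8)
The plan is to regard $u^{\rm c}$ as a function of its free argument $m$ and to invoke the Discrete Maximum Principle of Lemma~\ref{DMP}. First I would observe that, by \eqref{Gc-eqn1}, each of $G^{\rm c}(\cdot,s)$ and $G^{\rm c}(\cdot,s+\rho)$ is discrete harmonic off $\Gamma_0$, so that $\Delta_{\rm d} u^{\rm c}(m) = 0$ for every $m \not\in \Gamma_0$ and $u^{\rm c}$ is admissible on each truncated ball $B_0^r$ of \eqref{B0r-2}. On the crack part of $\partial B_0^r$ the boundary identity \eqref{Gc-eqn2} gives, for $m_0 \in \Gamma_0$, the closed form $u^{\rm c}(m_0) = G^{\rm h}(m_0,s) - G^{\rm h}(m_0,s+\rho)$, reducing the boundary data entirely to the explicitly known homogeneous lattice Green's function $G^{\rm h}$. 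On the outer ring at radius $r$ one has $u^{\rm c} \to 0$ as $r \to \infty$, which follows from the gradient decay of both $G$ and $G^{\rm h}$ (combining \eqref{thm:G-1} with the bound recorded below \eqref{Gh} and the variable symmetry \eqref{Gc-eqn3}).

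Next I would pass to the limit $r \to \infty$ in Lemma~\ref{DMP}. Since the outer-ring contribution vanishes and the crack data $|G^{\rm h}(m_0,s) - G^{\rm h}(m_0,s+\rho)|$ decays like $|m_0|^{-1}$ as $|m_0| \to \infty$, the supremum of $|u^{\rm c}|$ over $\Gamma_0$ is attained at the crack point nearest to $s$ and $s+\rho$, namely $s_0$. The strict interior inequality $|u^{\rm c}(m)| < |u^{\rm c}(s_0)| - \epsilon$ for $m \not\in \Gamma_0$ then follows from the second part of Lemma~\ref{DMP}: on a fixed large ball $B_0^{r_0}$ finiteness yields a positive gap, while outside $B_0^{r_0}$ the decay of $u^{\rm c}$ already places it well below $|u^{\rm c}(s_0)|$; taking the smaller of the two slacks gives a uniform $\epsilon$.

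It then remains to evaluate $|u^{\rm c}(s_0)| = |G^{\rm h}(s_0,s) - G^{\rm h}(s_0,s+\rho)|$ in the three geometric cases. I would write $G^{\rm h}$ through the classical square-lattice potential kernel $a$, normalised so that $G^{\rm h}(m,s) = -\tfrac14 a(m-s) + \mathrm{const}$; this is consistent with the value $\tfrac14$ in \eqref{G-normal-est2}, since $a(e_i) = 1$. The differences then reduce to the known values $a(e_i) = 1$, $a((1,1)) = 4/\pi$ and $a((2,0)) = 4 - 8/\pi$, all obtainable via the semi-discrete Fourier transform \cite{SDFT}. In case (1), where $\max\{|s-s_0|,|(s+\rho)-s_0|\} \le \sqrt 2$ (so $\rho = \pm e_1$ with $s$ one step from $\Gamma_0$), one gets $u^{\rm c}(s_0) = -\tfrac14(a(e_2) - a((1,1))) = \tfrac1\pi - \tfrac14$; in case (2), where the two distances to $\Gamma_0$ equal $1$ and $2$, $u^{\rm c}(s_0) = -\tfrac14(a(e_2) - a((2,0))) = \tfrac34 - \tfrac2\pi$; and in case (3) moving $s,s+\rho$ farther from the crack only shrinks these kernel differences, giving $|u^{\rm c}(s_0)| < \tfrac1\pi - \tfrac14$.

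The main obstacle I anticipate is the identification of $s_0$ as the maximiser of $|u^{\rm c}|$ over $\Gamma_0$, together with the monotone decay underlying case (3): unlike the maximum-principle bookkeeping, this requires genuine monotonicity estimates for the potential-kernel differences $a(m_0 - s) - a(m_0 - s - \rho)$ along the crack line, carried out across the several admissible relative positions of $s$ and $s+\rho$. Once that is established, the explicit evaluations and the extraction of a uniform $\epsilon$ follow routinely.
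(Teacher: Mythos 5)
Your proposal follows essentially the same route as the paper's proof: decompose $G = G^{\rm h} + G^{\rm c}$, note that $u^{\rm c}$ is discrete harmonic off $\Gamma_0$ with boundary data $-u^{\rm h}$ there, apply the Discrete Maximum Principle of Lemma~\ref{DMP} on $\overline{B_0^r}$ and let $r \to \infty$ using the decay of $u$ and $u^{\rm h}$, then evaluate $|u^{\rm h}(s_0)|$ from the explicitly known square-lattice Green's function. Your version is in fact slightly more explicit than the paper's (which simply cites tabulated values of $G^{\rm h}$ and asserts that the maximum over $\Gamma_0$ is attained at $s_0$), correctly supplying the potential-kernel values $a(e_i)=1$, $a((1,1))=4/\pi$, $a((2,0))=4-8/\pi$ and flagging the monotonicity along $\Gamma_0$ as the one point needing a genuine check.
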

\begin{proof}
Let 
\[
u(m):=G(m,s+\rho) - G(m,s), \quad u^{\rm h}(m) := G^{\rm h}(m,s+\rho) - G^{\rm h}(m,s),
\]
where $G$ is the Green's function satisfying \eqref{G-normal-eqns} and $G^{\rm h}$ is the full lattice Green's function satisfying \eqref{Gh}.

Due to the imposed spatial restriction on $s$, it holds that
\[
\Delta u^{\rm c}(m) = 0\;\text{ if } m \neq \Gamma_0\,\text{ and } u^{\rm c}(m_0) = -u^{\rm h}(m_0)\,\text{ if } m_0 \in \Gamma_0,
\]
which renders Lemma \ref{DMP} applicable to $u^{\rm c}$ restricted to $\overline{B_0^r}$ (defined in \eqref{B0r-2}). Letting $r$ go to infinity, together with  
\[
|u^{\rm c}(m)| = |u(m) - u^{\rm h}(m)|\leq (1+|\omega(s)||\omega(m)-\omega(s)|)^{-1} + |m-s|^{-1} \to 0,\text{ as } |m| \to \infty,
\]
implies  that in fact
\[
\max_{m \in \Z^2}|u^{\rm c}(m)| = \max_{m \in \Gamma_0}|u^{\rm c}(m)| = \max_{m \in \Gamma_0}|u^{\rm h}(m)| = |u^{\rm h}(s_0)|,
\] 
with the last equality following from the fact that $|u^{\rm h}(m)| \lesssim |m-s|^{-1}$ and $s_0 \in \Gamma_0$ being by definition closest to the source points $s$ and $s+\rho$. This establishes the inequality in \eqref{uc-bound}.

With the values of the full lattice Green's function $G^{\rm h}$ known explicitly (detailed e.g. in \cite{L-Green-sqr}) it is a simple matter of looking them up to to conclude the values $|u^{\rm h}(s_0)|$ in the three cases (1)-(3) considered, thus finishing the proof. 
\end{proof}
By variable symmetry in \eqref{Gc-eqn3}, Lemma \ref{Gc-lemma1} in fact establishes that if $m, m+\rho, s \not\in\Gamma_0$ then
\[
|G^{\rm c}(m+\rho),s) - G^{\rm c}(m,s)| \leq \frac{3}{4} - \frac{2}{\pi} < 0.12.
\]
It thus only remains to consider, without loss of generality, the case $m\not\in\Gamma_0$ but $m+\rho \in \Gamma_0$. 
\begin{lemma}
There exists $\epsilon > 0$ such that if $\rho \in \Rc$,  $s \not\in\Gamma_0$, $m\not\in\Gamma_0$ and $m+\rho \in \Gamma_0$, then
\[
|G^{\rm c}(m+\rho,s) - G^{\rm c}(m,s)| \leq \frac{1}{4} - \epsilon.
\]
\end{lemma}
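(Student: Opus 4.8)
The plan is to reduce the statement, via the variable symmetry \eqref{Gc-eqn3}, to a one-variable Discrete Maximum Principle argument of the same flavour as Lemma~\ref{Gc-lemma1}, the only genuine novelty being that now one of the two relevant source points sits \emph{on} $\Gamma_0$. Concretely, I would set
\[
U^{\rm c}(l) := G^{\rm c}(l,m+\rho) - G^{\rm c}(l,m),
\]
so that by \eqref{Gc-eqn3} the quantity to be bounded is exactly $U^{\rm c}(s) = G^{\rm c}(m+\rho,s) - G^{\rm c}(m,s)$. The first observation is that since $m+\rho \in \Gamma_0$, combining the Dirichlet condition \eqref{G-normal-eqn2} with the symmetry \eqref{G-normal-eqn3} gives $G(\cdot,m+\rho)\equiv 0$, whence $G^{\rm c}(\cdot,m+\rho) = -G^{\rm h}(\cdot,m+\rho)$ by \eqref{Gc}. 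Feeding this together with \eqref{Gc-eqn1} into the definition of $U^{\rm c}$ shows that $U^{\rm c}$ is discrete harmonic off $\Gamma_0$ (the point source of $-G^{\rm h}(\cdot,m+\rho)$ sits at $m+\rho \in \Gamma_0$ and is therefore invisible to $\Delta_{\rm d}$ on $\Z^2\setminus\Gamma_0$), while on the crack \eqref{Gc-eqn2} yields the explicit boundary data $U^{\rm c}(l_0) = -U^{\rm h}(l_0)$ with $U^{\rm h}(l_0):= G^{\rm h}(l_0,m+\rho) - G^{\rm h}(l_0,m)$.

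Next I would apply the Discrete Maximum Principle (Lemma~\ref{DMP}) to $U^{\rm c}$ on $\overline{B_0^r}$ from \eqref{B0r-2}, letting $r\to\infty$ and using that $U^{\rm c}(l)\to 0$ as $|l|\to\infty$ (both $G^{\rm h}(\cdot,m+\rho)$ and $G^{\rm c}(\cdot,m)$ decay, the latter via the tail estimate already exploited in Lemma~\ref{Gc-lemma1}). This reduces everything to the boundary values: by the symmetry of $G^{\rm h}$, $U^{\rm h}(l_0)$ is a nearest-neighbour difference of $G^{\rm h}(\cdot,l_0)$ between $m$ and $m+\rho$, so \eqref{G-normal-est2} gives $|U^{\rm h}(l_0)|\le \tfrac14$ for every $l_0$, with equality precisely when $l_0\in\{m,m+\rho\}$; on $\Gamma_0$ this leaves the single extremal point $l_0=m+\rho$. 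Hence $\max_{l\notin\Gamma_0}|U^{\rm c}(l)| \le \max_{\Gamma_0}|U^{\rm h}| = \tfrac14$, and since $s\notin\Gamma_0$ is an interior point while $U^{\rm c}$ is non-constant, Lemma~\ref{DMP} already yields $|U^{\rm c}(s)|<\tfrac14$.

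The main obstacle is to upgrade this strict inequality to a \emph{uniform} gap $|U^{\rm c}(s)|\le \tfrac14-\epsilon$ with $\epsilon>0$ independent of $m$ and $s$. Unlike in Lemma~\ref{Gc-lemma1}, here the boundary maximum equals $\tfrac14$ exactly and is attained at the single point $m+\rho$, which moreover coincides with one of the two sources; so one cannot read the gap off a look-up of $G^{\rm h}$-values and must instead control the interior decrease as the source pair $\{m,m+\rho\}$ recedes from the crack tip. I would split into two regimes. For $s$ outside a fixed $\omega$-neighbourhood of the sources, the decay estimate established for $\bar\G$ forces $|U^{\rm c}(s)|$ well below $\tfrac14$. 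For $s$ within a fixed radius, I would quantify the maximum-principle drop: the point $l_0^\dagger := m+\rho-e_1$ always lies in $\Gamma_0$ and differs from both $m$ and $m+\rho$, so by translation invariance of $G^{\rm h}$ one has $|U^{\rm h}(l_0^\dagger)| = \tfrac14 - c_0$ for a fixed $c_0>0$ depending only on the bounded relative configuration of $m,m+\rho,l_0^\dagger$; representing $U^{\rm c}(s)$ through the discrete harmonic measure of $\Z^2\setminus\Gamma_0$ and exhibiting one short lattice path from $s$ to $l_0^\dagger$ meeting $\Gamma_0$ only at its endpoint gives a configuration-independent lower bound on the weight carried by $l_0^\dagger$, whence $\tfrac14 - |U^{\rm c}(s)|\ge \epsilon_0 c_0$ for some $\epsilon_0>0$.

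Finally, the remaining configurations are those with the source pair close to the crack tip, where $\Gamma_0$ no longer looks locally like a full line; these form only finitely many relative geometries and can be verified directly, each producing a strictly positive gap. Taking the minimum of all the gaps obtained in the three situations yields the required uniform $\epsilon$, which by the reduction above is exactly the assertion $|G^{\rm c}(m+\rho,s) - G^{\rm c}(m,s)|\le\tfrac14-\epsilon$. I expect the delicate point throughout to be precisely this uniformity, since the extremal boundary value is attained with no slack; everything else follows the template of Lemma~\ref{Gc-lemma1}.
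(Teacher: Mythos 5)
Your reduction via variable symmetry to the one-variable function $U^{\rm c}(l)=G^{\rm c}(l,m+\rho)-G^{\rm c}(l,m)$, and the observation that $G(\cdot,m+\rho)\equiv 0$ so that $U^{\rm c}$ is discrete harmonic off $\Gamma_0$ with boundary data $-U^{\rm h}$, are correct — but this is a genuinely different route from the paper's, and a heavier one. The paper never runs a maximum principle with a source sitting on $\Gamma_0$. It instead uses harmonicity of $G^{\rm c}(\cdot,s)$ at the single interior point $m$ (equation \eqref{Gc-eqn1}) to write the one difference that crosses into $\Gamma_0$ as minus the sum of the three differences $G^{\rm c}(m+\sigma,s)-G^{\rm c}(m,s)$, $\sigma\neq\rho$, each joining two points off the crack; by variable symmetry each is a quantity already controlled in Lemma~\ref{Gc-lemma1}, with explicit bounds $\tfrac1\pi-\tfrac14$, $\tfrac1\pi-\tfrac14$ and $\tfrac34-\tfrac2\pi$ summing to exactly $\tfrac14$ and a uniform strict gap inherited from \eqref{uc-bound}. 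The whole lemma becomes a four-line triangle inequality, with all the uniformity delegated to a result already proved.

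What your approach buys is independence from the numerical coincidence $2(\tfrac1\pi-\tfrac14)+(\tfrac34-\tfrac2\pi)=\tfrac14$; what it costs is exactly the step you flag as delicate, and that step is only sketched. Since your boundary maximum is $\tfrac14$ with no slack, the entire content of the lemma sits in the uniform interior drop. I believe it is completable — the harmonic-measure representation is legitimate (recurrence of the 2D walk makes $\omega_s$ a probability measure on $\Gamma_0$), a path of bounded length from $s$ to $l_0^{\dagger}=m+\rho-e_1$ avoiding $\Gamma_0$ except at its endpoint gives $\omega_s(l_0^{\dagger})\ge 4^{-L}$, and $\tfrac14-|U^{\rm h}(l_0^{\dagger})|=\tfrac12-\tfrac1\pi>0$ by translation invariance of $G^{\rm h}$ — but two points need care. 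First, the regime split must be taken in the \emph{Euclidean} metric: ``near'' must mean $|s-m|\le R_0$ so the path to $l_0^{\dagger}$ has bounded length (an $\omega$-neighbourhood of $m$ contains points arbitrarily far away in $\Z^2$ when $m$ is deep along the crack), while for $|s-m|>R_0$ one checks that both $|G(m,s)|$ and $|U^{\rm h}(s)|$ are small. On that point your justification of the decay of $U^{\rm c}$ is off: neither $G^{\rm h}(\cdot,m+\rho)$ nor $G^{\rm c}(\cdot,m)$ decays individually (each diverges logarithmically); only the combination decays because the logarithms cancel. Second, the near-tip configurations you defer to a finite check are already covered, since $l_0^{\dagger}$ always lies in $\Gamma_0$ and $U^{\rm h}$ depends only on relative positions. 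So: a workable alternative, but the uniform $\epsilon$ still has to be written out, whereas the paper's argument gets it for free.
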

\begin{proof}
With $m\not\in\Gamma_0$, \eqref{Gc-eqn1} implies that
\[
\sum_{\sigma \in \Rc} \left(G^{\rm c}(m+\sigma,s)- G^{\rm c}(m,s)\right) = 0,
\]
With $\rho \in \Rc$, it thus follows that
\[
|G^{\rm c}(m+\rho,s) - G^{\rm c}(m,s)| \leq \sum_{\substack{\sigma \in \Rc \\ \sigma \neq \rho}}\left|G^{\rm c}(m+\sigma,s) - G^{\rm c}(m,s)\right|
\]
Since $m+\rho \in\Gamma_0$, then $|m-\Gamma_0| = 1$ and necessarily there exists two distinct $\sigma_1,\sigma_2 \in \Rc$ such that $ 1 \leq |(m+\sigma_i) - \Gamma_0| \leq \sqrt{2}$ and also $\sigma_3 \in \Rc$ such that $|(m+\sigma_3)-\Gamma_0| = 2$. Thus, by exploting variable symmetry in \eqref{Gc-eqn3} and applying statements (1) \& (2) from Lemma \ref{Gc-lemma1} as well as \eqref{uc-bound}, it follows that  
\[
|G^{\rm c}(m+\rho,s) - G^{\rm c}(m,s)| \leq 2\left(\frac{1}{\pi} - \frac{1}{4}\right) + \left(\frac{3}{4}-\frac{2}{\pi}\right) - 3\epsilon = \frac{1}{4} - \tilde{\epsilon},
\]
where $\tilde{\epsilon} = 3\epsilon$. 
\end{proof}
This concludes the proof of the inequality in \eqref{thm:G-2}.
\begin{remark}
A similar albeit more involved decomposition-based argument can be applied directly to Green's function on the dual lattice manifold. The additional complications arise from the fact that the mapping 
\[
m_0\in  \Gamma_0,\quad  m_0 \mapsto G^{\rm h}(m_0,s)
\]
is not null, and hence neither is $G^{\rm c}$, thus the underlying requirement that these functions preserve crack reflection symmetry create additional issues near $\Gamma_0^{\Mc}$.   
\end{remark}

\subsection{Proof of Theorem \ref{thm:K0}}\label{p-K0}
Using the results about the Green's function from Section \ref{sec:G-proof}, Theorem \ref{thm:K0} can now be proven. The essence of the argument is as in \cite[Section 4.5]{H17}

Suppose $y_{\mu}$ from Theorem \ref{thm:K0} exists. As discussed after Definition \ref{def:stability}, any $\alpha \in [\pmb{d}y_{\mu}]$ necessarily satisfies $\|\alpha\|_{\infty} < \frac{1}{2}$, which ensures the set $[\pmb{d}y_{\mu}]$ admits only one function. If $\tilde{y} \in \Vsc_{\rm c}(\Mc_0)$ then for $t \in \R$ sufficiently small, $\|\alpha + t\pmb{d}\tilde{y}\|_{\infty} < \frac{1}{2}$ as well and hence
\[
\E(y_{\mu}+t\tilde{y},y_{\mu}) = \int_{\Mc_1} \left(\lambda t\, \alpha\, \pmb{d}\tilde{y} + \frac{\lambda t^2}{2}(\pmb{d}\tilde{y})^2 \right) \geq 0,
\]
which is only possible if 
\[
(\alpha,\pmb{d}\tilde{y}) = 0\quad\forall \tilde{y} \in \Vsc_{\rm c}(\Mc_0),
\] 
with the inner product introduced in \eqref{def-inner}. By the integration by parts formula in \eqref{def-int-parts}, it thus holds that $\pmb{\delta}\alpha  = 0$. Also, by definition $\pmb{d}\alpha = \mu$. 

By duality described in Section \eqref{sec:dual_lmc}, $\alpha \in \Vsc_{\rm s}(\Mc_1)$ gives rise to $\alpha^* \in \Vsc_{\rm s}(\Mc_1^*)$, which satisfies 
\begin{equation}\label{alphastar}
\pmb{\delta}^*\alpha^* = \mu^*\;\text{ and } \pmb{d}^*\alpha^* = 0,
\end{equation}
where $\mu^* \in \Vsc_{\rm s}(\Mc_0^*)$ is given by $\mu^* = \sum_{i=1}^m b_i(\mathbb{1}_{e_i^*} + \mathbb{1}_{(e_i^*)'})$. The second indicator function appears in this form due to \eqref{wc*-refl}.

It can be readily checked that $\alpha^* := \pmb{d}^* \G_{\mu^*}$ from \eqref{Gmustar} indeed satisfies \eqref{alphastar}. Since $\|\alpha\|_{\infty} = \|\alpha^*\|_{\infty}$, it thus remains to show that  $\|\pmb{d}^* \G_{\mu^*}\|_{\infty} < \frac{1}{2}$. 

By construction, if $e \in \Mc_1^*$ with $e \not\subset \Mc^{\pm}$, then $\pmb{d^*_1}\G(e,e^*_i) = 0$ for any $i \in \{1,\dots,m\}$.

On the other hand, for every $e \in \Mc_1^{*}$ such that $e \subset \Mc^{\pm}$, one can always find ${j \in \{1,\dots,m\}}$ for which 
\[
{\corr \max\{D(e,e^*_j),D_{\Wc}(e,e^*_j)\} = \min_i \max\{D(e,e^*_i),D_{\Wc}(e,e^*_i)\}.  }
\]
It follows from \eqref{thm:G-2} that there exists $\epsilon_0>0$ such that
\begin{equation}\label{G-eps0}
\left|\pmb{d^*_1}\G(e,e^*_j)\right| \leq \frac{1}{2}-(m-1)\epsilon_0,
\end{equation}
where $m \in \N$ is the number of dislocations in the definition of $\mathscr{B}_n$ in \eqref{Bn}.

{\corr As will be shown in Section~\ref{sec:min-sep-prop} in the Appendix, }the minimum separation {\corr property} in the definition of $\mathscr{B}_n$ implies that {\corr $e$ is far away from dislocation cores other than $e_j^*$, hence,} for $\epsilon_0$ in \eqref{G-eps0}, there exists $N \in \N$ such that if $n > N$, then
\begin{equation}\label{G-eps0-2}
|\pmb{d_1^*} \G(e,e_i^*)| < \epsilon_0\;\text{ for } i \neq j.
\end{equation}
Combining \eqref{G-eps0} and \eqref{G-eps0}, it thus follows that 
\[
\|\pmb{d^*}\G_{\mu^*}\|_{\infty} = \sup_{e \in \Mc_1^*}\left|\sum_{i=1}^m b_i \pmb{d_1^*}\G(e,e_i^*)  \right| < \frac{1}{2} - (m-1)\epsilon_0 + (m-1)\epsilon_0 = \frac{1}{2}.
\]

Finally, the fact that there exists $y_{\mu} \in \Vsc_{\rm s}(\Mc_0)$ such that $\alpha \in [\pmb{d}y_{\mu}]$ is a direct  consequence of the argument presented in \cite[Section 4.5]{H17}, which relies on the fact that the lattice manifold complex is path-connected and simply connected. 
\hfill  $\square$

\subsection{Proof of Theorem \ref{thm:Knot0}}\label{p-Knot0}
The proof of the theorem relies on Implicit Function Theorem, which is adapted from \cite[Theorem 22.27]{driver2003analysis}. 
\begin{theorem}\label{thm:IFT}
Suppose $X,Y,Z$ are Banach spaces, $U \subset X \times Y$ is an open set, ${f\,:\, U \to Z}$ is continuously Fr\'echet differentiable in both variables, $(x_0,y_0) \in U$ is such that ${f(x_0,y_0) = 0}$. If the  partial Fre\'echet derivative with respect to $x$ evaluated at $(x_0,y_0)$ and denoted by $D_x F(x_0,y_0) \in \mathcal{L}(X,Z)$ is an isomorphism from $X$ to $Z$, then there exists an open neighbourhood $X_0$ around $x_0$ and $Y_0$ around $y_0$, as well as a Fr\'echet differentiable function $g\,:\, Y_0 \to X_0$ such that for all $(x,y) \in X_0 \times Y_0$, 
\[
f(x,y) = 0 \iff x = g(y).
\]
\end{theorem}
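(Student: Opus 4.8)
The plan is to prove this via the Banach fixed-point theorem, reducing the solvability of $f(x,y)=0$ in the variable $x$ to finding a fixed point of a suitable contraction. After a translation of coordinates I would assume without loss of generality that $x_0=0$, $y_0=0$ and $f(0,0)=0$, and set $A := D_x f(0,0) \in \mathcal{L}(X,Z)$. By hypothesis $A$ is a continuous bijection between Banach spaces, so the bounded inverse theorem (open mapping theorem) gives $A^{-1}\in\mathcal{L}(Z,X)$. The key reformulation is that, for each fixed $y$, the equation $f(x,y)=0$ is equivalent to $x$ being a fixed point of the map $\Phi_y(x) := x - A^{-1} f(x,y)$, since $A^{-1}$ is injective.

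The central estimates come from the continuity of $D_x f$ at the origin. Differentiating in $x$ gives $D_x \Phi_y(x) = A^{-1}\bigl(A - D_x f(x,y)\bigr)$, so by continuity I can choose radii $r,\rho>0$ with $\|A - D_x f(x,y)\| \le (2\|A^{-1}\|)^{-1}$ whenever $\|x\|\le r$ and $\|y\|\le\rho$; this forces $\|D_x\Phi_y(x)\|\le \tfrac12$ and hence, by the mean value inequality for Banach-space-valued maps, that $\Phi_y$ is a $\tfrac12$-contraction on the closed ball $\overline{B}_r\subset X$. Shrinking $\rho$ further, using $f(0,0)=0$ and continuity of $f$ so that $\|\Phi_y(0)\| = \|A^{-1}f(0,y)\| \le r/2$, I obtain for $\|x\|\le r$ the bound $\|\Phi_y(x)\|\le \tfrac12\|x\|+r/2\le r$, so $\Phi_y$ maps $\overline{B}_r$ into itself. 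The Banach fixed-point theorem then yields, for each $y$ with $\|y\|\le\rho$, a unique fixed point $x=g(y)\in\overline{B}_r$, defining $g\colon Y_0\to X_0$ with $g(0)=0$ and establishing the equivalence $f(x,y)=0 \iff x=g(y)$ on the product neighbourhood $X_0\times Y_0$.

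It then remains to upgrade $g$ to a Fréchet differentiable map. Local Lipschitz continuity of $g$ follows from the uniform $\tfrac12$-contraction estimate together with the Lipschitz dependence of $\Phi_y(x)$ on $y$: subtracting two fixed-point identities and using the contraction bound absorbs the $x$-difference into the left-hand side, yielding $\|g(y+h)-g(y)\|\lesssim\|h\|$. For differentiability, since the invertible operators form an open subset of $\mathcal{L}(X,Z)$ (via the Neumann series), the smallness of $\|A - D_x f(x,y)\|$ guarantees that $D_x f(g(y),y)$ is itself an isomorphism for all small $y$, with uniformly bounded inverse. The candidate derivative is $Dg(y) = -\bigl(D_x f(g(y),y)\bigr)^{-1} D_y f(g(y),y)$, obtained by formally differentiating the identity $f(g(y),y)\equiv 0$.

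The main obstacle is precisely this last step: verifying rigorously that the candidate linear map is the genuine Fréchet derivative of $g$. I would expand $f(g(y+h),y+h)-f(g(y),y)=0$ using Fréchet differentiability of $f$ in both arguments, with remainder of order $o(\|g(y+h)-g(y)\|+\|h\|)$, solve for $g(y+h)-g(y)$ by applying the bounded inverse of $D_x f(g(y),y)$, and then invoke the already-established bound $\|g(y+h)-g(y)\|\lesssim\|h\|$ to convert the remainder into $o(\|h\|)$. Continuity of $Dg$ then follows from continuity of $g$, of $D_x f$ and $D_y f$, and of operator inversion on the open set of isomorphisms. Everything preceding this—existence, uniqueness, and continuity of $g$—is a routine consequence of the contraction setup, so this differentiability verification is the only genuinely delicate part of the argument.
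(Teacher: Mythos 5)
Your argument is correct, but there is nothing in the paper to compare it against: the paper does not prove Theorem~\ref{thm:IFT} at all --- it imports it from \cite[Theorem 22.27]{driver2003analysis} and immediately applies it in the proof of Theorem~\ref{thm:Knot0}. Your blind proof is the classical direct contraction-mapping argument; the cited source, like most texts, instead deduces the implicit function theorem from the inverse function theorem applied to the augmented map $(x,y) \mapsto (f(x,y),y)$, where the same Banach fixed-point estimate appears one level up. The direct route you take is, if anything, more economical for the statement as given, since it avoids constructing the local inverse of the augmented map. Two details deserve tightening. First, your fixed point a priori lies only in the \emph{closed} ball $\overline{B}_r$, while the theorem requires $g(Y_0) \subset X_0$ with $X_0$ open; since $\|g(y)\| \le \tfrac{1}{2}\|g(y)\| + \|A^{-1}f(0,y)\|$ gives $\|g(y)\| \le 2\|A^{-1}f(0,y)\|$, shrinking $\rho$ so that $\|A^{-1}f(0,y)\| \le r/4$ places $g(y)$ strictly inside the ball, and combined with uniqueness of the fixed point on $\overline{B}_r$ this makes the stated equivalence hold literally on all of $X_0 \times Y_0$. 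Second, in the differentiability step you expand $f(g(y+h),y+h) - f(g(y),y)$ with a joint remainder $o(\|g(y+h)-g(y)\| + \|h\|)$; this presupposes \emph{joint} Fr\'echet differentiability, which the hypothesis (``differentiable in both variables'') delivers only through the standard lemma that continuity of both partial Fr\'echet derivatives implies joint $C^1$ --- a sentence invoking that lemma is needed, since separate differentiability alone would not justify the expansion. With these repairs your proof is complete and in fact gives slightly more than the statement asks, namely continuity of $Dg$ and hence $g \in C^1$.
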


The theorem about near-crack-tip plasticity equilibria can now be proven. 
\begin{proof}[Proof of Theorem \ref{thm:Knot0}]
Define 
\[
\mathscr{E}(u,K) := \E(K\hat{u}_{\rm c} + y_{\mu} + u, K\hat{u}_{\rm c} + y_{\mu}).
\]
This energy difference is clearly well-defined for any $K \in \R$ and $u \in \Vsc_{\rm c}(\Mc_0)$. 

However, if $K$ is small enough, then
\[
\|K\pmb{d}\hat{u}_{\rm c} + \alpha\|_{\infty} < \frac{1}{2},
\]
where $\alpha \in [\pmb{d}y_{\mu}]$, which implies that for $K$ small enough $\delta_u \mathscr{E}(0,K)$ is a bounded linear functional on $\Hcc(\Mc_0)$, since if ${v \in \Hcc(\Mc_0)}$, then 
\[
\<{\delta_u \mathscr{E}(0,K)}{v}= \int_{\Mc_1} \psi'(K \pmb{d}\hat{u}_{\rm c} + \pmb{d}y_{\mu})\pmb{d}v = \int_{\Mc_1} K\pmb{d}\hat{u}_{\rm c}\pmb{d}v + \alpha \pmb{d}v  = \int_{\Mc_1}K\pmb{d}\hat{u}_{\rm c}\pmb{d}v,
\]
where the last equality follows from the fact that $y_{\mu}$ is a locally stable equilibrium and the fact that $\Vsc_{\rm c}(\Mc_0)$ is dense in $\Hcc(\Mc_0)$, which follows directly from \cite[Proposition 9.]{2012-lattint}.

The argument presented in the proof of \cite[Theorem 2.3]{2018-antiplanecrack}, based on the fact that the non-manifold version of $\hat{u}_{\rm c}$ satisfies the PDE in \eqref{PDE-c}, can thus be applied, ensuring that $|\<{\delta_u \mathscr{E}(0,K)}{v}| \lesssim \|\pmb{d}v\|_{\mathscr{L}^2(\Mc_0)}$. It hence follows from \cite[Lemma 4.1]{2013-disl} that $\mathscr{E}$ can be uniquely extended to $\Hcc(\Mc_0) \times I_{\delta}$ where $I_{\delta} := \{ K \in \R\;|\; -\delta < K < \delta\}$, for some $\delta$ small enough. 

If the domain of $\mathscr{E}$ is further restricted to an open subset
\[
U_{\delta} := \{(u,K) \in \Hcc(\Mc_0) \times I_{\delta}\;|\; \|\pmb{d}u\|_{\mathscr{L}^2(\Mc_0)} < \delta \},
\]
and $\delta$ taken to be sufficiently small, then for any $(u,K) \in U_{\delta}$ it holds that 
\[
\|K\pmb{d}\hat{u}_{\rm c} + \pmb{d}u + \alpha \|_{\infty} < \frac{1}{2},
\]
from which it is straightforward to conclude that $\mathscr{E} \in C^2(U_{\delta},\R)$. 

Furthermore, if $K_0 = 0$ and $\bar{u}_0 := 0 \in \Hcc(\Mc_0)$,  then by virtue of Theorem \ref{thm:K0} asserting that $y_{\mu}$ is a locally stable equilibrium, it follows that $\delta_u \mathscr{E}(\bar{u}_0,K_0) = 0$ and further
\begin{equation}\label{d2E-stab}
\<{\delta^2_u \mathscr{E}(\bar{u}_0,K_0)v}{v} = \lambda \|\pmb{d}v\|^2_{\mathscr{L}^2(\Mc_0)}, 
\end{equation}
where $\lambda > 0$ is the constant in the definition $\psi$ in \eqref{psi}. 

To invoke Theorem \ref{thm:IFT} (Implicit Function Theorem), it remains to set 
\[
f:= \delta_u \mathscr{E}\,:\, U_{\delta} \to (\Hcc(\Mc_0))'
\]
and observe that 
\[
f(\bar{u}_0,K_0) = 0\,\text{ and } D_{u}f(\bar{u}_0,K_0) = \delta^2_u \mathscr{E}(\bar{u}_0,K_0),
\]
with $D_u f(\bar{u}_0,K_0)$ an isomorphism due to \eqref{d2E-stab}. 

With all the conditions of the Implicit Function Theorem satisfied, it thus follows that there exists a locally unique path $\{(\bar{u}(K),K) \in \Hcc(\Mc_0) \times \R\;|\; K \in (-\delta_0,\delta_0)\}$, for some $\delta_0 >0$ small enough, such that $\delta_u \mathscr{E}(\bar{u}(K),K) = 0$.    

The fact that $y_{\mu}^{\rm c}(K) = K\hat{u}_{\rm c} + y_{\mu} + \bar{u}(K) \in \Vsc_{\rm s}(\Mc_0)$ is a locally stable equilibrium in the sense of Definition \ref{def:stability} follows naturally from the fact that $\delta_u \mathscr{E}(\bar{u}(K),K) = 0$ and $\|\pmb{d}\tilde{y}\|_{\infty} \leq \|\pmb{d}\tilde{y}\|_{\mathscr{L}^2(\Mc_0)}$.

Furthermore, it can be readily checked by inspecting the definition of $\hat{u}_{\rm c}$ and recalling that if $v \in \Hcc(\Mc_0)$ then $|v(e)| \lesssim \log|e|$, which follows directly from \cite[Lemma 4.3]{2019-antiplanecrack}, that the far-field behaviour of $\hat{u}_{\rm c}$ is sufficiently dominant to conclude that if $K > 0$, then $y_{\mu}^{\rm c}(K) \in \Vsc^{\rm crack}$, which concludes the proof.
\end{proof}

\vspace{1cm}
\paragraph{Acknowledgements} The author would like to thank Tom Hudson, Christoph Ortner, Patrick van Meurs, Masato Kimura and Angela Mihai for their continuing support throughout the work on this manuscript. 

\vspace{1cm}
\appendix
\section{Orientation, duality, crack reflection symmetry and measuring distances}\label{app1}
In this section the notions of orientation, duality and crack reflection symmetry, as visually introduced through Figure \ref{fig:3}, Figure \ref{fig:9} and \ref{fig:6} will now be introduced rigorously. 

This is then followed by a discussion about measuring distances in the lattice manifold complex framework.  
\subsection{Primal lattice manifold complex}
A $0$-cell $a \in \Wc_0$ is uniquely represented by 
\begin{equation}\label{wc0}
a = (x,y,o) \in \R^2 \times \{+1,-1\},
\end{equation}
that is by its position in $\Wc = \R^2$ and its orientation. The reversely oriented $-a$ is thus represented by $-a = (x,y,-o)$ and $\chi(a) = \chi(-a) = (x,y) \in \R^2$.

The orientation of a $0$-cell, though seemingly artificial, is in fact crucial for the notion of duality discussed in Section \ref{sec:dual_lmc} and also for the boundary operator $\partial$ discussed in Section \ref{sec:latmancom}, which dictates the following description of $1$-cells. 

Given $e \in \Wc_1$, there exists a unique collection of two positively oriented $0$-cells $a_1,a_2 \in \Wc_0$ such that $\partial e = -a_1 \cup a_2$, thus providing $1$-cells with concise notation in terms positively oriented $0$-cells with $e = [a_1,a_2]$ and $-e = [a_2,a_1]$. Since $e$ can be interpreted as an arrow with initial point $a_1$ and terminal point $a_2$, it is thus not needed to explicitly specify whether a given $1$-cell is positively oriented. Furthermore, $\chi([a_1,a_2]) = \chi([a_2,a_1])$ is the open curved line in $\R^2$, not containing $\chi(a_1)$ and $\chi(a_2)$.

Likewise, given $E \in \Wc_2$ (excluding the special central $2$-cell, which can be handled similarly) there exists a unique (up to permutations and orientation reversal) collection of $1$-cells $e_1,\dots,e_4 \in \Wc_1$ such that $\partial E = e_1 \cup e_2 \cup e_3 \cup e_4$ and further there exists a unique (up to permutations) collection of positively oriented $a_1,\dots,a_4$ such that 
\[
e_1 = [a_1,a_2], e_2 = [a_2,a_3], e_3 = [a_3,a_4], e_4 = [a_4,a_1],
\]
 i.e. a loop consisting of four $0$-cells.   Thus positively oriented $0$-cells provide a convenient notation for $2$-cells through $E = [a_1,a_2,a_3,a_4]$. It can be also easily seen that $-E = [a_1,a_4,a_3,a_2]$, thus to avoid ambiguity, a convention is adopted that a $2$-cell $E \in \Wc_2$ is positively oriented if it is a {\it clockwise }loop. Similarly, $\chi(E) = \chi(-E)$ is the open subset of $\R^2$ bounded by curved lines $\chi([a_1,a_2])$, $\chi([a_2,a_3])$, $\chi([a_3,a_4])$, $\chi([a_4,a_1])$. Note that a similar analysis applies to the central $2$-cell $E_0$, but with eight positively oriented $0$-cells instead of four.

The notion of crack reflection symmetry will now be explicitly introduced, assigning to each $e \in \Wc_p$ its crack reflection $e' \in \Wc_p$ as follows. Figure \ref{fig::2} provides a useful illustration.

For $a \in \Wc_0$ with $a = (x,y,o) \in \R^2\times \{+1,-1\}$ as in \eqref{wc0}, its reflection is chosen to be defined as 
\begin{equation}\label{wc0-refl}
a' := (-x,y,o),
\end{equation}
thus a positively (resp. negatively) oriented $0$-cell is mapped to a positively (resp. negatively) oriented $0$-cell. 

Likewise for $e \in \Wc_1$ given by $e = [a_1,a_2]$ and $E \in \Wc_2$ by $E = [a_1,a_2,a_3,a_4]$, 
\begin{equation}\label{wc12-refl}
e' := [a_1',a_2'], \quad E' := [a_1',a_2',a_3',a_4'],
\end{equation}
Note that for any $e \in \Wc_p$, its reflection $e'$ is unique and $(e')' = e$. In particular, if $E$ is positively oriented, then $E'$ is negatively oriented. 

The inverse of the isomorphism $\omega_{\Mc}$ defined in \eqref{om_mc} can be extended to apply to oriented $p$-cells by simply saying that it preserves orientation, thus for any $e \in {\corr \Wc_p}$ there exists a unique $\omega_{\Mc}^{-1}(e) \in {\corr \Mc_p}$ {\corr and in particular } ${\corr \omega_{\Mc}^{-1}(-e) = -\omega_{\Mc}^{-1}(e)}$. 

This ensures that both the explicit representations of oriented $p$-cells and the notion of crack reflection symmetry applies to the $\Mc$-manifold description, as illustrated in Figure~\ref{fig::1}. In particular it is noted that any $a \in \Mc_0$ can be uniquely represented by 
\begin{equation}\label{mc0-a}
a = ((x,y),b,o) \in \R^2 \times \{+1,-1\} \times \{+1,-1\},
\end{equation}
with $b$ denoting the branch of the manifold and $o$ the orientation. If $e \in \Mc_p$ then $\chi(e)$ is defined as $\chi(e):= \omega_{\Mc}^{-1} \left(\chi(\omega_{\Mc}(e))\right)$, so in particular $\chi(a) = ((x,y),b)$. 
 
Similarly, a crack reflection of $e \in \Mc_p$ is defined as
\[
e' := \omega_{\Mc}^{-1}\left(\omega_{\Mc}(e)'\right),
\]
which in particular implies that for $a$ in \eqref{mc0-a}, $a' = ((x,-y),-b,o)$. 

Crucially this indeed ensures that $(\Mc_p^+)' = \Mc_p^-$ (defined in \eqref{Mcp_pm}) and $(\Wc_p^+)' = \Wc_p^-$ (defined in \eqref{Wcp_pm}).
\subsection{Dual lattice manifold complex}
In this section, the notion of orientation and the action of the isomorphism $^*$ from \eqref{iso} (visually described in Figure \ref{fig:9}) will be rigorously defined.

If $E \in \Wc_2$ is positively ({\it clockwise}) oriented then $E^* \in \Wc_0^*$ is a positively oriented $0$-cell located within the loop that $E$ represents. If $e \in \Wc_1$, then $e^* \in \Wc_1^*$ is obtained by a {\it right-hand} turn in the arrow interpretation. Finally, a positively oriented $0$-cell $a \in \Wc_0$ is mapped to a positively ({\it anti-clockwise}) oriented $2$-cell $a^* \in \Wc_2^*$. A visual depiction of duality is provided in Figure \ref{fig::7}. 

Note that on the original lattice manifold complex a $2$-cell is described as positively oriented if it has clockwise orientation, whereas on the dual lattice manifold an anti-clockwise orientation of a $2$-cell is considered positive orientation. This change ensures that \eqref{def-bond-op-dual} holds true. 

With orientation and duality introduced, the notion of crack reflection symmetry enforced through \eqref{wc*-refl}, showcased in Figure \ref{fig:6}, can now be discussed. 

In the $\Wc^*$-manifold description, as shown in Figure \ref{fig::5}, if, as discussed in \eqref{wc0}, $a^* \in \Wc_0^*$ is given by 
\[
a^* = (x^*,y^*,b^*) \in \R^2 \times \{+1,-1\},
\]
then the crack reflection from \eqref{wc*-refl} implies that $(a^*)' = (-x^*,y^*,-b^*)$, i.e., unlike for the $0$-cells of the original complex, the orientation is swapped and now $-(a^*)'$ is the positively orientated $0$-cell. As a result, a positively oriented $0$-cell description of $1$-cells and $2$-cells captures reflections on the dual complex as follows. 

For $e^* \in \Wc_1^*$ with $e^* = [a_1^*,a_2^*]$ and $E^* \in \Wc_2^*$ with $E^* = [a_1^*,a_2^*,a_3^*,a_4^*]$, the reflections are given by 
\[
(e^*)' = [-(a_2^*)',-(a_1^*)'],\quad (E^*)' = [-(a_2^*)', -(a_1^*)',-(a_4^*)',-(a_3^*)],
\]
in particular with $(E^*)'$ remaining positively (anticlockwise) orientated. 

As in the case of the primal lattice manifold complex,  the mapping $\omega_{\Mc}$ from \eqref{om_mc} ensures that this discussion also applies to the $\Mc^*$-manifold description, as shown in Figure \ref{fig::4}.

\subsection{Measuring distances}\label{sec:distance}
With the lattice manifold complex admitting two isomorphic descriptions through manifolds $\Wc$ and $\Mc$, it is important to introduce several relevant notions of a distance, with care taken to account both for the geometry and for the structure of the complex.

If $x,y, 0 \in \R^2$ then $D(x,y)$ is the Euclidean distance (note the capital letter to distinguish it from the differential operator $\pmb{d}$)  and $|x|:=D(x,0)$ is the Euclidean norm. The shortest distance between  $A \subset \R^2$ and $B \subset \R^2$ is 
\begin{equation}\label{set-dist}
D(A,B):= \inf \{\,D(x,y)\;|\; x \in A,\,y \in B\}.
\end{equation}

With {\it the space} $\Wc$ coinciding with $\R^2$,  these definitions thus apply to elements of $\Wc$. When measuring distances, the additional notion of orientation of the $p$-cells in {\it the complex} $\Wc$ (or its dual $\Wc^*$) is disregarded with the help of the function $\chi$ introduced in Section~\ref{sec:latmancom}, noting that for any $e \in \Wc_p$ it holds that $\chi(e) \subset \R^2$. Thus, if further $\tilde{e} \in \Wc_q$, the distances between cells in the complex and its dual are defined as 
\[
D(e,\tilde{e}) := D(\chi(e),\chi(\tilde{e}))\,\text{ and } D(e^*,\tilde{e}) := D(\chi(e^*),\chi(\tilde{e})),
\]
with the shortest distance between two subsets of $\Wc = \R^2$ defined in \eqref{set-dist}. Likewise, it will be useful to distinguish $|e| := D(e,E^*_0)$, where $E_0 \in \Wc_2$ is the central $2$-cell. 

Now let $k,l \in \Mc$ with $k = (k_x,k_b)$ and $l = (l_x,l_b)$. The basic notion of a distance on $\Mc$ used throughout disregards the branch indicators $k_b, l_b \in \{\pm 1\}$ and is given by { \corr
\begin{equation}\label{D-Mc-1}
D(k,l):= D(k_x,l_x),
\end{equation}}
i.e. the Euclidean distance between $k_x, l_x \in \R^2$. This gives rise to a semi-norm $|k|:= |k_x|$. 

{\corr An alternative relevant notion of a distance on $\Mc$ that is suitable for discussing the distance between dislocations cores is
\begin{equation}\label{D-Mc-2}
D_{\Wc}(k,l) := D(\omega_{\Mc}(k),\omega_{\Mc}(l)),
\end{equation}
with the associated (semi-) norm $|k|_{\Wc} := |\omega_{\Mc}(k)| = |k|^{1/2}$. It can be verified that the following useful identity holds, 
\begin{equation}\label{c-sqrt-id}
D(k,l) = D_{\Wc}(k,l)\;\tilde{D}_{\Wc}(k,l),
\end{equation}
where $\tilde{D}_{\Wc}(k,l) := D(\omega_{\Mc}(k),-\omega_{\Mc}(l))$.

The notion of orientation of $p$-cells in $\Mc$ is again disregarded {\corr as far as distances are concerned} and thus, for any $e \in \Mc_p$ (or $e \in \Mc_p^*$) and $\tilde{e} \in \Mc_q$, 
\[
D(e,\tilde{e}) := \inf \{\,D(k,l)\;|\; k \in \chi(e),\,l \in \chi(\tilde{e}) \},
\]
with same holding for $D_{\Wc}(e,\tilde{e})$. Likewise,
\[
|e|:= D(e,\omega_{\Mc}^{-1}(E_0^*)),
\]
is a semi-norm measuring the distance of a $p$-cell from the origin, with a similar definition for $|e|_{\Wc}$. 
}
\subsubsection{\corr {Minimum separation property}}\label{sec:min-sep-prop}
{\corr Consider two dislocation cores represented as positively oriented $e,\tilde{e} \in \Mc_2^+$. The origin of the minimum separation property stemming from the definition of $\mathscr{B}_n$ in \eqref{Bn}, concerns the idea that the strain field associated with the core at $e$ should be negligible near the core $\tilde{e}$. As established in Theorem \ref{thm:K0}, the strain fields around dislocations on the primal lattice manifold complex are equivalent to the strain fields around source points on its dual, thus it is equivalent to require that there exists (small) $\epsilon_0 > 0$ such that  
\begin{equation}\label{eq-min-sep-1}
|\pmb{d_1^*} \G(e^*,\tilde{e}^*)| < \epsilon_0,
\end{equation}
where $\G$ is the Green's function on the dual lattice manifold from Theorem \ref{thm:G}. This is used in the proof of Theorem \ref{thm:K0} in \eqref{G-eps0-2}. 

The decay estimate of $\G$ from \eqref{thm:G-1} implies that \eqref{eq-min-sep-1} can be restated as requiring that there exists (large enough) $\delta_1 > 0$, such that 
\begin{equation}\label{eq-min-sep-2}
|e|_{\Wc}\,D_{\Wc}(e,\tilde{e}) > \delta_1.
\end{equation}

It is more natural, however, to phrase the notion of a minimum separation between $e$ and $\tilde{e}$ in terms the distances on $\Mc$ introduced in \eqref{D-Mc-1} and \eqref{D-Mc-2}.  It turns out, however, that in this context, it is insufficient to  consider either $D(\cdot,\cdot)$ or $D_{\Wc}(\cdot,\cdot)$ alone, due to the following two examples. 

Firstly, consider two sequences of positively oriented $\{e_k\},\{\tilde{e}_k\} \subset \Mc_0^{*,+}$ with ${\chi(e_k) = (-k,1)}$ and ${\chi(\tilde{e}_k) = (-k,-1)}$. It can be readily established that 
\[
D(e_k,\tilde{e}_k) = 2\quad \forall\, k,\quad\text{ whereas }\quad D_{\Wc}(e_k,\tilde{e}_k) \to \infty\;\text{ as }\;k \to \infty.
\]
This implies that only the distance $D_{\Wc}(\cdot,\cdot)$ correctly identifies that the cells are separated by the crack surface.

On the other hand, for two sequences of positively oriented $\{a_k\},\{\tilde{a}_k\} \subset \Mc_0^{*,+}$ with ${\chi(a_k) = (k,\delta)}$ and ${\chi(\tilde{a}_k) = (k,-\delta)}$ where $\delta >0$ is arbitrarily large, it holds, for all $\delta >0$, that
\[
D(a_k,\tilde{a}_k) = 2\delta,\quad\text{ whereas }\quad D_{\Wc}(a_k,\tilde{a}_k) \to 0\;\text{ as }\;k \to \infty.
\]
Thus, this time, it is $D(\cdot,\cdot)$ that correctly identifies the separation between the cores.

To rectify this, the notion of minimum separation in \eqref{Bn} is phrased as a maximum over $D(\cdot,\cdot)$ and $D_{\Wc}(\cdot,\cdot)$ and to fit the phrasing in \eqref{eq-min-sep-2}, it can be restated as requiring that there exists a (large enough) $\delta_2 > 0$ such that
\begin{equation}\label{eq-min-sep-3}
\max\{D(e,\tilde{e}),D_{\Wc}(e,\tilde{e})\} > \delta_2.
\end{equation}

It thus remains to show that \eqref{eq-min-sep-3} implies \eqref{eq-min-sep-2}, which will ensure \eqref{G-eps0-2} holds true, as necessary to complete the proof of Theorem~\ref{thm:K0}.
\begin{lemma}
For any large enough $\delta_1 > 0$, there exists $\delta_2 > 0$ such that, for any $e,\tilde{e} \in \Mc_0^+$,
\[
\Big( \max\{D(e,\tilde{e}),D_{\Wc}(e,\tilde{e})\} > \delta_2 \Big) \implies \Big( |e|_{\Wc}\, D_{\Wc}(e,\tilde{e}) > \delta_1\Big).
\]
\end{lemma}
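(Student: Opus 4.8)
The plan is to pass to the planar picture via the isomorphism $\omega_{\Mc}$ and to exploit the factorisation identity \eqref{c-sqrt-id}. Set $a := \omega_{\Mc}(e)$ and $b := \omega_{\Mc}(\tilde{e})$; since $e,\tilde{e} \in \Mc_0^+$ both lie in the right half-plane $\Wc^+$, and by definition $|e|_{\Wc} = |a|$ and $D_{\Wc}(e,\tilde{e}) = |a-b|$. The crucial input is that, regarding $a,b$ as complex numbers, $\omega$ is a genuine square root, so $(a-b)(a+b) = a^2 - b^2 = k_x - l_x$; taking moduli recovers exactly $D(e,\tilde{e}) = |a-b|\,|a+b|$, which is the content of \eqref{c-sqrt-id} with $\tilde{D}_{\Wc}(e,\tilde{e}) = |a+b|$. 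Thus the lemma reduces to a purely two-dimensional inequality involving only $|a|$, $|a-b|$ and $|a+b|$.

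The second ingredient is a uniform positive lower bound $|e|_{\Wc} \geq c_0$ on $\Mc_0^+$. This holds because $\chi(\Mc_0^+)$ is the shifted lattice $\Z^2 - (\tfrac12,\tfrac12)$, whose points are at Euclidean distance at least $\tfrac{1}{\sqrt{2}}$ from the origin, so $|a| = |k_x|^{1/2} \geq 2^{-1/4} =: c_0$. I would then argue by contrapositive: assume $|e|_{\Wc}\,D_{\Wc}(e,\tilde{e}) = |a|\,|a-b| \leq \delta_1$ and deduce a uniform bound on $\max\{D(e,\tilde{e}), D_{\Wc}(e,\tilde{e})\}$. From $|a| \geq c_0$ one immediately gets $D_{\Wc}(e,\tilde{e}) = |a-b| \leq \delta_1/c_0$, so the $D_{\Wc}$-factor is already controlled.

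For the remaining $D(e,\tilde{e})$-factor the triangle inequality gives $|a+b| \leq |a| + |b| \leq 2|a| + |a-b|$, whence
\[
D(e,\tilde{e}) = |a-b|\,|a+b| \leq 2|a|\,|a-b| + |a-b|^2 \leq 2\delta_1 + (\delta_1/c_0)^2.
\]
Choosing $\delta_2 := 2\delta_1 + (\delta_1/c_0)^2$ gives $\max\{D,D_{\Wc}\} \leq \delta_2$, completing the contrapositive; since $c_0 < 1$, this $\delta_2$ dominates $\delta_1/c_0$ once $\delta_1$ is large, which is where the "large enough $\delta_1$" hypothesis is used.

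The only genuinely delicate point — more a step to get right than a real obstacle — is the control of the cross factor $|a+b|$, which is a priori unbounded as the cells recede from the crack tip; this is exactly why one cannot dominate $D(e,\tilde{e})$ by $D_{\Wc}(e,\tilde{e})$ alone. The resolution is that smallness of the product $|a|\,|a-b|$, combined with the lower bound $|a|\geq c_0$, forces $|a-b|$ to be small, after which $|a-b|\,|a+b|$ is reabsorbed into $2|a|\,|a-b| + |a-b|^2$, both terms being directly controlled by the hypothesis. I would flag that the positivity of $c_0$ — the fact that the \emph{primal} lattice avoids the crack tip — is indispensable here, and that the argument would genuinely break for a cell sitting exactly at the origin.
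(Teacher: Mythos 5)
Your proof is correct, and it is organised rather differently from the paper's. The two arguments share the same two ingredients -- the factorisation $D(e,\tilde e)=D_{\Wc}(e,\tilde e)\,\tilde D_{\Wc}(e,\tilde e)$ from \eqref{c-sqrt-id} and the uniform lower bound $|e|_{\Wc}\ge c_0>0$ coming from the primal lattice avoiding the crack tip -- but the paper proves the implication directly and splits into a separate argument for the $D_{\Wc}$-branch of the maximum plus three sub-cases for the $D$-branch ($|e|\ge|\tilde e|$; $|e|\ll|\tilde e|$; $|e|<|\tilde e|$ but comparable), the latter two involving auxiliary constants $c_1,c_2$ and somewhat informal asymptotic reasoning. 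You instead argue by contrapositive and collapse the entire case analysis into the single triangle-inequality bound $|a+b|\le 2|a|+|a-b|$, which turns the hypothesis $|a|\,|a-b|\le\delta_1$ directly into $D\le 2\delta_1+(\delta_1/c_0)^2$ and $D_{\Wc}\le\delta_1/c_0$. What your route buys is brevity, an explicit formula for $\delta_2$, and the elimination of the loosely quantified ``$\ll$'' cases; what the paper's route buys is a direct-implication presentation that makes each geometric regime (cells near the crack tip, cells far out but on opposite sides, cells far out and comparable) individually visible, which is the intuition it wants to convey in Section~\ref{sec:min-sep-prop}. One cosmetic remark: the constant $c_0=2^{-1/4}$ actually satisfies $1/c_0<2$, so $\delta_1/c_0\le 2\delta_1\le\delta_2$ holds for every $\delta_1>0$; the ``large enough $\delta_1$'' hypothesis is therefore not really needed at the point where you invoke it, though carrying it does no harm.
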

\begin{proof}
The first observation is that since $e,\tilde{e} \subset \Mc^+$, neither is equal to the special $2$-cell $\omega_{\Mc}^{-1}(E_0)$ with eight $1$-cells as its boundary and thus there is a constant $c_0 > 0$ such that  $|e| > c_0$ and $|\tilde{e}| > c_0$.

Secondly, for the result to apply to the maximum over two notions of a distance, it has to apply to each separately. The case when one seeks to find $\delta_2$ such that $D_{\Wc}(e,\tilde{e}) > \delta_2$ is immediate -- 
given $\delta_1 > 0$, it suffices to set $\delta_2 = \frac{\delta_1}{\sqrt{c_0}}$, since
\[
|e|_{\Wc}\,D_{\Wc}(e,\tilde{e}) > \sqrt{c_0}\,D_{\Wc}(e,\tilde{e}) > \sqrt{c_0} \delta_2 = \delta_1,
\]
where the first inequality is due to the first observation at the start of this proof.

The other case, in which one seeks an appropriate $\delta_2 > 0$ such that $D(e,\tilde{e}) > \delta_2$, will be dealt with in three separate sub-cases, with each argument based on the identity from \eqref{c-sqrt-id}.
\paragraph{Case 1: $|e| \geq |\tilde{e}|$}$\,$\\ In this case it suffices to set $\delta_2 = 2 \delta_1$, since $|e| \geq |\tilde{e}| \implies |e|_{\Wc} \geq |\tilde{e}|_{\Wc}$ and thus
\[
2|e|_{\Wc}D_{\Wc}(e,\tilde{e}) \geq (|e|_{\Wc} + |\tilde{e}|_{\Wc})D_{\Wc}(e,\tilde{e}) \geq \tilde{D}_{\Wc}(e,\tilde{e})\,D_{\Wc}(e,\tilde{e}) = D(e,\tilde{e}) > \delta_2,
\]
where the first inequality relies on the case considered, the second follows from triangle inequality and the subsequent equality follows from the identity in \eqref{c-sqrt-id}.

\paragraph{Case 2: $|e| \ll |\tilde{e}|$}$\,$\\ 
The requirement that $|\tilde{e}| \gg |e|$ can be restated as $|e| < c_1$ for some constant $c_1 > 0$. It then holds that $\sqrt{c_0} < |e|_{\Wc} < \sqrt{c_1}$ and $|\tilde{e}| > \sqrt{\delta_2 - c_0}$, which leads to
\begin{equation}\label{eq-min-sep-4}
|e|_{\Wc}\,D_{\Wc}(e,\tilde{e}) > \sqrt{c_0}\left(\sqrt{\delta_2 - c_0} - \sqrt{c_1}\right).
\end{equation}
Setting the right-hand side equal to $\delta_1$ and solving for $\delta_2$, one gets
\[
\delta_2 = \left(\frac{\delta_1}{\sqrt{c_0}} + \sqrt{c_1}\right)^2 + c_0.
\]
Note that here regardless of the size of $c_1$, the lemma is to hold for $\delta_2$ large enough, so it suffices to consider $\delta_2 \gg c_1$, in which case $\delta_1 \gg c_1$, so the right-hand side in \eqref{eq-min-sep-4} is positive. 

\paragraph{Case 3: $|e| < |\tilde{e}|$ but $|e| \not\ll |\tilde{e}|$}$\,$\\
To cover this case it suffices to consider $D(e,\tilde{e}) = \tilde{\delta}_2 > \delta_2$, $|e| < |\tilde{e}|$ and $|e| \geq c_1$. This final case covers the other distinct possibility that both $|e| \gg 1$ and $|\tilde{e}| \gg 1$, but it is only true that $|\tilde{e}| > |e|$. In other words, one first fixes $\tilde{\delta}_2$ and then considers $c_1 \gg \tilde{\delta_2}$. It follows directly from the definition of the complex square root mapping that in this case, for an arbitrarily small $c_2 > 0$ one can find $c_1$ large enough such that
\[
1-c_2 < \frac{|e|_{\Wc}}{|\tilde{e}|_{\Wc}} < 1 + c_2.
\]
One can use it to conclude that
\[
|e|_{\Wc}\,D_{\Wc}(e,\tilde{e}) = \frac{|e|_{\Wc}\,D(e,\tilde{e})}{\tilde{D}_{\Wc}(e,\tilde{e})} \geq \frac{\delta_2}{2} \frac{|e|_{\Wc}}{|\tilde{e}|_{\Wc}} \geq \frac{\delta_2}{2}(1 - c_2).
\]
Setting the right-hand side equal to $\delta_1$ one obtains $\delta_2 = \frac{2\delta_1}{1-c_2}$, so again a large enough $\delta_1$ ensures that $\delta_2$ is also large enough. 
\end{proof}
}

\bibliographystyle{myalpha} 


\newcommand{\etalchar}[1]{$^{#1}$}
 \newcommand{\noop}[1]{}

\end{document}